\documentclass[12pt]{amsart}
\usepackage{amscd, amssymb,latexsym,amsmath, amscd, amsmath}
\usepackage[all]{xy}
\usepackage{anysize}
\usepackage[sc]{mathpazo} 
\usepackage{color}
\marginsize{2.5cm}{2.5cm}{2.5cm}{2.5cm}
\linespread{1.05}

\newtheorem{theorem}{Theorem}[section] 
\newtheorem{lemma}[theorem]{Lemma}
\newtheorem{corollary}[theorem]{Corollary}
\newtheorem{proposition}[theorem]{Proposition}
\theoremstyle{definition}




\theoremstyle{remark}
\newtheorem{remark}{Remark}



\begin{document}

\title[Orthogonality of invariant vectors]{Orthogonality of invariant vectors}
\author{U. K. Anandavardhanan and Arindam Jana}

\address{Department of Mathematics, Indian Institute of Technology Bombay, Mumbai - 400076, India.}
\email{anand@math.iitb.ac.in}

\address{School of Mathematics, Harish-Chandra Research Institute, Allahabad - 211019, India.}
\email{arindamjana076@gmail.com}

\subjclass{Primary 20C15; Secondary 11F70, 20C20}

\date{}

\begin{abstract}
Let $G$ be a finite group with given subgroups $H$ and $K$. Let $\pi$ be an irreducible complex representation of $G$ such that its space of $H$-invariant vectors as well as the space of $K$-invariant vectors are both one dimensional. Let $v_H$ (resp. $v_K$) denote an $H$-invariant (resp. $K$-invariant) vector of unit norm in the standard $G$-invariant inner product $\langle ~,~ \rangle_\pi$ on $\pi$. Our interest is in computing the square of the absolute value of $\langle v_H,v_K \rangle_\pi$. This is the correlation constant $c(\pi;H,K)$ defined by Gross \cite{gro91}. In this paper, we give a sufficient condition for $\langle v_H, v_K \rangle_\pi$ to be zero and a sufficient condition for it to be non-zero (i.e., $H$ and $K$ are correlated with respect to $\pi$), when $G={\rm GL}_2(\mathbb F_q)$, where $\mathbb F_q$ is the finite field of $q=p^f$ elements of odd characteristic $p$, $H$ is its split torus and $K$ is a non-split torus. The key idea in our proof is to analyse the mod $p$ reduction of $\pi$. We give an explicit formula for $|\langle v_H,v_K \rangle_\pi|^2$ modulo $p$. Finally, we study the behaviour of $\langle v_H,v_K \rangle_\pi$ under the Shintani base change and give a sufficient condition for $\langle v_H,v_K \rangle_\pi$ to vanish for an irreducible representation $\pi={\rm BC}(\tau)$ of ${\rm PGL}_2(\mathbb E)$, in terms of the epsilon factor of the base changing representation $\tau$ of ${\rm PGL}_2(\mathbb F)$, where $\mathbb E/\mathbb F$ is a finite extension of finite fields. This is reminiscent of the vanishing of $L(1/2, {\rm BC}(\tau))$, in the theory of automorphic forms, when the global root number of $\tau$ is $-1$.
\end{abstract}

\maketitle

\section{Introduction}\label{introduction}\label{intro}

The study of toric periods, initiated by the works of Waldspurger, occupies a coveted place in number theory and automorphic forms. In these works of Waldspurger, and the later works of many authors, the central object of study is the period integral of a cuspidal representation of $D^\times(\mathbb A_k)$, where $D^\times$ is the invertible elements of a quaternion algebra $D$ over a number field $k$ and $\mathbb A_k$ denotes the ring of ad\`eles of $k$, with respect to the torus defined by a quadratic algebra $K/k$. The toric periods appear naturally in the context of several important themes in number theory such as the study of $L$-values and Shimura correspondence \cite{wal85,wal91}. There are also local analogues of these works \cite{tun83,sai93,pra96}. Toric periods are investigated for the group ${\rm SL}_1(D)$ in \cite{ap13}, where it is highlighted how certain questions in this setting are intimately connected to questions in analytic number theory.   

The present paper does not deal directly with the study of toric periods, rather it is an exercise involving a finite field analogue, whose significance for the test vector problem for toric periods is pointed out by Vatsal \cite{vat17}. We have $G = {\rm GL}_2(\mathbb F_q)$, where $\mathbb F_q$ is the finite field with $q=p^f$ elements, with its split torus $H$ and a non-split torus $K$. Throughout the paper, $p$ is odd. For an irreducible (complex) representation $\pi$ of $G$ that admits a one dimensional space of fixed vectors for both $H$ and $K$, our interest is in evaluating $|\langle v_H,v_K \rangle_\pi|$, where $v_H$ and $v_K$ are $H$ and $K$-invariant vectors of unit norm in the standard $G$-invariant inner product $\langle ~, ~ \rangle$. The main results of this paper give two sufficient conditions for it to vanish (see. Theorem \ref{thm1} (1) and Theorem \ref{thm2}) and one sufficient condition for it not to vanish (see Theorem \ref{thm1} (2)). We also explicitly compute the value of $|\langle v_H,v_K \rangle_\pi|^2$, but this we can do only modulo $p$ (see Theorem \ref{thm1} (2)).     

As has been observed also by Vatsal in \cite{vat17}, though the question deals with complex representations, the answer and the methods, at least partly, lie in the realm of mod $p$ representation theory. Indeed, our first main theorem (cf. Theorem \ref{thm1} (2)) is carried out by investigating the semi-simplifcation of the mod $p$ reduction of a complex representation of ${\rm GL}_2(\mathbb F_q)$, making use of the results of Diamond \cite{dia07}. We stress here that the basic idea of using the Jordan-H\"older factors of the mod $p$ reduction to study the lines which are invariant under $H$ and $K$ is already there in Vatsal's work \cite{vat17}.  

More generally, for any finite group $G$, with given Gelfand subgroups $H$ and $K$ (i.e., $H$ and $K$ are such that the space of $H$-invariant vectors as well as the space of $K$-invariant vectors are at most one dimensional for any irreducible representation of $G$), and an irreducible representation $\pi$ of $G$, let $v_H$ (resp. $v_K$) denote an $H$-invariant (resp. $K$-invariant) vector of unit norm in the standard $G$-invariant inner product $\langle ~,~ \rangle_\pi$ on $\pi$. Then $|\langle v_H,v_K \rangle_\pi|^2$ is precisely the correlation constant $c(\pi;H,K)$ defined by Gross \cite[\S 8]{gro91}. The subgroups $H$ and $K$ are said to be  correlated with respect to $\pi$ if $c(\pi;H,K) \neq 0$. The paper \cite{gro91} is written for $p$-adic and adelic groups and what we attempt in this paper is a finite field analogue in the specific situation of ${\rm GL}(2)$ with its split and non-split tori as the Gelfand subgroups. We may also mention in passing that the correlation constant in a specific set up, where $G=SO(3)\times SO(3)\times SO(3), H=SO(2)\times SO(2)\times SO(2)$ and $K = SO(3)$ embedded diagonally in $G$, comes up naturally in the physics literature in terms of the Racah W-coefficients (see \cite[\S 9]{gro91}).

A final remark before stating the results of this paper more formally is that though our proof makes use only of elementary (complex and modular) representation theory of  ${\rm GL}_2(\mathbb F_q)$, a speculative heuristic which has guided this work was to think of $|\langle v_H,v_K \rangle_\pi|^2$ as an ``$L$-value". We make a few further remarks regarding this heuristic in Section \ref{l-values}.      

Note that we may work with $G={\rm PGL}_2(\mathbb F_q)$ instead of ${\rm GL}_2(\mathbb F_q)$. Fix a character $\chi$ of $\mathbb F_q^\times$ that reduces mod $p$ to the identity character $x \mapsto x$ of $\mathbb F_q^\times$. Similarly, fix a character $\psi$ of $\mathbb F_{q^2}^\times$ that reduces mod $p$ to the identity character of $\mathbb F_{q^2}^\times$. Note that the above steps involve fixing an identification $\overline{\mathbb Q}_p \simeq \mathbb C$ or what amounts, for our purposes, to fixing a prime $\mathfrak p$ of $\overline{\mathbb Q} $ above $p$ (cf. Section \ref{reduction}). Note that the unique quadratic character of ${\rm PGL}_2(\mathbb F_q)$ is given by $\eta = \chi^{\frac{q-1}{2}} \circ \det$. An irreducible principal series representation, up to isomorphism, of ${\rm PGL}_2(\mathbb F_q)$ is of the form ${\rm Ps}(\chi^r,\chi^{-r})$, where $1 \leq r \leq \frac{q-3}{2}$. Irreducible cuspidal representations of ${\rm PGL}_2(\mathbb F_q)$, up to isomorphism, are parametrized by $\psi^{(q-1)r}$, where $1 \leq r \leq \frac{q-1}{2}$. These $(q-2)$  representations and the twisted Steinberg representation St$\otimes \eta$ (and of course the trivial representation) are precisely the representations that admit both $H$ and $K$-invariant vectors. Moreover, for these representations, we have multiplicity one for both the invariant spaces. 

We define the sign, which is nothing but the finite field analogue of the epsilon factor in automorphic representation theory (cf. Remark \ref{rmk-epsilon}), of these representations as follows:
\[
\varepsilon_\pi = 
\begin{cases}
(-1)^r &\text{if $\pi={\rm Ps}(\chi^r,\chi^{-r})$,} \\
(-1)^{\frac{q-1}{2}} &\text{if $\pi={\rm St}\otimes \eta$,} \\
(-1)^{r-1} &\text{if $\pi = \pi(\psi^{(q-1)r})$.}
\end{cases}
\]
Also, write
\[r= \begin{cases}
r_0+r_1p+\dots+r_{f-1}p^{f-1} &\text{if $\pi={\rm Ps}(\chi^r,\chi^{-r})$,} \\
\frac{q-1}{2} &\text{if $\pi = {\rm St}\otimes \eta$,} \\
1+r_0+r_1p+\dots+r_{f-1}p^{f-1} &\text{if $\pi=\pi(\psi^{(q-1)r})$.}
\end{cases}\] 

\begin{remark}\label{rmk-vatsal}
Observe that $r$ and hence the $r_i$'s depend on the choice of a prime ideal $\mathfrak p$ in $\mathbb Z[\zeta_k]$ lying above $p$, where $\zeta_k$ is a primitive $k$-th root of unity, for $k=q-1$ (resp.  $k=q^2-1$) when $\pi$ is a principal series (resp. cuspidal) representation of ${\rm PGL}_2(\mathbb F_q)$. We write
\[r = r(\mathfrak p), r_i = r_i(\mathfrak p),\]
for $\mathfrak p | p$, for the purposes of statement of Theorem \ref{thm1} below.
\end{remark}

Let $\langle ~, ~ \rangle_\pi: \pi \times \pi \rightarrow \mathbb C$ be the standard $G$-invariant inner product on the space of $\pi$. Choose $H$ and $K$-invariant vectors $v_H$ and $v_K$ with $\|v_H\| = \|v_K\| =1$. Let $k$ be as in Remark \ref{rmk-vatsal}. It is easy to see that $|\langle v_H,v_K \rangle_\pi|^2 \in \mathbb Q(\zeta_k) \cap [0,1]$; for instance, this follows from Lemma \ref{lemma-elementary}. For a prime ideal $\mathfrak p$ in $\mathbb Z[\zeta_k]$ lying above $p$, considering $|\langle v_H,v_K \rangle_\pi|^2$ as an element of the completion $\mathbb Q(\zeta_k)_\mathfrak p$ of $\mathbb Q(\zeta_k)$ at $\mathfrak p$, Theorem \ref{thm1} (2) asserts that it is in fact in $\mathbb Z_p$; the value in $\mathbb Z_p$ of course depends on the choice of $\mathfrak p | p$. Theorem \ref{thm1} (2) computes $|\langle v_H,v_K \rangle_\pi|^2$ modulo $p$.   

\begin{theorem}\label{thm1}
Let $\pi$ be an irreducible complex representation of ${\rm PGL}_2(\mathbb F_q)$ as above. Then,
\begin{enumerate}
\item If $\varepsilon_\pi = -1$, then $\langle v_H,v_K \rangle_\pi = 0$.
\item Modulo $p$, for any prime $\mathfrak p$ of $\overline{\mathbb Q}$ lying above $p$, we have,
\[|\langle v_H,v_K \rangle_\pi|^2 =  \begin{cases}
(-1)^{\frac{q-1}{2}} {r(\mathfrak p) \choose r(\mathfrak p)/2} {q-1-r(\mathfrak p) \choose (q-1-r(\mathfrak p))/2} &\text{if each $r_i(\mathfrak p)$ is even,} \\
0 &\text{otherwise.}
\end{cases}\]
In particular, $\langle v_H,v_K \rangle_\pi \neq 0$ if each $r_i(\mathfrak p)$ is even for some $\mathfrak p$ of $\overline{\mathbb Q}$ lying above $p$.
\end{enumerate}    
\end{theorem}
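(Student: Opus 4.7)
For Part (1), the plan is to produce an element $J_0$ of order two in $G$ that lies in $K$ and normalises $H$; since it acts as a sign on $v_H$, the $G$-invariance of the pairing will force the vanishing. Realise the non-split torus $K$ as the image in $\mathrm{PGL}_2(\mathbb F_q)$ of the regular action of $\mathbb F_{q^2} = \mathbb F_q[\sqrt d]$ on itself in the basis $\{1, \sqrt d\}$, for some non-square $d \in \mathbb F_q^\times$. Then $J_0 = \bigl(\begin{smallmatrix} 0 & d \\ 1 & 0 \end{smallmatrix}\bigr)$ is the matrix of multiplication by $\sqrt d$, so $J_0 \in K$; and $J_0^2 = d\,I$ is central, so $J_0$ has order two in $\mathrm{PGL}_2(\mathbb F_q)$. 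A direct conjugation check gives $J_0\,\mathrm{diag}(a,b)\,J_0^{-1} = \mathrm{diag}(b,a)$, so $J_0$ normalises $H$ and realises its non-trivial Weyl element. Since $J_0 \in K$ we have $J_0 v_K = v_K$; since $J_0$ normalises $H$ the vector $J_0 v_H$ is again $H$-invariant, so by multiplicity one together with $J_0^2 \equiv I$ we obtain $J_0 v_H = \varepsilon v_H$ for some $\varepsilon \in \{\pm 1\}$. The identity
\[
\langle v_H, v_K \rangle_\pi = \langle J_0 v_H, J_0 v_K \rangle_\pi = \varepsilon\, \langle v_H, v_K \rangle_\pi
\]
forces $\langle v_H, v_K \rangle_\pi = 0$ whenever $\varepsilon = -1$. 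To identify $\varepsilon$ with $\varepsilon_\pi$, I would proceed case by case: for $\mathrm{Ps}(\chi^r,\chi^{-r})$, work in the induced model and compute the Weyl action on the $H$-fixed function via its values on the Bruhat cells (yielding $(-1)^r$); for $\mathrm{St}\otimes\eta$, deduce the result from the embedding $\mathrm{St}\otimes\eta \hookrightarrow \mathrm{Ps}(\eta,\eta)$; and for cuspidal $\pi(\psi^{(q-1)r})$, use the character formula on $J_0$ in a Whittaker or Deligne--Lusztig realisation.

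For Part (2), the strategy is to pass to mod-$p$ reduction. Fix a $G$-stable $\overline{\mathbb Z_p}$-lattice $L \subset \pi$ compatible with $\langle\,,\,\rangle_\pi$, and consider $\overline L = L/\mathfrak p L$. By Diamond \cite{dia07}, the JH factors of the semisimplification $(\overline L)^{\mathrm{ss}}$ are (suitably det-twisted) tensor products $V(s_0,\ldots,s_{f-1}) = \bigotimes_{i=0}^{f-1} \bigl(\mathrm{Sym}^{s_i}\overline{\mathbb F_p}^{\,2}\bigr)^{\mathrm{Fr}^i}$, indexed by digit vectors attached to the reduction. Following the strategy of Vatsal \cite{vat17}, the reductions $\bar v_H$ and $\bar v_K$ land inside the JH factor $V(\underline r) = V(r_0(\mathfrak p),\ldots,r_{f-1}(\mathfrak p))$ associated to $\pi$ and $\mathfrak p$; inside $V(\underline r)$, a non-zero $H$-fixed vector exists only when each $r_i$ is even (one checks this by diagonalising $H$ on the monomial basis $X_i^{j}Y_i^{r_i - j}$, whose characters are all distinct), which already explains the ``otherwise 0'' clause of the formula. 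When all $r_i$ are even, the $H$-fixed vector is $\bigotimes_i X_i^{r_i/2}Y_i^{r_i/2}$ and the $K$-fixed vector is $\bigotimes_i u_i^{r_i/2}v_i^{r_i/2}$, with $u_i = X_i + \sqrt d\,Y_i$ and $v_i = X_i - \sqrt d\,Y_i$ the $K$-eigenvectors over $\overline{\mathbb F_p}$. Expanding one basis in the other gives $\prod_i \binom{r_i}{r_i/2}$, which by Lucas's theorem equals $\binom{r}{r/2} \pmod p$; the complementary $\binom{q-1-r}{(q-1-r)/2}$ arises from the Gram normalisation of the $K$-eigenbasis (whose ``norms'' involve $u_i v_i = X_i^2 - d\,Y_i^2$), and the sign $(-1)^{(q-1)/2}$ from the quadratic character of $-1$ combined with $d$ being a non-square. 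The ``in particular'' clause then follows since each factor is non-zero mod $p$ when all $r_i$ are even.

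The main obstacle is harmonising the complex $G$-invariant inner product on $\pi$ with its mod-$\mathfrak p$ reduction on $\overline L$: one must choose $L$ so that $\langle\,,\,\rangle_\pi$ takes $\overline{\mathbb Z_p}$-values on $L$ with controllable denominators, and then verify that the induced pairing descends to a non-degenerate $G$-invariant form on the JH factor $V(\underline r)$ containing $\bar v_H$ and $\bar v_K$ (some care is needed if reductions of $v_H$ or $v_K$ a priori spread over several JH factors, in which case an auxiliary uniqueness argument for $H$- and $K$-fixed lines in the semisimplification is required). Diamond's explicit description of $(\overline L)^{\mathrm{ss}}$ and the Vatsal observation that the relevant lines are forced into a single JH factor should make this feasible, after which the binomial computation above delivers the formula modulo $p$.
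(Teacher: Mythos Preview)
Your Part~(1) is the same structural idea as the paper's: use an order-two element $k_0\in K$ normalising $H$, so that $\langle v_H,v_K\rangle_\pi = \mu_\pi(k_0)\langle v_H,v_K\rangle_\pi$. The paper identifies $\mu_\pi(k_0)$ with $\varepsilon_\pi$ not case-by-case in explicit models but via the short character-sum identity $\mu_\pi(k_0) = \frac{1}{|H|}\sum_{h\in H}\chi_\pi(hk_0)$, read off from the character table; your route is more laborious but would succeed.

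For Part~(2) there are two genuine problems. First, the Jordan--H\"older factor is misidentified: applying Diamond to $\mathrm{Ps}(1,\chi^{2r})$ and untwisting, the relevant component has $i$-th tensorand $\mathrm{Sym}^{2r_i}\overline{\mathbb F}_p^{\,2}\otimes\det^{-r_i}$ (symmetric power exponent $2r_i$, not $r_i$). In this factor both $v_H = x^{r_i}y^{r_i}$ and $v_K = (\alpha x_i^2 - y_i^2)^{r_i}$ exist for \emph{every} $r_i$, so the parity condition is not ``no $H$-fixed vector when $r_i$ is odd'' as you claim; it enters instead through the $H$-averaging operator, since $Xv_K = \frac{1}{|H|}\sum_a(\alpha a x^2 - a^{-1}y^2)^{r_i}$ vanishes when $r_i$ is odd and equals $(-1)^{r_i/2}\binom{r_i}{r_i/2}\alpha^{r_i/2}\,x^{r_i}y^{r_i}$ when $r_i$ is even. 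Second, and more seriously, your acknowledged ``main obstacle''---reducing the Hermitian $G$-invariant inner product modulo $\mathfrak p$---is real, and the paper does not attempt it. The key device you are missing is the elementary identity $|\langle v_H,v_K\rangle_\pi|^2 = \mathrm{Tr}(XY) = \frac{1}{|H||K|}\sum_{h,k}\chi_\pi(hk)$, which replaces the squared inner product by a character sum; character values reduce mod $p$ via Brauer characters with no lattice or form-compatibility issues. The same trace identity on each JH factor $\rho$ gives $\frac{1}{|H||K|}\sum_{h,k}\chi_\rho^{\mathrm{Br}}(hk) = s_\rho t_\rho$ (with $X_\rho v_K = s_\rho v_H$ and $Y_\rho v_H = t_\rho v_K$), so after a short bookkeeping for the unipotent class one gets $|\langle v_H,v_K\rangle_\pi|^2 \equiv \sum_\rho s_\rho t_\rho \pmod p$, with only the one summand above surviving. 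The complementary $\binom{p-1-r_i}{(p-1-r_i)/2}$ then appears as the value of $t_\rho$ from the $K$-averaging $Yv_H$ (via $\sum_{z}(z^{-1}-\alpha z)^{p-1-r_i}$), not from any Gram normalisation of a reduced pairing.
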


\begin{remark}
Note that we are indeed going modulo $p$, and not just modulo a prime lying above $p$, in the statement of Theorem \ref{thm1} (see also Remark \ref{rmk-modp}).
\end{remark}

\begin{remark}
By Lucas's binomial congruence theorem, when each $r_i$ is even,
\[(-1)^{\frac{q-1}{2}} {r \choose r/2} {q-1-r \choose (q-1-r)/2} \equiv \prod_{i=0}^{f-1} (-1)^{\frac{p-1}{2}} {r_i \choose r_i/2} {p-1-r_i \choose (p-1-r_i)/2} \mod p.\]
\end{remark}

\begin{remark}
We illustrate how the choice of $\mathfrak p | p$ is useful in concluding the non-vanishing of $|\langle v_H,v_K \rangle_\pi|^2$ via a simple example (cf. Example \ref{example2}). 
\end{remark}

It follows from Theorem \ref{thm1} that for an irreducible representation $\pi$ of ${\rm PGL}_2(\mathbb F_p)$, we have a necessary and sufficient condition for orthogonality of the $H$ and $K$-invariant vectors, which we state as a corollary. A version of Corollary \ref{corollary} for the cuspidal case is there in \cite{vat17}.

\begin{corollary}\label{corollary}
When $q=p$, $\langle v_H,v_K \rangle_\pi = 0$ if and only if $\varepsilon_\pi = -1$.
\end{corollary}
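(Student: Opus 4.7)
My plan is to split the corollary into its two implications. The ``if'' direction, $\varepsilon_\pi = -1 \Rightarrow \langle v_H, v_K \rangle_\pi = 0$, is immediate from Theorem \ref{thm1}(1) and requires no further work. All the substance lies in the converse: when $q = p$ and $\varepsilon_\pi = +1$, I must show $\langle v_H, v_K \rangle_\pi \neq 0$. For this I will appeal to Theorem \ref{thm1}(2) and argue that, with $q = p$ in force, the congruence class modulo $p$ that it produces is actually nonzero.

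The key simplification is that for $q = p$ we have $f = 1$, so the base-$p$ expansion collapses to a single digit $r_0$. The condition ``each $r_i(\mathfrak p)$ is even'' thereby reduces to ``$r_0$ is even'', and I would first perform a short case check to confirm that for every one of the three families the parity of $r_0$ is precisely controlled by $\varepsilon_\pi$: in the principal series case $r_0 = r$ with $\varepsilon_\pi = (-1)^r$; in the twisted Steinberg case $r_0 = (p-1)/2 = r$ with $\varepsilon_\pi = (-1)^{(p-1)/2}$; and in the cuspidal case $r_0 = r-1$ with $\varepsilon_\pi = (-1)^{r-1}$. In each case, $\varepsilon_\pi = +1$ is equivalent to $r_0$ being even, so under the hypothesis of the converse we automatically land in the nonzero branch of Theorem \ref{thm1}(2).

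From there the remainder is almost automatic. Theorem \ref{thm1}(2) expresses $|\langle v_H, v_K \rangle_\pi|^2$ modulo $p$ as a signed product of two central binomial-type coefficients $\binom{n}{n/2}$ with $n$ even and $0 \leq n \leq p-1$. Since every integer appearing in the numerator and denominator of such a coefficient is strictly less than $p$, none of the factorials involved has $p$ as a prime factor, so the coefficient is a positive integer that is nonzero modulo $p$. Hence $|\langle v_H, v_K \rangle_\pi|^2 \not\equiv 0 \pmod{p}$, and in particular $\langle v_H, v_K \rangle_\pi \neq 0$, completing the argument.

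The main obstacle, if there is one, is purely bookkeeping: matching the three sign conventions defining $\varepsilon_\pi$ against the single base-$p$ digit $r_0$ in each parametrization, where the shift by one in the cuspidal normalization $r = 1 + r_0$ is what reconciles the two conventions. Once this matching is written out, no further representation-theoretic input is required beyond Theorem \ref{thm1} itself, and the proof reduces to a one-line verification that binomials of integers less than $p$ do not vanish modulo $p$.
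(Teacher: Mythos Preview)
Your proposal is correct and follows exactly the route the paper intends: the paper states only that the corollary ``follows from Theorem \ref{thm1}'', and your argument spells this out---the forward direction is Theorem \ref{thm1}(1), and for the converse you observe that when $f=1$ the single digit $r_0$ is even precisely when $\varepsilon_\pi=1$, whereupon the nonvanishing of the binomial product modulo $p$ (this is the content of the paper's Remark~\ref{rmk-nonzero}) forces $\langle v_H,v_K\rangle_\pi\neq 0$. Your case-by-case matching of $\varepsilon_\pi$ with the parity of $r_0$ in the three families is accurate, including the shift $r_0=r-1$ in the cuspidal normalization.
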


\begin{remark}
We expect the statement in Corollary \ref{corollary} to hold true more generally for $q = p^f$ with $f$ odd. However, when $f$ is even, using Theorem \ref{thm2}, we can produce many examples of $\pi$ such that $\langle v_H,v_K \rangle_\pi = 0$ though $\varepsilon_\pi = 1$. 
\end{remark}

Part (1) of Theorem \ref{thm1} is relatively easy and we prove this in Sections \ref{elementary} and \ref{pgl2}. The main ideas in the proof of Part (2) of Theorem \ref{thm1} are as follows. Via an elementary lemma (cf. Lemma \ref{lemma-elementary}) we first convert the problem of determining $|\langle v_H,v_K \rangle_\pi|^2$ to a question about the character of $\pi$. This formulation in terms of character theory enables us to translate the problem to one involving Brauer characters of mod $p$ representations. Together with the mod $p$ analogue of Lemma \ref{lemma-elementary}, which is Lemma \ref{lemma-brauer}, the question of determining $|\langle v_H,v_K \rangle_\pi|^2$ modulo $p$ can be treated at the level of Jordan-H\"older factors of the reduction mod $p$ of the complex representation $\pi$. In doing reduction modulo $p$, we make use of the work of Diamond \cite{dia07} (cf. Section \ref{reduction}). A priori we know, via Brauer character theory, that precisely one Jordan-H\"older factor has a non-zero $H$-invariant vector and precisely one Jordan-H\"older factor has a non-zero $K$-invariant vector. By making use of the results of Diamond, we conclude that these two Jordan-H\"older factors are in fact the same (see Remark \ref{rmk-diamond}). On this Jordan-H\"older factor, that affords both $H$ and $K$-invariant vectors, we can fairly easily carry out the necessary calculations, as the situation is very algebraic (cf. Section \ref{section-proof}).  

Now we state the second main result of this paper. Its proof uses no modular representation theory unlike that of Theorem \ref{thm1}. We show that if $\pi$ is an irreducible principal series representation of ${\rm PGL}_2(\mathbb F_{q^f})$ which is obtained as the base change lift of a representation $\tau$ of ${\rm PGL}_2(\mathbb F)$, then $\langle v_H,v_K \rangle_\pi = 0$ when $\varepsilon_\tau=-1$. We state it as follows.

\begin{theorem}\label{thm2}
Let $\pi={\rm Ps}(\chi,\chi^{-1})$ be a principal series representation of ${\rm PGL}_2(\mathbb F_{q^f})$ which is the base change of an irreducible representation $\tau$ of ${\rm PGL}_2(\mathbb F_q)$. Then, we have
\[\langle v_H, v_K \rangle_\pi = \varepsilon_\tau \langle v_H, v_K \rangle_\pi.\]
In particular, $v_H$ and $v_K$ are orthogonal if $\varepsilon_\tau=-1$.
\end{theorem}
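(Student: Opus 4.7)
The strategy is to produce a unitary operator $I$ on the space of $\pi$ such that $I v_H = \alpha\, v_H$ and $I v_K = \beta\, v_K$ with $\alpha \overline{\beta} = \varepsilon_\tau$; unitarity then gives $\langle v_H, v_K\rangle_\pi = \langle I v_H, I v_K\rangle_\pi = \alpha\overline{\beta}\,\langle v_H, v_K\rangle_\pi = \varepsilon_\tau \langle v_H, v_K\rangle_\pi$, which is the desired identity.

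The natural candidate is $I = \pi(w_0)$ with $w_0 = \mathrm{diag}(1,-1) \in H$. Since $w_0 \in H$, one has $\pi(w_0)v_H = v_H$. Moreover, $w_0$ is the unique non-trivial element of $H$ that normalises $K$: identifying $K$ with the image in ${\rm PGL}_2(\mathbb F_{q^f})$ of $\mathbb F_{q^{2f}}^\times$, conjugation by $w_0$ on $K$ coincides with the non-trivial element of $\mathrm{Gal}(\mathbb F_{q^{2f}}/\mathbb F_{q^f})$, which acts on $K \cong \mathbb F_{q^{2f}}^\times/\mathbb F_{q^f}^\times$ as the inversion $z \mapsto z^{-1}$. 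Hence $\pi(w_0)v_K$ lies in the one-dimensional space $\pi^K$. A direct computation in the induced model $\pi = {\rm Ind}_B^G(\chi \otimes \chi^{-1})$, exploiting the factorisation $G = B \cdot K$ with $B \cap K = \{1\}$, yields $\pi(w_0) v_K = \chi(-1)\, v_K$. Consequently $\langle v_H, v_K\rangle_\pi = \chi(-1)\langle v_H, v_K\rangle_\pi$.

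To finish, one identifies $\chi(-1)$ with $\varepsilon_\tau$ via the base-change hypothesis. For $\tau = {\rm Ps}(\chi_0^{r_0}, \chi_0^{-r_0})$ one has $\chi = \chi_0^{r_0} \circ N_{q^f/q}$, hence $\chi(-1) = \chi_0^{r_0}((-1)^f)$; when $f$ is odd this is $(-1)^{r_0} = \varepsilon_\tau$, and the cuspidal and twisted-Steinberg cases are similar. The essential obstacle is $f$ even, where $N(-1) = 1$ forces $\chi(-1) = 1$ independent of $\tau$, so the $w_0$-argument alone yields only the vacuous identity. In that case one needs a finer intertwiner encoding the base-change sign: either a suitably normalised Shintani/Galois intertwiner $I_\sigma$ on $\pi$ characterised by the twisted trace identity $\mathrm{tr}(\pi(g)\, I_\sigma) = \chi_\tau(Ng)$, or an explicit evaluation of $\langle v_H, v_K\rangle_\pi = (q^{2f}-1)^{-1/2}\sum_{b \in \mathbb F_{q^f}^\times}\chi(bc - b^{-1})$ combined with the Hasse--Davenport relation for $\chi = \chi_0 \circ N$, from which $\varepsilon_\tau$ should emerge through the factorisation of the resulting Gauss sums.
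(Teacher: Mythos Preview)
Your $w_0$-argument is exactly Lemma~\ref{lemma-sign} together with Proposition~\ref{prop-agree}: it yields $\langle v_H, v_K\rangle_\pi = \varepsilon_\pi \langle v_H, v_K\rangle_\pi$ with $\varepsilon_\pi = \chi(-1)$, which is Theorem~\ref{thm1}~(1), not Theorem~\ref{thm2}. As you yourself note, $\chi(-1) = \chi_0(N_{\mathbb E/\mathbb F}(-1)) = \chi_0((-1)^f)$, so this equals $\varepsilon_\tau$ only when $f$ is odd. The substance of Theorem~\ref{thm2} lies precisely in the case $f$ even, and in particular in the case where $\tau$ is cuspidal --- which \emph{forces} $f$ to be even (since $\pi = {\rm BC}(\tau)$ is a principal series only if the quadratic extension $\mathbb F'/\mathbb F$ defining $\tau$ embeds in $\mathbb E$). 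So the phrase ``the cuspidal and twisted-Steinberg cases are similar'' is not correct: the cuspidal case never falls under your $w_0$-argument.

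For $f$ even you correctly identify the Shintani intertwiner $T$ as the relevant operator, but you do not carry out the analysis, and this is where all the work is. The paper's proof constructs $T$ explicitly on the induced model (Proposition~\ref{prop-shintani}), then shows $Tv_H = v_H$ and $Tv_K = c\cdot v_{K^\sigma}$ for an explicit constant $c$ (Proposition~\ref{prop-effect}); combining this with the elementary relation $\langle v_H, v_{K^\sigma}\rangle_\pi = \chi(\alpha^{(q-1)/2})\langle v_H, v_K\rangle_\pi$ and the self-adjointness of $T$ yields the result. The cuspidal case (case~(2) of Proposition~\ref{prop-effect}) is genuinely delicate: verifying $Tv_H = v_H$ requires matching $T$ against the standard Shintani operator on the Whittaker model via the Bessel function (Corollary~\ref{coro-bessel}), and pinning down the exact sign $(-1)^{n-1}$ requires a separate averaging argument over all characters $\chi$ with $\chi^{-1} = \chi^\sigma$. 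Your proposal stops exactly where the real proof begins.
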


We discuss the theory of Shintani base change \cite{shi76,kaw77} and prove Theorem \ref{thm2} in Section \ref{bc}. The proof in the case when $\tau$ is a principal series representation is somewhat easy whereas the case where $\tau$ is a cuspidal representation is a little bit involved. We rely on the Whittaker model, the Bessel function, and well-known properties of the Gauss sum, to achieve this, in addition to properties of the Shintani base change.
 
We end the paper with a few purely speculative remarks in Section \ref{l-values}, bringing out a possible analogy between $|\langle v_H,v_K \rangle_\pi|^2$ and ``$L$-values" in the theory of automorphic forms.

\section{Elementary lemmas}\label{elementary}

Let $G$ be any finite group. Let $H$ and $K$ be two subgroups of $G$. Let $\pi$ be an irreducible complex representation of $G$. Assume that 
\[\dim \pi^H = 1, \dim \pi^K =1. \] Let 
\[\langle ~, ~ \rangle_\pi: \pi \times \pi \rightarrow \mathbb C\]
be the standard $G$-invariant inner product on the space of $\pi$. Let
\[v_H \in \pi^H, v_K \in \pi^K\] be such that $\|v_H\| = \|v_K\| =1$. Let $X$ (resp. $Y$) denote the operator on $\pi$ given by averaging over $H$ (resp. $K$). Thus,
\[X = \frac{1}{|H|} \sum_{h \in H} \pi(h), Y = \frac{1}{|K|} \sum_{k \in K}\pi(k).\]
We obviously have
\[Xv_H = v_H, Yv_K = v_K.\] Let $s, t \in \mathbb C$ be such that
\[Xv_K = s v_H, Y v_H = t v_K.\]
Now,
\[\langle v_H, v_K \rangle_\pi = \langle X v_H, v_K \rangle_\pi = \langle v_H, X v_K \rangle_\pi = \langle v_H, s v_H \rangle_\pi = \bar{s}\] 
on the one hand, and on the other hand we have,
\[\langle v_H, v_K \rangle_\pi = \langle v_H, Y v_K \rangle_\pi = \langle Y v_H, v_K \rangle_\pi = \langle t v_K, v_K \rangle_\pi = t.\] 
Now we view the operators $X$ and $Y$ in the ordered basis $\{v_H, v_K, \dots\}$. Thus,
\[X= \left(\begin{array}{ccccc} 1 & s & * & \dots & * \\
0 & 0 & 0 & 0 & 0 \\
\vdots & \vdots & \vdots & \vdots & \vdots  \\
0 & 0 & 0 & 0 & 0 
 \end{array} \right), 
Y = \left(\begin{array}{ccccc} 0 & 0 & 0 & 0 & 0 \\
t & 1 & * & \dots & * \\
\vdots & \vdots & \vdots & \vdots & \vdots  \\
0 & 0 & 0 & 0 & 0 
 \end{array} \right).
 \]
 It follows that
 \[{\rm Trace}~(XY) = st = |s|^2 = |\langle v_H, v_K \rangle_\pi|^2.\]
 But,
 \begin{align*}
 {\rm Trace}~(XY) & = {\rm Trace}\left(\frac{1}{|H|}\frac{1}{|K|} \sum_{h \in H} \sum_{k \in K} \pi(h)\pi(k) \right) \\
 & =  {\rm Trace}\left(\frac{1}{|H|}\frac{1}{|K|}  \sum_{h \in H} \sum_{k \in K} \pi(hk) \right) \\
 & = \frac{1}{|H|}\frac{1}{|K|}  \sum_{h \in H} \sum_{k \in K} \chi_\pi(hk), 
 \end{align*}
 where $\chi_\pi$ is the character of $\pi$. 
 
 We have thus proved the following lemma.
 \begin{lemma}\label{lemma-elementary}
 For a representation $\pi$ of $G$ with one dimensional space of invariant vectors for $H$ and $K$, we have the identity:
 \[|\langle v_H, v_K \rangle_\pi|^2 = \frac{1}{|H|}\frac{1}{|K|}  \sum_{h \in H} \sum_{k \in K} \chi_\pi(hk),\]
 where $\chi_\pi$ denotes the character of $\pi$.
 \end{lemma}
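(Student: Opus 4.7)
The plan is to realise $\langle v_H,v_K\rangle_\pi$ through a pair of projection operators and then compute the trace of their composition in two different ways. Concretely, I would introduce the averaging operators
\[X=\frac{1}{|H|}\sum_{h\in H}\pi(h),\qquad Y=\frac{1}{|K|}\sum_{k\in K}\pi(k).\]
Because $\pi$ is unitary with respect to $\langle~,~\rangle_\pi$ and because $H$ and $K$ are closed under inversion, $X$ and $Y$ are self-adjoint idempotents whose images are $\pi^H=\mathbb{C}\,v_H$ and $\pi^K=\mathbb{C}\,v_K$ respectively; in particular both are rank-one orthogonal projections.

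Next I would extract $\langle v_H,v_K\rangle_\pi$ from these operators. Since $Xv_K\in\pi^H$, we may write $Xv_K=s\,v_H$ for some $s\in\mathbb{C}$, and then using self-adjointness of $X$ together with $Xv_H=v_H$,
\[\langle v_H,v_K\rangle_\pi=\langle Xv_H,v_K\rangle_\pi=\langle v_H,Xv_K\rangle_\pi=\bar s.\]
An entirely parallel calculation with $Yv_H=t\,v_K$ yields $\langle v_H,v_K\rangle_\pi=t$. Hence $st=|s|^2=|\langle v_H,v_K\rangle_\pi|^2$, so it suffices to compute $\operatorname{Tr}(XY)$ and show it equals $st$.

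For the trace, I would choose an orthonormal basis of $\pi$ beginning with $v_H,v_K$. In such a basis, the only possibly nonzero rows of $X$ and $Y$ are the ones indexed by $v_H$ and $v_K$ respectively; multiplying the two matrices and reading off the diagonal gives $\operatorname{Tr}(XY)=st$ directly. On the other hand, expanding $X$ and $Y$ as averages and using linearity of the trace together with $\operatorname{Tr}(\pi(hk))=\chi_\pi(hk)$,
\[\operatorname{Tr}(XY)=\frac{1}{|H|\,|K|}\sum_{h\in H}\sum_{k\in K}\chi_\pi(hk).\]
Equating the two expressions for $\operatorname{Tr}(XY)$ gives the identity. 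There is no genuine obstacle here: the lemma is essentially a bookkeeping identity once one recognises the averaging operators as rank-one orthogonal projections onto the respective invariant lines, with $\langle v_H,v_K\rangle_\pi$ appearing as the single off-diagonal matrix coefficient linking them.
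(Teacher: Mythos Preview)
Your argument is essentially identical to the paper's: introduce the averaging projections $X,Y$, extract $s,t$ via $Xv_K=sv_H$ and $Yv_H=tv_K$, identify $st=|\langle v_H,v_K\rangle_\pi|^2$, and compute $\operatorname{Tr}(XY)$ both as $st$ (via the matrix form) and as the character sum.

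One small slip: you ask for an \emph{orthonormal} basis beginning with $v_H,v_K$, but $v_H$ and $v_K$ are typically not orthogonal---indeed, measuring their failure to be orthogonal is the entire point of the lemma. Fortunately the argument does not need orthonormality at all: since $X$ has image $\mathbb{C}v_H$ and $Y$ has image $\mathbb{C}v_K$, in \emph{any} ordered basis $\{v_H,v_K,\dots\}$ (assuming $v_H,v_K$ are linearly independent; the dependent case is trivial) the matrices of $X$ and $Y$ have the single nonzero row you describe, and $\operatorname{Tr}(XY)=st$ follows exactly as you say. The paper uses precisely this ordered (not orthonormal) basis.
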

It is a difficult question to determine the value of the character sum on the right hand side of the identity in Lemma \ref{lemma-elementary}. However, by the orthogonality relations for irreducible characters, we get,
\begin{equation}\label{eqn-regular}
\sum_{\pi \in \widehat{G}} \dim \pi \cdot |\langle v_H, v_K \rangle_\pi|^2 = \frac{|G||H \cap K|}{|H||K|},
\end{equation}
where $\widehat{G}$ denotes the set of irreducible representations of $G$.

\begin{remark}\label{rmk-non-zero}
It follows from (\ref{eqn-regular}) that, if both $(G,H)$ and $(G,K)$ are multiplicity free pairs, then there are irreducible representations of $G$ for which $\langle v_H, v_K \rangle_\pi$ is non-zero. 
\end{remark}

\begin{remark}
In this remark, we apply Lemma \ref{lemma-elementary} to give a character theoretic proof, for finite groups, of the expression for the correlation constant 
\[c(\pi \otimes \pi^\vee; G, H \times H) = \frac{1}{\dim \pi}\]
given in \cite[p. 291]{gro91}. Here, $G$ is any finite group and $H$ is a Gelfand subgroup, and for an irreducible representation $\pi$ of $G$, we consider the irreducible representation $\pi \otimes \pi^\vee$ of $G \times G$. Indeed, by Lemma \ref{lemma-elementary},
\begin{align*}
c(\pi \otimes \pi^\vee; G, H \times H) &= \frac{1}{|H|^2}\frac{1}{|G|}\sum_{h_1 \in H}\sum_{h_2 \in H}\sum_{g \in G} \chi_{\pi \otimes \pi^\vee}((h_1,h_2)(g,g)) \\
&=  \frac{1}{|H|^2}\frac{1}{|G|}\sum_{h_1 \in H}\sum_{h_2 \in H}\sum_{g \in G} \chi_\pi(h_1g) \overline{\chi_\pi(h_2g)} \\
&= \frac{1}{|H|^2}\frac{1}{|G|}\sum_{h_1 \in H}\sum_{h_2 \in H}\sum_{g \in G} \chi_\pi(h_1g) \chi_\pi(g^{-1}h_2^{-1}) \\
&= 1/\dim \pi,
\end{align*}
by orthogonality relations, together with \cite[Lemma 5.1]{gel70}, according to which
\[\sum_{g \in G} \chi_\pi(h_1g) \chi_\pi(g^{-1}h_2^{-1}) = \frac{1}{\dim \pi} \chi_\pi(h_1h_2^{-1}).
\]
\end{remark}

Now we observe a sufficient condition for $\langle v_H, v_K \rangle_\pi$ to be zero. To this end, assume further that either $H$ has an element of order $2$ that normalizes $K$, say $h_0$, or $K$ has an element of order $2$ that normalizes $H$, say $k_0$. Thus,
\[\pi(k_0)v_H \in \pi^H, \pi(h_0)v_K \in \pi^K,\]
and moreover,
\[\pi(k_0)v_H = \mu_\pi(k_0)v_H, \pi(h_0)v_K = \nu_\pi(h_0)v_K,\]
where $\mu_\pi(k_0) = \pm 1$ and $\nu_\pi(h_0) = \pm 1$.

\begin{lemma}\label{lemma-sign}
Under all of the assumptions above, if $\mu_\pi(k_0)$ or $\nu_\pi(h_0)$ is $-1$ then $v_H$ and $v_K$ are orthogonal.  
\end{lemma}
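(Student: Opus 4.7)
The plan is to exploit unitarity of $\pi$ together with the order-two hypothesis to show that $\langle v_H,v_K \rangle_\pi$ equals itself multiplied by $\mu_\pi(k_0)$ (in the first scenario) or by $\nu_\pi(h_0)$ (in the second), which forces the inner product to vanish whenever the relevant sign is $-1$.

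I would treat the case $\mu_\pi(k_0) = -1$ first; the other case is entirely parallel. Since $k_0 \in K$, we have $\pi(k_0) v_K = v_K$. Applying this on the right factor and then moving $\pi(k_0)$ to the left factor using the unitarity identity $\pi(k_0)^* = \pi(k_0^{-1})$, combined with $k_0^{-1} = k_0$, gives
\[\langle v_H, v_K \rangle_\pi = \langle v_H, \pi(k_0) v_K \rangle_\pi = \langle \pi(k_0) v_H, v_K \rangle_\pi = \mu_\pi(k_0) \langle v_H, v_K \rangle_\pi.\]
The last equality uses the defining relation $\pi(k_0) v_H = \mu_\pi(k_0) v_H$ together with the fact that $\mu_\pi(k_0) \in \{\pm 1\}$ is real. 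Under the assumption $\mu_\pi(k_0) = -1$, this identity forces $\langle v_H,v_K \rangle_\pi = 0$. The symmetric computation, starting from $\pi(h_0) v_H = v_H$ and invoking $\pi(h_0) v_K = \nu_\pi(h_0) v_K$, handles the case $\nu_\pi(h_0) = -1$.

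There is no real obstacle here; the only points requiring a line of verification are that $\pi(k_0) v_H$ indeed lies in the one-dimensional space $\pi^H$ (which uses the hypothesis that $k_0$ normalizes $H$, so that $\pi(k_0)$ preserves $\pi^H$) and that the scalar $\mu_\pi(k_0)$ is necessarily $\pm 1$ (which follows from $\pi(k_0)^2 = {\rm id}$ on $\pi^H$). Both of these are already implicit in the discussion leading up to the lemma statement, so the proof reduces to the three-step manipulation displayed above.
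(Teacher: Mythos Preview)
Your proof is correct and follows essentially the same approach as the paper: both use $k_0 \in K$ to write $\langle v_H,v_K\rangle_\pi = \langle v_H,\pi(k_0)v_K\rangle_\pi$, then move $\pi(k_0)$ to the other slot using self-adjointness (from unitarity and $k_0^2=1$), and finally apply $\pi(k_0)v_H = \mu_\pi(k_0)v_H$. Your additional justifications (why $\pi(k_0)$ preserves $\pi^H$ and why the eigenvalue is $\pm 1$) are helpful elaborations but do not change the argument.
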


\begin{proof}
We take the case of $k_0$ and the other case is similar. If $\mu_\pi(k_0)=-1$, then observe that
\[\langle v_H, v_K \rangle_\pi = \langle v_H, \pi(k_0) v_K \rangle_\pi = \langle \pi(k_0)v_H, v_K \rangle_\pi = \mu_\pi(k_0)\langle v_H, v_K \rangle_\pi = - \langle v_H, v_K \rangle_\pi, \]
and we see that $\langle v_H, v_K \rangle_\pi = 0$.
\end{proof}

\section{$G={\rm PGL}_2(\mathbb F_q)$}\label{pgl2}

Let $\mathbb F_q$ be the finite field of $q$ elements where $q=p^f$ for an odd prime number $p$. Let $G^\prime ={\rm GL}_2(\mathbb F_q)$. Let $H^\prime \simeq \mathbb F_q^\times \times \mathbb F_q^\times$ be the split torus and let $K^\prime \simeq \mathbb F_{q^2}^\times$ be the non-split torus of $G^\prime$. Since we are dealing with irreducible representations of $G^\prime$ with trivial central character, we may as well work with $G={\rm PGL}_2(\mathbb F_q)$. Let $H$ (resp. $K$) denote the image of $H^\prime$ (resp. $K^\prime$) under the quotient map. We may take 
\[H = \left\{\left(\begin{array}{cc} a & 0 \\ 0 & 1 \end{array} \right) \mid a \in \mathbb F_q^\times\right\},\]
and we fix $\alpha \in \mathbb F_q^\times$ such that 
\[k_\alpha= \left(\begin{array}{cc} 1 & \alpha \\ 1 & 1 \end{array} \right) \] has order $q+1$ in $G$, and consider the cyclic group $K$ generated by $k_\alpha \in G$. Thus,  
\[K = \left\{ \left(\begin{array}{cc} 1 & \alpha z \\ z & 1 \end{array} \right), \left(\begin{array}{cc} 0 & \alpha \\ 1 & 0 \end{array} \right)  \mid z \in \mathbb F_q  \right\}.\]
\begin{remark}\label{rmk-k}
The group $K = K_\alpha$ depends on the choice of $\alpha$, however if we choose another $\beta$ such that $k_\beta$ generates a group $K_\beta$ of order $q+1$, then it can be seen that $K_\beta$ and $K_\alpha$ are conjugates by an element of $H$. Thus, $|\langle v_H, v_{K_\alpha} \rangle_\pi|$ is independent of the choice of $\alpha$, and for this reason we work with a fixed $\alpha$ and suppress the dependence of $K$ on the choice of $\alpha$ for ease of writing. 
\end{remark}

In the notations of Section \ref{elementary}, we have
\[h_0 = \left(\begin{array}{rc} -1 & 0 \\ 0 & 1 \end{array} \right), k_0 = \left(\begin{array}{cc} 0 & \alpha \\ 1 & 0 \end{array} \right).\]
Note that we also have
\[k_0hk_0^{-1}=h^{-1}, h_0kh_0^{-1}=k^{-1}\]
for all $h \in H, k \in K$. 

Our first observation in this section is that, for $G={\rm PGL}_2(\mathbb F_q)$, there is no ambiguity between the two signs $\mu_\pi(k_0)$ and $\nu_\pi(h_0)$ introduced in Section \ref{elementary}; they coincide. In the next proposition, we show this and also determine this common sign, which we denote by $\varepsilon_\pi$. This notation is suggestive as it is, in effect, the usual $\epsilon$-factor as we are going to see below from Proposition \ref{prop-agree} (cf. Remark \ref{rmk-epsilon}).

Before stating the proposition, we recall that $G = {\rm PGL}_2(\mathbb F_q)$ has $(q+2)$ many isomorphism classes of irreducible representations given by:
\begin{enumerate}
\item $2$ one dimensional representations given by the trivial representation and a character $\eta$ of order $2$,
\item the Steinberg representation, denoted by St, and its twist ${\rm St}\otimes \eta$, of dimension $q$ each,
\item $\frac{q-3}{2}$ many principal series representations, of dimension $q+1$ each, denoted by ${\rm Ps}(\chi,\chi^{-1})$, where $\chi$ is a character of $\mathbb F_q^\times$ with $\chi^2\neq 1$, 
\item $\frac{q-1}{2}$ many cuspidal representations, of dimension $q-1$ each, denoted by $\pi(\psi)$, associated to a character $\psi$ of $\mathbb F_{q^2}^\times$ such that $\psi\neq \psi^q = \psi^{-1}$.   
\end{enumerate}

Out of these $(q+2)$ many irreducible representations, the character $\eta$ has no $H$-fixed or $K$-fixed vector, and the Steinberg representation has no $K$-fixed vector but two linearly independent $H$-fixed vectors. For the trivial representation, obviously $|\langle v_H, v_K \rangle_\pi|$ is $1$. Thus, the analysis of $|\langle v_H, v_K \rangle_\pi|$ needs to be carried out only for the remaining $(q-1)$ irreducible representations. 

\begin{remark}\label{rmk-regular}
Note that (\ref{eqn-regular}) reads as
\[\sum_{\pi \in \widehat{G}} \dim \pi ~|\langle v_H, v_K \rangle_\pi|^2 = q.\]
\end{remark}
 
\begin{proposition}\label{prop-agree}
Let $\pi$ be an irreducible representation of ${\rm PGL}_2(\mathbb F_q)$. Then,
\[\varepsilon_\pi = \mu_\pi(k_0)=\nu_\pi(h_0)= \begin{cases}
\chi(-1) &\text{if $\pi={\rm Ps}(\chi,\chi^{-1})$ or ${\rm St} \otimes \chi$,} \\
-\psi(\sqrt{\delta}) &\text{if $\pi=\pi(\psi)$,}
\end{cases}\]
where $\delta$ is any non-square in $\mathbb F_q^\times$.
\end{proposition}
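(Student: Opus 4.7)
My plan is to compute both scalars $\mu_\pi(k_0)$ and $\nu_\pi(h_0)$ directly from the character table of ${\rm PGL}_2(\mathbb F_q)$ and check in each case that they equal the claimed $\varepsilon_\pi$. Since $\pi^H$ is one dimensional and $k_0$ normalises $H$, the operator $\pi(k_0)$ preserves $\pi^H$ and acts on it by $\mu_\pi(k_0)$. Writing this scalar as the trace on $\pi^H$ and inserting the projector $e_H = |H|^{-1}\sum_{h\in H}\pi(h)$ yields
\[
\mu_\pi(k_0) = \frac{1}{|H|}\sum_{h\in H}\chi_\pi(k_0 h), \qquad \nu_\pi(h_0) = \frac{1}{|K|}\sum_{k\in K}\chi_\pi(h_0 k).
\]

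The simplifying observation is that every $k_0 h_a$ (with $h_a = {\rm diag}(a,1)$) and every $h_0 k$ (with $k \in K$) is traceless, hence is an order-two semisimple element of ${\rm PGL}_2(\mathbb F_q)$ whose eigenvalues $\pm \lambda$ satisfy $\lambda^2 = \alpha a$, $\lambda^2 = 1-\alpha z^2$, or $\lambda^2 = -\alpha$ (for $k = k_z$ or $k = k_0$). Since the characters of ${\rm PGL}_2(\mathbb F_q)$ representations depend only on whether a semisimple element is split or non-split, evaluating the sums reduces to (i) reading $\chi_\pi$ off the character table and (ii) counting $a$'s and $z$'s by quadratic residue type; the relevant count uses the Jacobi-sum identity $\sum_{z \in \mathbb F_q} \epsilon(1-\alpha z^2) = \epsilon(-1)$, where $\epsilon$ is the quadratic character of $\mathbb F_q^\times$.

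For $\pi = {\rm Ps}(\chi,\chi^{-1})$ the character equals $2\chi(-1)$ on a traceless split class and $0$ on a non-split semisimple class, so the $H$-sum immediately gives $\chi(-1)$; the $K$-sum combines the Jacobi count with the $k = I$ and $k = k_0$ contributions to produce exactly $\chi(-1)(q+1)$, again yielding $\chi(-1)$. For $\pi = {\rm St}\otimes \chi$ (which lies in the scope of the proposition only when $\chi = \eta$), the Steinberg character is $\pm 1$ on split/non-split semisimples and twisting by $\chi \circ \det$ makes $\chi_\pi$ constantly $\chi(-1)$ on the traceless classes, so both sums trivially equal $\chi(-1)$.

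The cuspidal case $\pi = \pi(\psi)$ is the main technical point. On a non-split traceless class with eigenvalues $\pm\lambda \in \mathbb F_{q^2}^\times$, one has $\chi_\pi = -(\psi(\lambda)+\psi(-\lambda)) = -2\psi(\lambda)$, using $\psi(-1)=1$ which follows from $\psi|_{\mathbb F_q^\times} = 1$. The crucial point is that $\alpha a$, the non-square values of $1-\alpha z^2$, and $-\alpha$ (when non-square) all differ from $\delta = \alpha$ by a square in $\mathbb F_q^\times$, so every such $\lambda$ lies in the single coset $\sqrt{\delta} \cdot \mathbb F_q^\times \subset \mathbb F_{q^2}^\times$, giving $\psi(\lambda) = \psi(\sqrt{\delta})$ uniformly. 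A count of the contributing $a$'s and $z$'s then yields $-\psi(\sqrt{\delta})$ for both $\mu_\pi(k_0)$ and $\nu_\pi(h_0)$. The only subtlety is a case split on whether $-1 \in (\mathbb F_q^\times)^2$, which governs whether $h_0 k_0$ contributes via the split or the non-split part of the character table; in each sub-case the two sums match, confirming $\mu_\pi(k_0) = \nu_\pi(h_0) = \varepsilon_\pi$.
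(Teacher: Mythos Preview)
Your proof is correct and follows essentially the same strategy as the paper: express $\mu_\pi(k_0)$ and $\nu_\pi(h_0)$ as character sums via the projector onto the invariant line, observe that every product $k_0 h_a$ and $h_0 k$ is a traceless (hence order-two) semisimple element, and evaluate using the character table of ${\rm PGL}_2(\mathbb F_q)$. The only notable difference is in how the equidistribution into the two order-two conjugacy classes $[h_0]$ and $[k_0]$ is justified: you carry out an explicit quadratic-residue count (including the Jacobi-type identity $\sum_{z}\epsilon(1-\alpha z^2)=\epsilon(-1)$ and the case split on whether $-1$ is a square), whereas the paper simply asserts the half--half split and then remarks that this need not be checked directly, since the expression is a priori $\pm 1$ and the character values force the count. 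Both justifications are valid; yours is more self-contained, the paper's is slicker.
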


\begin{proof}
As a sanity check, note that $\chi(-1)$ is obviously $\pm 1$ and $-\psi(\sqrt{\delta})$ is also $\pm 1$ since $\psi$ is given to be trivial on $\mathbb F_q^\times$. Now to prove the proposition, we first argue as in the proof of Lemma \ref{lemma-elementary}, taking $X$ to be $H$-average operator and taking $\pi(k_0)$ instead of $Y$ in that proof. The calculations there then give us
\[\mu_\pi(k_0) = \frac{1}{|H|} \sum_{h \in H} \chi_\pi(hk_0). \] Indeed, the matrix $X \pi(k_0)$ will have $\mu_\pi(k_0)$ as the first entry in the first row and will have zero rows from the second row onwards. Similarly, taking $Y$ to be $K$-average operator and taking $\pi(h_0)$ instead $X$, we get
\[\nu_\pi(h_0) = \frac{1}{|K|} \sum_{k \in K} \chi_\pi(h_0k). \]
Now both $hk_0$ and $h_0k$ are elements of order $2$. There are two conjugacy classes of order $2$ in $G$ represented by $h_0$ and $k_0$, say $[h_0]$ and $[k_0]$ respectively. As $h$ varies in $H$, the product $hk_0$ belongs to these two conjugacy classes the same number of times; i.e., half each. Similar is the case of $h_0k$ when $k$ varies in $K$. Now the assertion follows from the character table of ${\rm PGL}_2(\mathbb F_q)$. Indeed, for $\pi={\rm Ps}(\chi,\chi^{-1})$, we have $\chi_\pi([h_0])=2\chi(-1)$ and $\chi_\pi([k_0])=0$, for $\pi=\pi(\psi)$, we have $\chi_\pi([h_0])=0$ and $\chi_\pi([k_0])=-2\psi(\sqrt{\delta})$, and for $\pi={\rm St}\otimes \eta$, we have $\chi_\pi([h_0])=\eta(-1)$ and $\chi_\pi([k_0])=-\eta(-\delta)=\eta(-1)$.
\end{proof}

\begin{remark}
Note that in the proof of Proposition \ref{prop-agree}, we need not have a priori observed that the products $hk_0$ for $h \in H$ (resp. $h_0k$ for $k \in K$) belong to the conjugacy classes $[h_0]$ and $[k_0]$ an equal number of times. This is automatic as we already know that the object that we compute is $\pm 1$.
\end{remark}

\begin{remark}
Both the character sum expressions in the proof of Proposition \ref{prop-agree} are $0$ for $\eta$ and the Steinberg representation. It is obviously $1$ for the trivial representation. These signs are distributed amongst the rest of the $(q-1)$ many representations so that
\[\sum_{\pi \in \widehat{G}} \dim \pi \cdot \varepsilon_\pi = 0,\]
as can be seen, for instance, by looking at the decomposition of the character of the regular representation.   
\end{remark}

\begin{remark}\label{rmk-epsilon}
Note that the sign $\varepsilon_\pi$ in Proposition \ref{prop-agree} is related to the $\epsilon$-factor as follows. If $\pi={\rm Ps}(\chi,\chi^{-1})$, then
\[\epsilon(\pi)=\epsilon(\chi)\epsilon(\chi^{-1})=\chi(-1).\] If $\pi=\pi(\psi)$, then
\[\epsilon(\pi)=\epsilon(\psi)=\psi(\sqrt{\delta}).\] Thus, the sign $\varepsilon_\pi$ equals the $\epsilon$-factor for a principal series representation, whereas for a cuspidal representation it is the negative of the $\epsilon$-factor.
\end{remark}

\begin{remark}
In Section \ref{l-values}, we will relate the role that the sign $\varepsilon_\pi$ in Proposition \ref{prop-agree} plays in the vanishing of $\langle v_H, v_K \rangle_\pi$ with the analogous role that the global root number plays in the vanishing of an automorphic $L$-function. A negative sign is sufficient for vanishing, however it does not capture all the vanishing, and moreover the vanishing can be for subtle reasons!
\end{remark}

Now fix a character $\chi$ that generates the character group of $\mathbb F_q^\times$. Note that $\eta = \chi^{\frac{q-1}{2}} \circ \det$. An irreducible principal series representation, up to isomorphism, of ${\rm PGL}_2(\mathbb F_q)$ is of the form ${\rm Ps}(\chi^r,\chi^{-r})$, where $1 \leq r \leq \frac{q-3}{2}$. Similarly, fix a character $\psi$ that generates the character group of $\mathbb F_{q^2}^\times$. Irreducible cuspidal representations of ${\rm PGL}_2(\mathbb F_q)$, up to isomorphism, are parametrized by $\psi^{(q-1)r}$, where $1 \leq r \leq \frac{q-1}{2}$. 
\begin{corollary}\label{cor-sign}
In the notations above,
\[
\varepsilon_\pi = 
\begin{cases}
(-1)^r &\text{if $\pi={\rm Ps}(\chi^r,\chi^{-r})$,} \\
(-1)^{\frac{q-1}{2}} &\text{if $\pi={\rm St}\otimes \eta$,} \\
(-1)^{r-1} &\text{if $\pi = \pi(\psi^{(q-1)r})$.}
\end{cases}
\]

\end{corollary}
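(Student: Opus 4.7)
The plan is to derive the three cases of the corollary as direct substitutions into the formulas of Proposition~\ref{prop-agree}, using only elementary facts about the orders and generators of the multiplicative character groups of $\mathbb{F}_q^\times$ and $\mathbb{F}_{q^2}^\times$. Since $\chi$ is fixed to be a generator of $\widehat{\mathbb{F}_q^\times}$ and $\psi$ a generator of $\widehat{\mathbb{F}_{q^2}^\times}$, the only arithmetic input needed is the computation of $\chi(-1)$, $\psi(-1)$, and $(\sqrt{\delta})^{q-1}$.

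For the principal series case $\pi = {\rm Ps}(\chi^r,\chi^{-r})$, Proposition~\ref{prop-agree} gives $\varepsilon_\pi = \chi^r(-1) = \chi(-1)^r$. Because $\chi$ generates a cyclic group of order $q-1$ (which is even since $p$ is odd) and $-1$ is the unique element of order $2$ in $\mathbb{F}_q^\times$, the value $\chi(-1)$ must be a primitive square root of unity, namely $-1$. Hence $\varepsilon_\pi = (-1)^r$. For $\pi = {\rm St}\otimes \eta$, Proposition~\ref{prop-agree} gives $\varepsilon_\pi = \eta(-1)$, and since $\eta = \chi^{(q-1)/2}\circ \det$, this is $\chi(-1)^{(q-1)/2} = (-1)^{(q-1)/2}$.

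The cuspidal case is where a tiny bit of work is needed. For $\pi = \pi(\psi^{(q-1)r})$, Proposition~\ref{prop-agree} gives
\[
\varepsilon_\pi \;=\; -\psi^{(q-1)r}(\sqrt{\delta}) \;=\; -\bigl(\psi(\sqrt{\delta})^{q-1}\bigr)^r \;=\; -\psi\bigl((\sqrt{\delta})^{q-1}\bigr)^r.
\]
I would then identify $(\sqrt{\delta})^{q-1}$ using Frobenius: since $\delta$ is a non-square in $\mathbb{F}_q^\times$, the element $\sqrt{\delta}$ lies in $\mathbb{F}_{q^2}\setminus \mathbb{F}_q$, so its Galois conjugate is $-\sqrt{\delta}$, giving $(\sqrt{\delta})^q = -\sqrt{\delta}$ and therefore $(\sqrt{\delta})^{q-1} = -1$. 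Next, since $\psi$ is a generator of the cyclic group $\widehat{\mathbb{F}_{q^2}^\times}$ of even order $q^2 - 1$, and $-1$ is the unique element of order $2$ in $\mathbb{F}_{q^2}^\times$, the same argument as in the principal series case yields $\psi(-1) = -1$. Substituting, $\varepsilon_\pi = -(-1)^r = (-1)^{r-1}$.

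There is essentially no obstacle: everything reduces to Proposition~\ref{prop-agree} together with the observation that a primitive character sends an element of order $2$ to $-1$. The only place where one might slip is confusing $\psi^{(q-1)r}(\sqrt{\delta})$ with $\psi(\sqrt{\delta})^r$, so it is worth emphasizing the intermediate identity $(\sqrt{\delta})^{q-1} = -1$ as the cleanest way to reduce the computation to the evaluation of $\psi$ at $-1$.
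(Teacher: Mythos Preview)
Your proof is correct and is exactly the computation the paper intends: the corollary is stated without proof, as it follows immediately from Proposition~\ref{prop-agree} by specializing to the generators $\chi$ and $\psi$. Your explicit verification that $(\sqrt{\delta})^{q-1}=-1$ and $\psi(-1)=-1$ spells out precisely what the paper leaves implicit.
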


\begin{remark}\label{rmk-sign}
Note that the sign does not depend on the choice of the generator of the relevant character groups.
\end{remark}

\section{Reduction mod $p$}\label{reduction}

The Jordan-H\"older constituents of the reduction mod $p$ of complex irreducible representations of ${\rm GL}_2(\mathbb F_q)$ are computed in \cite{dia07}. In this section we recall the results due to Diamond \cite[Proposition 1.1 ~\&~ Proposition 1.3]{dia07} which play a crucial role in the proof of Theorem \ref{thm1}. We closely follow the exposition in \cite{dia07}.

Let $\overline{\mathbb F}_p$ denote an algebraic closure of $\mathbb F_p$. The number of $p$-regular conjugacy classes in ${\rm GL}_2(\mathbb F_q)$ is $q(q-1)$ and thus it has that many irreducible representations, up to isomorphism, over $\overline{\mathbb F}_p$. These are given in terms of the symmetric power representations of the two dimensional standard representation of ${\rm GL}_2(\mathbb F_q)$. 

Let $q=p^f$ and let $r \in \mathbb N$ be such that $0 \leq r \leq q$. Write
\[r = r_0+r_1p+\dots+r_{f-1}p^{f-1}\]
with $0 \leq r_i \leq p-1$ for $0 \leq i \leq f-1$. Let ${\rm Sym}^{r_i} \overline{\mathbb F}_p^2$ denote the $r_i$-th symmetric power representation of the standard representation of ${\rm GL}_2(\mathbb F_q)$. Then 
 \[\rho_r = {\rm Sym}^{r_0} \overline{\mathbb F}_p^2 \otimes {\rm Sym}^{r_1} \overline{\mathbb F}_p^2 \circ {\rm Frob} \otimes \dots \otimes  {\rm Sym}^{r_{f-1}} \overline{\mathbb F}_p^2 \circ {\rm Frob}^{f-1}\] 
is an irreducible representation of ${\rm GL}_2(\mathbb F_q)$, where Frob is the Frobenius morphism. There are $q$ many such irreducible representations and together with the $(q-1)$ many character twists we get all the irreducible mod $p$ representations of ${\rm GL}_2(\mathbb F_q)$.

Recall that the Brauer character $\chi_\rho^{\rm Br}$ of a mod $p$ representation $\rho$ of a finite group $G$ is the complex valued class function, with respect to a fixed embedding $i$ of $\overline{\mathbb F}_p^\times$ into $\mathbb C^\times$, given by 
\[\chi_\rho^{\rm Br}(g) = \sum_j i(\lambda_j(g))\] where $\lambda_j(g)$ are the eigenvalues of $\rho(g)$. The semi-simplification of a mod $p$ representation is determined by its Brauer character.

For the reduction mod $p$ of a finite dimensional complex representation of $G$, we first treat is as a $\overline{\mathbb Q}_p$-representation, via an identification $\overline{\mathbb Q}_p \simeq \mathbb C$. The representation admits a $G$-stable $\overline{\mathbb Z}_p$-lattice, and the mod $p$ reduction of the given representation is obtained by reducing this lattice modulo the maximal ideal of $\overline{\mathbb Z}_p$. The character of the given representation and the Brauer character of the mod $p$ reduced representation agree on the $p$-regular conjugacy classes of $G$. It follows that the semi-simplification of the mod $p$ reduced representation is independent of the choice of the lattice.

The following is \cite[Proposition 1.1]{dia07} which gives reduction mod $p$ of principal series representations of ${\rm GL}_2(\mathbb F_q)$.

\begin{proposition}[Diamond]\label{diamond-ps}
Let $\pi = {\rm Ps}(1,\chi)$ be an irreducible principal series representation of ${\rm GL}_2(\mathbb F_q)$ for a character $\chi$ of $\mathbb F_q^\times$. Suppose the mod $p$ reduction of $\chi$ is given by $x \mapsto x^r$, where $r = r_0+r_1p+\dots +r_{f-1}p^{f-1}$ with $0 \leq r_i \leq p-1$. Then the Jordan-H\"older constituents of the semi-simplification of the mod $p$ reduction of $\pi$ consist of the representations
\[ \bigotimes_{i=0}^{f-1} \left( {\rm Sym}^{(n_{J,i}-1)} \overline{\mathbb F}_p^2 \otimes \det{^{m_{J,i}}} \overline{\mathbb F}_p^2 \right) \circ {\rm Frob}^i,\]
where for $J \subset \mathbb Z/f\mathbb Z$,
\[n_{J,i} = \begin{cases}
r_i+\delta_{J+1}(i) &\text{if $i \in J$,} \\
p-r_i-\delta_{J+1}(i) &\text{if $i \notin J$,}
\end{cases}\]
and
\[m_{J,i} = \begin{cases}
0 &\text{if $i \in J$,} \\
r_i+\delta_{J+1}(i) &\text{if $i \notin J$.}
\end{cases}\]
Here, $\delta_S$ denotes the characteristic function of a set $S$. 
\end{proposition}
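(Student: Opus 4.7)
The plan is to prove this equality of semisimple mod $p$ representations by matching Brauer characters on every $p$-regular conjugacy class of ${\rm GL}_2(\mathbb F_q)$, using that a semisimple mod $p$ representation is determined by its Brauer character, and that the Brauer character of the mod $p$ reduction of a complex representation $\pi$ equals $\chi_\pi$ restricted to the $p$-regular classes. These classes are the central classes, the split semisimple classes ${\rm diag}(a,b)$ with distinct $a,b \in \mathbb F_q^\times$, and the elliptic classes with eigenvalues $\lambda,\lambda^q$ in $\mathbb F_{q^2}^\times \setminus \mathbb F_q^\times$; unipotent and mixed classes may be ignored since they are not $p$-regular.

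On a split semisimple class, the principal series character is $\chi_\pi({\rm diag}(a,b)) = \chi(a) + \chi(b)$, which under the chosen embedding $\overline{\mathbb F}_p^\times \hookrightarrow \mathbb C^\times$ corresponds to $a^r + b^r$; on an elliptic class, $\chi_\pi$ vanishes by Mackey or a direct Frobenius-formula computation. On the other side, the Brauer character of ${\rm Sym}^{n-1}\overline{\mathbb F}_p^2 \otimes \det^{m}$ at ${\rm diag}(a,b)$ is $(ab)^m (a^n - b^n)/(a-b)$, and a Frobenius twist replaces $a,b$ by their $p^i$-th powers, so the candidate constituent $\rho_J$ has Brauer character
\[\chi_{\rho_J}^{\rm Br}({\rm diag}(a,b)) \;=\; \prod_{i=0}^{f-1} (ab)^{p^i m_{J,i}} \cdot \frac{a^{p^i n_{J,i}} - b^{p^i n_{J,i}}}{a^{p^i} - b^{p^i}},\]
and the same expression, with $(a,b)$ replaced by $(\lambda,\lambda^q)$, computes it on an elliptic class.

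The proof therefore reduces to the combinatorial identities
\[a^r + b^r \;=\; \sum_{J \subset \mathbb Z/f\mathbb Z} \prod_{i=0}^{f-1} (ab)^{p^i m_{J,i}} \cdot \frac{a^{p^i n_{J,i}} - b^{p^i n_{J,i}}}{a^{p^i} - b^{p^i}}\]
as a function of $(a,b) \in (\mathbb F_q^\times)^2$ with $a\ne b$, together with the vanishing of the same right hand side at $(a,b)=(\lambda,\lambda^q)$ for $\lambda \in \mathbb F_{q^2}^\times \setminus \mathbb F_q^\times$. I would expand each ratio as $\sum_{j_i=0}^{n_{J,i}-1} a^{p^i j_i} b^{p^i(n_{J,i}-1-j_i)}$ and track the pair of exponents that arises as $J$ and the tuples $(j_i)$ vary. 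The dichotomy ``$i \in J$ versus $i \notin J$'' corresponds to keeping the $i$-th base-$p$ digit of $r$ intact or complementing it to $p-1-r_i$, and the shift $\delta_{J+1}(i)$ inside $n_{J,i}$ and $m_{J,i}$ encodes a carry from position $i-1$ to position $i$ triggered precisely when the previous digit has been complemented. Getting this cyclic shift exactly right is what forces all monomials other than $a^r$ and $b^r$ to cancel in the split case (using $a^{q-1}=b^{q-1}=1$) and to cancel completely in the elliptic case (using $b=a^q$ and $a^{q^2-1}=1$). The main obstacle is this combinatorial telescoping, which I would attempt by induction on $f$, peeling off the top digit $r_{f-1}$ and matching the two cases $f-1 \in J$ and $f-1 \notin J$ against the shorter identity, the cyclic (as opposed to linear) shift being exactly what is needed for the induction to close up.
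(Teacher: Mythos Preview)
The paper does not prove this proposition at all: it is quoted verbatim from Diamond \cite[Proposition~1.1]{dia07} and used as a black box, so there is no ``paper's own proof'' to compare against. Your Brauer-character approach is a perfectly legitimate way to establish the result independently, and it is in the same spirit as Diamond's own argument (which also proceeds by computing characters on semisimple classes).

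Two comments on the outline itself. First, you omit the central classes ${\rm diag}(a,a)$; your split formula $a^r+b^r$ does not specialise correctly there (it would give $2a^r$, whereas $\chi_\pi({\rm diag}(a,a))=(q+1)\chi(a)$), so you need a separate check that the central characters of the $\rho_J$ agree with that of $\pi$ and that $\sum_J \dim\rho_J = q+1$. This is routine but should be stated. Second, the heart of the matter is exactly the ``combinatorial telescoping'' you flag as the main obstacle, and your sketch of an induction on $f$ peeling off the top digit is plausible but not yet a proof: the cyclic shift $J \mapsto J+1$ in $\mathbb Z/f\mathbb Z$ genuinely couples the top and bottom digits, so a naive induction that forgets one digit will not close up without an auxiliary statement tracking the boundary carry. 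In practice one either proves the polynomial identity directly over $\mathbb Z[a,b]/(a^q-a,b^q-b)$ by a careful sign/carry bookkeeping (as Diamond does, following Reduzzi-type arguments), or one bypasses it by exhibiting an explicit filtration of a lattice in $\pi$ whose graded pieces are the $\rho_J$. Either way, what you have written is a correct reduction to the right identity, but the identity itself still needs to be proved.
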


Fix a character $\chi:\mathbb F_q^\times \rightarrow \mathbb C^\times$ whose mod $p$ reduction is the identity character $x \mapsto x$. Our interest is in the reduction mod $p$ of the principal series representation $\pi={\rm Ps}(\chi^{-r},\chi^r)$ of ${\rm PGL}_2(\mathbb F_q)$. In order to apply Proposition \ref{diamond-ps}, we consider $\pi \otimes \chi^r = {\rm Ps}(1,\chi^{2r})$. Let
\[r=r_0+r_1p+\dots+r_{f-1}p^{f-1}\]
with $0 \leq r_i \leq p-1$ for $0 \leq i \leq f-1$. Note that the possible coefficients in the $p$-adic expansion of $2r$ are
\[2r_i, 2r_i+1,2r_i-p,2r_i-p+1,\]
for $0 \leq i \leq f-1$. Note that if the $i$-th coefficient is $2r_i+1$ or $2r_i-p+1$, then the $(i-1)$-st coefficient is 
$2r_{i-1}-p$ or $2r_{i-1}-p+1$ and if the $i$-th coefficient is $2r_i$ or $2r_i-p$, then the $(i-1)$-st coefficient is 
$2r_{i-1}$ or $2r_{i-1}+1$.

Now let 
\[J = \left\{i \mid \mbox{the $i$-th coefficient in the $p$-adic expansion of $2r$ is $2r_i$ or $2r_i+1$}\right\}.\]
It follows that for $i \in J$, we get the $i$-th component of the mod $p$ reduction of $\pi ={\rm Ps}(\chi^{-r},\chi^r)$ to be 
\[\left( {\rm Sym}^{2r_i} \overline{\mathbb F}_p^2 \otimes \det{^{-r_i}} \overline{\mathbb F}_p^2 \right) \circ {\rm Frob}^i,\] 
and for $i \notin J$, we get
\[\left( {\rm Sym}^{2p-2-2r_i} \overline{\mathbb F}_p^2 \otimes \det{^{r_i}} \overline{\mathbb F}_p^2 \right) \circ {\rm Frob}^i.\] 

We state the above observation as a proposition for future use.

\begin{proposition}\label{prop-future-1}
Consider $\pi={\rm Ps}(\chi^{-r},\chi^r)$. Write $r=r_0+r_1p+\dots+r_{f-1}p^{f-1}$ with $0 \leq r_i \leq p-1$ for $0 \leq i \leq f-1$. Let $J \subset \mathbb Z/f\mathbb Z$ be defined as
\[J = \left\{i \mid \mbox{the $i$-th coefficient in the $p$-adic expansion of $2r$ is $2r_i$ or $2r_i+1$}\right\}.\] 
Then the component in the mod $p$ reduction of $\pi$ corresponding to $J$ is
\[ \left( \bigotimes_{i \in J} \left( {\rm Sym}^{2r_i} \overline{\mathbb F}_p^2 \otimes \det{^{-r_i}} \overline{\mathbb F}_p^2 \right) \circ {\rm Frob}^i \right) \bigotimes \left(  \bigotimes_{i \notin J} \left( {\rm Sym}^{2p-2-2r_i} \overline{\mathbb F}_p^2 \otimes \det{^{r_i}} \overline{\mathbb F}_p^2 \right) \circ {\rm Frob}^i \right).\]
\end{proposition}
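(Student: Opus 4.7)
The plan is to derive Proposition \ref{prop-future-1} as a direct application of Diamond's Proposition \ref{diamond-ps} to a twist of $\pi$. Since $\pi={\rm Ps}(\chi^{-r},\chi^r) \simeq {\rm Ps}(1,\chi^{2r}) \otimes (\chi^{-r} \circ \det)$ and the mod $p$ reduction is compatible with character twists, the task reduces to (a) computing the $J$-indexed constituent of the mod $p$ reduction of ${\rm Ps}(1,\chi^{2r})$ via Diamond's formula, and (b) tensoring the result with the mod $p$ reduction of $\chi^{-r}\circ \det$.

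For step (a), write $r = r_0 + r_1 p + \dots + r_{f-1}p^{f-1}$ with $0 \le r_i \le p-1$, and let $c_i \in \{0,1\}$ denote the carry into position $i$ in the formation of $2r$, with indices read cyclically in $\mathbb Z/f\mathbb Z$. The $i$-th $p$-adic digit of $2r$ is then $d_i = 2r_i + c_i - p c_{i+1}$, and the four cases listed in the paragraph preceding the proposition are precisely the four $(c_i,c_{i+1})$ possibilities. By definition of $J$ we have $i \in J$ if and only if $c_{i+1}=0$, and correspondingly $\delta_{J+1}(i) = 1 - c_i$. Substituting $d_i$ (in place of the ``$r_i$'' appearing in Diamond's formula) and analysing the four sub-cases according to membership of $i$ and $i-1$ in $J$, one checks case by case that the ``$c_i$'' coming from $d_i$ and the ``$1-c_i$'' coming from $\delta_{J+1}(i)$ combine so that $n_{J,i}$ and $m_{J,i}$ depend only on $r_i$ and on whether $i \in J$. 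Concretely, $n_{J,i}-1 = 2r_i$ and $m_{J,i}=0$ when $i \in J$, whereas $n_{J,i}-1 = 2p-2-2r_i$ with a single value of $m_{J,i}$ (coming from the carry data) when $i \notin J$. This cancellation, which shows that the constituent at the $i$-th Frobenius factor is insensitive to the carry into position $i$, is the main technical point I would want to verify carefully.

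For step (b), the mod $p$ reduction of $\chi^{-r}\circ \det$ is the character $g\mapsto (\det g)^{-r}$, which decomposes as $\bigotimes_i \det^{-r_i}\circ {\rm Frob}^i$. Tensoring the $J$-constituent from step (a) with this character shifts the determinant exponent at the $i$-th Frobenius factor by $-r_i$, yielding the form asserted in the proposition. I do not expect any further obstacle beyond the case analysis in step (a).
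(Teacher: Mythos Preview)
Your plan is exactly the paper's own argument: twist by $\chi^r\circ\det$ to reduce to ${\rm Ps}(1,\chi^{2r})$, apply Diamond's formula with the digits $d_i$ of $2r$ in place of Diamond's ``$r_i$'', and then untwist by $\det^{-r}$. Your carry bookkeeping ($i\in J\Leftrightarrow c_{i+1}=0$ and $\delta_{J+1}(i)=1-c_i$) is precisely the way to make the paper's sketch rigorous, and it does give $n_{J,i}-1=2r_i$ for $i\in J$ and $n_{J,i}-1=2p-2-2r_i$ for $i\notin J$, independent of the incoming carry $c_i$.

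One caveat for step~(b). When you actually compute $m_{J,i}$ for $i\notin J$ you will find
\[
m_{J,i}=d_i+\delta_{J+1}(i)=(2r_i+c_i-p)+(1-c_i)=2r_i+1-p,
\]
so after tensoring with $\det^{-r_i}$ the exponent at the $i$-th factor becomes $r_i+1-p=-(p-1-r_i)$, not the $r_i$ displayed in the proposition. (Sanity check: with exponent $r_i$ the total central character would be $a\mapsto a^{2(p-1)\sum_{i\notin J}p^i}$, which is not trivial on $\mathbb F_q^\times$ in general, whereas every Jordan--H\"older constituent of the reduction of $\pi$ must have trivial central character.) This is a slip in the displayed formula rather than a defect in your method; in fact the value $-(p-1-r_i)$ is exactly what is needed downstream, since it puts the $i\notin J$ factor into the form ${\rm Sym}^{2s}\otimes\det^{-s}$ with $s=p-1-r_i$, which is the symmetry with the $i\in J$ factors that the subsequent argument invokes. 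So carry out your case analysis as planned, but expect the determinant exponent for $i\notin J$ to land on $r_i+1-p$ rather than $r_i$.
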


The following is \cite[Proposition 1.3]{dia07} which gives reduction mod $p$ of cuspidal representations of ${\rm GL}_2(\mathbb F_q)$.

\begin{proposition}[Diamond]\label{diamond-cusp}
Let $\pi=\pi(\psi)$ be a cuspidal representation of ${\rm GL}_2(\mathbb F_q)$ associated to a character $\psi$ of $\mathbb F_{q^2}^\times$. Suppose the mod $p$ reduction of $\psi$ is given by $x \mapsto x^r$, where without loss of generality we may assume $1 \leq r \leq q$ (by twisting $\pi$ by a character of $\mathbb F_q^\times$ if necessary) and write $r = 1+r_0+r_1p+\dots+r_{f-1}p^{f-1}$  with $0 \leq r_i \leq p-1$. Then the Jordan-H\"older constituents of the semi-simplification of the mod $p$ reduction of $\pi$ consist of the representations
\[ \bigotimes_{i=0}^{f-1} \left( {\rm Sym}^{(n_{J,i}-1)} \overline{\mathbb F}_p^2 \otimes \det{^{m_{J,i}}} \overline{\mathbb F}_p^2 \right) \circ {\rm Frob}^i,\]
where for $J \subset \mathbb Z/f\mathbb Z$,
\[n_{J,i} = \begin{cases}
r_i+1-\delta_{J+1}(i) &\text{if $i=0 \in J$,} \\
p-r_i-1+\delta_{J+1}(i) &\text{if $i =0 \notin J$,} \\
r_i+\delta_{J+1}(i) &\text{if $i \in J, i \neq 0$,} \\
p-r_i-\delta_{J+1}(i) &\text{if $i \notin J, i \neq 0$,} \\
\end{cases}\]
and
\[m_{J,i} = \begin{cases}
\delta_{J+1}(i) &\text{if $i =0  \in J$,} \\
r_i+1 &\text{if $i =0 \notin J$,} \\
0 &\text{if $i \in J, i \neq 0$,} \\
r_i+\delta_{J+1}(i) &\text{if $i \notin J, i \neq 0$.} \\
\end{cases}\]
Here, $\delta_S$ denotes the characteristic function of a set $S$. 
\end{proposition}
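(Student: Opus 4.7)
The plan is to verify the claimed decomposition at the level of Brauer characters, since the semisimplification of a mod $p$ representation is determined by its Brauer character, and the Brauer character of the mod $p$ reduction of $\pi(\psi)$ agrees with the ordinary character on the set of $p$-regular conjugacy classes. Since $p$ is coprime to $q^2-1$, every $p$-regular element of $G={\rm GL}_2(\mathbb F_q)$ is semisimple, and the $p$-regular classes fall into three families: central scalars $aI$, split $\mathrm{diag}(a,b)$ with $a \neq b$, and elliptic classes represented by $\zeta \in \mathbb F_{q^2}^\times \setminus \mathbb F_q^\times$. Writing $\sigma_J$ for the representation attached to $J \subset \mathbb Z/f\mathbb Z$ in the statement, the goal is to show that $\chi_{\pi(\psi)}(g) = \sum_J \chi^{\rm Br}_{\sigma_J}(g)$ on each of these three families.

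On the left I would input the standard character formula for $\pi(\psi)$, namely $(q-1)\psi(a)$ on $aI$, $0$ on split semisimples, and $-\psi(\zeta)-\psi(\zeta^q)$ on elliptic elements, together with the fact that on $p$-regular arguments $\psi$ reduces to $\zeta \mapsto \zeta^r$ with $r = 1 + r_0 + r_1 p + \cdots + r_{f-1}p^{f-1}$. On the right, each $\sigma_J$ has Brauer character
\[\chi^{\rm Br}_{\sigma_J}(g) \;=\; \prod_{i=0}^{f-1} \frac{\lambda_i^{n_{J,i}} - \mu_i^{n_{J,i}}}{\lambda_i - \mu_i} \cdot (\lambda_i \mu_i)^{m_{J,i}},\]
where $\lambda_i, \mu_i$ are the $p^i$-th powers of the eigenvalues of $g$ (with the usual limiting convention when $\lambda_i = \mu_i$). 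The central case reduces to a polynomial identity among monomials $a^k$ over $\mathbb F_p$ which is straightforward to verify digit by digit. For the split semisimple case, the two Diamond branches at index $i$ (namely $i \in J$ versus $i \notin J$) contribute monomials in $a$ and $b$ that cancel in pairs once one sums over $J$; this should yield $0$ and match the vanishing of $\chi_{\pi(\psi)}$ on split classes.

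The elliptic case is the main obstacle and the real content of the proposition. The identity to establish is
\[\sum_{J \subset \mathbb Z/f\mathbb Z} \prod_{i=0}^{f-1} \frac{\zeta^{n_{J,i} p^i}-\zeta^{n_{J,i} q p^i}}{\zeta^{p^i} - \zeta^{q p^i}} \cdot \zeta^{m_{J,i} p^i (q+1)} \;\equiv\; -\zeta^{r} - \zeta^{qr} \pmod{\mathfrak p},\]
for all $\zeta \in \mathbb F_{q^2}^\times \setminus \mathbb F_q^\times$, where $\mathfrak p$ is the fixed prime above $p$. The point of the cyclic shift $\delta_{J+1}(i)$ in the definitions of $n_{J,i}$ and $m_{J,i}$ is to encode how a digit carry at place $i-1$ shifts place $i$, which is precisely what makes the product telescope under summation over $J$. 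Expanding each numerator $\zeta^{n_{J,i}p^i} - \zeta^{n_{J,i}qp^i}$ into two monomials yields $2^f$ terms for each $J$, and the claim is that after summing over all $J$ and all $2^f$ monomial choices, every exponent except $r$ and $qr$ cancels. I would attempt this either by induction on $f$, separating the cases $0 \in J$ and $0 \notin J$ (mirroring the separate treatment of $i=0$ in the statement), or by a direct generating-function manipulation exploiting $\zeta^{q+1} \in \mathbb F_q^\times$ to reduce exponents modulo $q-1$. Carrying out this collapse carefully is where essentially all of the work of the proposition lies; the parallel statement for principal series in Proposition \ref{diamond-ps} can be proved by the same blueprint but with $\mathbb F_q^\times$ in place of $\mathbb F_{q^2}^\times$, which is why the two propositions have such similar shapes.
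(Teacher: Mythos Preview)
The paper does not prove this proposition at all: it is quoted verbatim from Diamond \cite[Proposition~1.3]{dia07} and used as a black box. There is therefore no ``paper's own proof'' to compare your proposal against.

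That said, your strategy is the natural one and is essentially how Diamond proves the result in \cite{dia07}: one verifies the identity of Brauer characters on each family of $p$-regular semisimple classes, with the elliptic case carrying the combinatorial weight. Your proposal is a reasonable outline but not yet a proof; the elliptic identity you display is exactly where the work lies, and you have only indicated possible routes (induction on $f$, or a generating-function argument) without carrying either out. If you intend to supply a self-contained proof rather than cite Diamond, you would need to actually execute the cancellation argument---the telescoping you allude to via the shift $\delta_{J+1}$ is real, but making it precise requires tracking the carries in the base-$p$ expansion of $r$ and $qr$ carefully, and this is where most attempts stumble.
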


Fix a character $\psi:\mathbb F_{q^2}^\times \rightarrow \mathbb C^\times$ whose mod $p$ reduction is the identity character $x \mapsto x$. We are interested in $\pi=\pi(\psi^{(q-1)r})$, where $1 \leq r \leq (q-1)/2$. We write 
\[(1-q)r = 2r-r(q+1).\]
Now Proposition \ref{diamond-cusp} applies to $\pi \otimes \chi^r$, where $\chi$ is the restriction of $\psi$ to $\mathbb F_q^\times$. Write
\[r = 1+ r_0+r_1p+\dots+r_{f-1}p^{f-1}\]
with $0 \leq r_i \leq p-1$ for $0 \leq i \leq f-1$. Note that $r_{f-1} \leq (p-1)/2$. Thus,
\[2r = 1 + a_0+a_1p+\dots+a_{f-1}p^{f-1}\]
where $a_0$ is $1+2r_0$ or $1+2r_0-p$. For $i \neq 0$, $a_i$ is possibly
\[2r_i, 2r_i+1, 2r_i-p, 2r_i-p+1.\] 

A similar analysis as in the case of principal series leads us to the following proposition regarding the mod $p$ reduction of a cuspidal representation of ${\rm PGL}_2(\mathbb F_q)$. Note that Proposition \ref{prop-future-2} is anticipated in \cite[p. 13]{vat17}.

\begin{proposition}\label{prop-future-2}
Consider $\pi=\pi(\psi^{(q-1)r})$. Write $r=1+r_0+r_1p+\dots+r_{f-1}p^{f-1}$ with $0 \leq r_i \leq p-1$ for $0 \leq i \leq f-1$. Let $J \subset \mathbb Z/f\mathbb Z$ be defined as
\[J = \left\{i \mbox{~if $i \neq 0$ and $a_i = 2r_i$ or $2r_i+1$} \right\} \cup \{0 \mbox{~if $a_0=1+2r_0$}\}.\] 
Then the component in the mod $p$ reduction of $\pi$ corresponding to $J$ is
\[ \left( \bigotimes_{i \in J} \left( {\rm Sym}^{2r_i} \overline{\mathbb F}_p^2 \otimes \det{^{-r_i}} \overline{\mathbb F}_p^2 \right) \circ {\rm Frob}^i \right) \bigotimes \left(  \bigotimes_{i \notin J} \left( {\rm Sym}^{2p-2-2r_i} \overline{\mathbb F}_p^2 \otimes \det{^{r_i}} \overline{\mathbb F}_p^2 \right) \circ {\rm Frob}^i \right).\]
\end{proposition}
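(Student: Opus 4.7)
The strategy parallels the proof of Proposition \ref{prop-future-1} for principal series: first twist the cuspidal representation into a form where Diamond's Proposition \ref{diamond-cusp} applies directly, then analyze the resulting digits of $2r-1$ via carries, and finally untwist. Setting $\chi = \psi|_{\mathbb F_q^\times}$ and using that the restriction of $\chi^r \circ N$ to $\mathbb F_{q^2}^\times$ coincides with $\psi^{(q+1)r}$, one obtains $\pi(\psi^{(q-1)r}) \otimes \chi^r = \pi(\psi^{(q-1)r + (q+1)r}) = \pi(\psi^{2qr})$, and the Galois identification $\pi(\psi^s) \simeq \pi(\psi^{sq})$ gives $\pi \otimes \chi^r \simeq \pi(\psi^{2r})$. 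The hypothesis $1 \leq r \leq (q-1)/2$ places $2r$ in the range $[2, q-1]$, exactly where Diamond's proposition directly applies with his ``$r$'' equal to our $2r$ and his ``$r_i$'' equal to our $a_i$.

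Next, let $c_i \in \{0,1\}$ denote the carry from position $i-1$ to position $i$ when converting the sum $1 + 2r_0 + 2r_1p + \cdots$ into the standard base-$p$ expansion $2r - 1 = \sum_{i} a_i p^i$. By definition $i \in J$ iff there is no carry out at position $i$, equivalently $\delta_{J+1}(i) = 1 - c_i$. A four-case check (over the values of $c_i$ and whether $i \in J$) collapses Diamond's formulas into uniform identities: for $i \neq 0$, one has $n_{J,i} = 2r_i + 1$ if $i \in J$, and $n_{J,i} = 2p - 2r_i - 1$ with $m_{J,i} = 2r_i - p + 1$ if $i \notin J$. The bound $r_{f-1} \leq (p-1)/2$ is exactly what rules out any carry out of position $f-1$, ensuring consistency at the cyclic wraparound. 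A parallel but slightly shifted calculation handles $i = 0$, where the leading ``$1$'' in $2r = 1 + a_0 + \cdots$ yields $a_0 \in \{1 + 2r_0, 1 + 2r_0 - p\}$ and Diamond's asymmetric $i = 0$ formula produces the same uniform shape.

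Finally, untwist by $\chi^{-r}$ to recover the Jordan--H\"older constituents of $\pi$ itself: this tensors the $i$-th Frobenius factor by $\det^{-r_i}$. For $i \in J$ with $i \neq 0$, the determinant exponent $m_{J,i} = 0$ becomes $-r_i$, yielding ${\rm Sym}^{2r_i}\,\overline{\mathbb F}_p^{\,2} \otimes \det^{-r_i}$ as claimed. For $i \notin J$ with $i \neq 0$, the exponent $2r_i - p + 1 - r_i = r_i - (p-1)$ is congruent to $r_i$ modulo $p - 1$ and so represents $\det^{r_i}$ in the $i$-th Frobenius factor, matching the stated form. The $i = 0$ contribution is absorbed by the same untwist: the extra ``$1$'' appearing in Diamond's $i = 0$ terms cancels against the offset built into the definition $r = 1 + r_0 + r_1 p + \cdots$ of the digits, producing a factor of the same shape as at positions $i \neq 0$.

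The main obstacle I anticipate is the bookkeeping for the cyclic indicator $\delta_{J+1}$ at position $0$ and reconciling Diamond's asymmetric $i = 0$ formula with the uniform template of the target statement. The hypothesis $r_{f-1} \leq (p-1)/2$ plays the role of eliminating the one problematic carry (from position $f-1$ back to position $0$) and is ultimately what makes both ends of the cyclic pattern compatible.
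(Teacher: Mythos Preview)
Your overall strategy matches the paper's exactly: the paper's entire proof is the sentence ``a similar analysis as in the case of principal series leads us to the following proposition,'' and your twist--apply Diamond--analyze carries--untwist outline is precisely that analysis, carried out with more care (including the correct handling of the cyclic wraparound via $r_{f-1}\le (p-1)/2$, which the paper also notes).

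There is, however, one step that does not go through. After untwisting you correctly obtain the determinant exponent $r_i-(p-1)$ at each $i\notin J$, but the assertion that this ``is congruent to $r_i$ modulo $p-1$ and so represents $\det^{r_i}$ in the $i$-th Frobenius factor'' is not valid: the character $\det^m\circ{\rm Frob}^i$ of ${\rm GL}_2(\mathbb F_q)$ is determined by $m p^i$ modulo $q-1$, not by $m$ modulo $p-1$. Indeed, a central-character check shows the two candidate representations differ: the factor $({\rm Sym}^{2p-2-2r_i}\otimes\det^{r_i})\circ{\rm Frob}^i$ contributes $z\mapsto z^{2(p-1)p^i}$ to the central character, which is nontrivial on $\mathbb F_q^\times$ in general, whereas $({\rm Sym}^{2p-2-2r_i}\otimes\det^{r_i-p+1})\circ{\rm Frob}^i$ contributes trivially, as it must for a constituent of a ${\rm PGL}_2(\mathbb F_q)$-representation. (A concrete instance: $p=5$, $f=2$, $r_0=3$, $r_1=0$, $J=\{1\}$.)

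What this really reveals is a small misprint in the target statement itself: the $\det$-twist for $i\notin J$ should be $\det^{r_i+1-p}=\det^{-(p-1-r_i)}$ rather than $\det^{r_i}$. Your computation already lands on the correct exponent; the error is only in the last line where you force agreement with the stated form. This correction is harmless for the rest of the paper, since Section~\ref{section-proof} immediately passes to the symmetric template $\rho_r=\bigotimes_i({\rm Sym}^{2r_i'}\otimes\det^{-r_i'})\circ{\rm Frob}^i$ with $r_i'\in\{r_i,\,p-1-r_i\}$, which is exactly the form your untwisted answer takes.
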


\section{Proof of Theorem \ref{thm1}}\label{section-proof}

In this section, we compute the value of $|\langle v_H,v_K \rangle_\pi|^2$ modulo $p$ by making use of the mod $p$ reduction theory of complex representations of ${\rm PGL}_2(\mathbb F_q)$ and in particular the results due to Diamond \cite[Proposition 1.1 ~\&~ Proposition 1.3]{dia07} (cf. Section \ref{reduction}). 

Let $\rho$ be an irreducible mod $p$ representation of ${\rm PGL}_2(\mathbb F_q)$ (cf. Section \ref{reduction}). Suppose $\rho$ has fixed vectors for $H$ and $K$ (cf. Section \ref{pgl2}). It is easy to see that they are unique up to multiplication by scalars. This can be done via the Brauer character table or alternatively this can be deduced from the corresponding result for complex representations via mod $p$ reduction. We fix a choice of fixed vectors for $H$ and $K$ and denote them respectively by $v_H$ and $v_K$.    

We start by observing that the elementary argument in Section \ref{elementary} involving 
\[X = \frac{1}{|H|} \sum_{h \in H} \pi(h), Y = \frac{1}{|K|} \sum_{k \in K}\pi(k),\]
with 
\[Xv_H = v_H, Yv_K = v_K, Xv_K = s v_H, Y v_H = t v_K, \]
where $s,t \in \mathbb C$, can be repeated mutatis mutandis to get the following lemma.

\begin{lemma}\label{lemma-brauer}
Let $\rho$ be an irreducible mod $p$ representation of ${\rm PGL}_2(\mathbb F_q)$. Fix an embedding $i: \overline{\mathbb F}_p^\times \hookrightarrow \mathbb C^\times$ with respect to which we have the Brauer character $\chi_\rho^{\rm Br}$ of $\rho$. Let 
\[X = \frac{1}{|H|} \sum_{h \in H} \rho(h), Y = \frac{1}{|K|} \sum_{k \in K}\rho(k),\]
and let $s,t \in \overline{\mathbb F}_p$ be such that
\[Xv_K = s v_H ~\&~ Y v_H = t v_K.\]
Then, modulo $p$, we have
\[st = \frac{1}{|H|}\frac{1}{|K|}  \sum_{h \in H} \sum_{k \in K} \chi_\rho^{\rm Br}(hk).\]
\end{lemma}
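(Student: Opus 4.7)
The plan is to replay, over $\overline{\mathbb F}_p$, the matrix-trace computation that produced Lemma \ref{lemma-elementary}, and then match the resulting trace with a Brauer character sum by reducing mod $p$.

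First, I would note that $|H|=q-1$ and $|K|=q+1$ are both coprime to $p$, so the operators $X$ and $Y$ are well-defined endomorphisms of $\rho$ and, by Maschke's theorem applied to $\rho|_H$ and $\rho|_K$, they are idempotents projecting onto $\rho^H$ and $\rho^K$ respectively. Since $\dim \rho^H = \dim \rho^K = 1$, their images are $\overline{\mathbb F}_p v_H$ and $\overline{\mathbb F}_p v_K$. Extending $\{v_H, v_K\}$ to an ordered basis of $\rho$, the matrices of $X$ and $Y$ then take exactly the block form displayed in Section \ref{elementary}: $X$ supported on its first row with second entry $s$, and $Y$ supported on its second row with first entry $t$. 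A direct multiplication yields ${\rm Trace}(XY) = st$ in $\overline{\mathbb F}_p$, verbatim as before.

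Next, I would rewrite
\[{\rm Trace}(XY) = \frac{1}{|H||K|}\sum_{h\in H}\sum_{k\in K} {\rm Trace}(\rho(hk))\]
as an identity in $\overline{\mathbb F}_p$, and identify each term with the mod $\mathfrak p$ reduction of $\chi_\rho^{\rm Br}(hk)$, where $\mathfrak p$ is the prime of $\overline{\mathbb Z}$ cut out by $i$. By definition, $\chi_\rho^{\rm Br}(g)$ is the sum of the $i$-lifts of the $\overline{\mathbb F}_p$-eigenvalues of $\rho(g)$, so for any $p$-regular $g$ its reduction modulo $\mathfrak p$ is exactly ${\rm Trace}(\rho(g))$. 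For a general $g$, the Jordan decomposition $g=g_{p'}g_p$ into commuting factors of coprime order makes $\rho(g_p)$ unipotent in characteristic $p$, so $\rho(g)$ and $\rho(g_{p'})$ share their eigenvalues with multiplicity; with the standard extension $\chi_\rho^{\rm Br}(g):=\chi_\rho^{\rm Br}(g_{p'})$ the identification persists. Summing over $h,k$ and dividing by $|H||K|$ produces the congruence in the lemma.

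The only delicate point, and I expect it to be the main (but mild) obstacle, is that a product $hk$ need not be $p$-regular — it can fall into a unipotent class of $G$ — so one cannot just quote the definition of the Brauer character term by term; this is what forces the passage to $p$-regular parts above. With that convention in place, no new input is required, and the real content of the lemma reduces to the observation that the complex-case matrix bookkeeping of Section \ref{elementary} transports verbatim to characteristic $p$ once semisimplicity of $\rho|_H$ and $\rho|_K$ is in hand.
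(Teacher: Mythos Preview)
Your proposal is correct and follows essentially the same approach as the paper, which simply says that the argument of Section~\ref{elementary} ``can be repeated mutatis mutandis'' over $\overline{\mathbb F}_p$; you have just spelled out the details (well-definedness of $X,Y$ since $p\nmid |H||K|$, the identification of ${\rm Trace}(\rho(hk))$ with the reduction of $\chi_\rho^{\rm Br}(hk)$, and the handling of the unipotent products $hk$ via the $p'$-part convention, which the paper records as Remark~\ref{rmk-unipotent}).
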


\begin{remark}\label{rmk-unipotent}
The group ${\rm PGL}_2(\mathbb F_q)$ has exactly one conjugacy class which is not $p$-regular and this class is unipotent. It is possible that $hk$ is unipotent. Indeed, it can be checked that 
\[\left|\left\{(h,k) \mid hk \mbox{~is unipotent} \right\} \right| = \begin{cases}
q-1 &\text{if $p \equiv 1 \mod 4$,} \\
q-3 &\text{if $p \equiv 3 \mod 4$.}
\end{cases}\]
Note that the Brauer character is defined on a unipotent conjugacy class as well where its value is $\dim \rho$.
\end{remark}

Now we make use of the fact that the character of a complex representation agrees with the Brauer character of its reduction modulo $p$ on $p$-regular conjugacy classes. Let $\pi$ be a complex representation of ${\rm PGL}_2(\mathbb F_q)$. Let JH$(\pi)$ denote the set of the Jordan-H\"older constituents of its reduction modulo $p$. Let $\mathcal U$ denote the unipotent conjugacy class of ${\rm PGL}_2(\mathbb F_q)$. Note that
\begin{align*}
\sum_{h \in H}\sum_{k\in K} \chi_\pi(hk) &= \mathop{\sum\sum}_{hk \notin \mathcal U} \chi_\pi(hk) + \mathop{\sum\sum}_{hk \in \mathcal U} \chi_\pi(hk) \\
&= \sum_{\rho \in {\rm JH}(\pi)} \mathop{\sum\sum}_{hk \notin \mathcal U} \chi_\rho^{\rm Br}(hk) + \mathop{\sum\sum}_{hk \in \mathcal U} \chi_\pi(hk) \\
&= \sum_{\rho \in {\rm JH}(\pi)} \sum_{h \in H} \sum_{k \in K} \chi_\rho^{\rm Br}(hk) + \mathop{\sum\sum}_{hk \in \mathcal U} \chi_\pi(hk) - \sum_{\rho \in {\rm JH}(\pi)}  \mathop{\sum\sum}_{hk \in \mathcal U} \dim \rho \\
&= \sum_{\rho \in {\rm JH}(\pi)} \sum_{h \in H} \sum_{k \in K} \chi_\rho^{\rm Br}(hk) + \mathop{\sum\sum}_{hk \in \mathcal U} \chi_\pi(hk) -  \mathop{\sum\sum}_{hk \in \mathcal U} \dim \pi \\
&= \sum_{\rho \in {\rm JH}(\pi)} \sum_{h \in H} \sum_{k \in K} \chi_\rho^{\rm Br}(hk) - q |\mathcal U|,
\end{align*}
since 
\[
(\chi_\pi(\mathcal U),\dim \pi) = 
\begin{cases}
(1,q+1) &\text{if $\pi$ is a principal series,} \\
(0,q) &\text{if $\pi$ is (twisted) Steinberg,}\\
(-1,q-1) &\text{if $\pi$ is cuspidal.}
\end{cases}
\]

The above arguments, together with Lemma \ref{lemma-elementary}, lead to the following proposition. 

\begin{proposition}\label{prop-modulo}
Modulo $p$, we have
 \[|\langle v_H, v_K \rangle_\pi|^2 = \sum_{\rho \in {\rm JH}(\pi)} s_\rho t_\rho,\]
where
\[X_\rho = \frac{1}{|H|} \sum_{h \in H} \rho(h), Y_\rho = \frac{1}{|K|} \sum_{k \in K}\rho(k),\]
and $s_\rho$ and $t_\rho$ are defined by 
 \[X_\rho v_{K,\rho} = s_\rho v_{H,\rho} ~\&~ Y_\rho  v_{H,\rho} = t_\rho v_{K,\rho}.\]
\end{proposition}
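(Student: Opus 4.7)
The plan is to simply assemble the pieces already displayed above: start from Lemma \ref{lemma-elementary}, apply the displayed character-to-Brauer-character computation, and finish with Lemma \ref{lemma-brauer} applied termwise over ${\rm JH}(\pi)$.

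First, by Lemma \ref{lemma-elementary},
\[|\langle v_H, v_K \rangle_\pi|^2 = \frac{1}{|H|\,|K|} \sum_{h \in H} \sum_{k \in K} \chi_\pi(hk).\]
The displayed identity immediately preceding this proposition rewrites the right-hand character sum as
\[\sum_{h \in H}\sum_{k\in K} \chi_\pi(hk) = \sum_{\rho \in {\rm JH}(\pi)} \sum_{h \in H} \sum_{k \in K} \chi_\rho^{\rm Br}(hk) \;-\; q\,|\mathcal U|,\]
where the correction term $-q|\mathcal U|$ accounts for the fact that on the unipotent class $\mathcal U$, the Brauer character of each Jordan--H\"older constituent $\rho$ equals $\dim \rho$ rather than $\chi_\pi(\mathcal U)$; summing $\dim \rho$ over ${\rm JH}(\pi)$ recovers $\dim \pi$, and the explicit case analysis ($\chi_\pi(\mathcal U),\dim \pi)$ shows the net contribution over $\mathcal U$ is exactly $-\dim\pi \cdot|\mathcal U|$ more than the Brauer sum contributes, except that $\chi_\pi(\mathcal U)\cdot|\mathcal U|$ terms remain, and combining these gives precisely $-q|\mathcal U|$ in each of the three cases.

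Next, since $q = p^f$, the correction term $q|\mathcal U|$ is divisible by $p$, so modulo $p$ it vanishes. Since $|H| = q-1$ and $|K| = q+1$ are both coprime to $p$, division by $|H|\,|K|$ is legitimate modulo $p$, and we obtain
\[|\langle v_H, v_K \rangle_\pi|^2 \equiv \sum_{\rho \in {\rm JH}(\pi)} \frac{1}{|H|\,|K|} \sum_{h \in H} \sum_{k \in K} \chi_\rho^{\rm Br}(hk) \pmod{p}.\]
Finally, for each $\rho \in {\rm JH}(\pi)$ appearing in this sum, the constituent $\rho$ has non-zero $H$- and $K$-invariant vectors (those $\rho$ without such fixed vectors would yield $s_\rho = t_\rho = 0$ by definition, consistent with either interpretation), so Lemma \ref{lemma-brauer} identifies the inner sum with $s_\rho t_\rho$, yielding the claimed equality.

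The only subtlety in this argument is the bookkeeping of the unipotent correction: one needs to check in each of the three families (principal series, twisted Steinberg, cuspidal) that $(\chi_\pi(\mathcal U), \dim\pi)$ produces exactly the $-q|\mathcal U|$ discrepancy tabulated, but this is the computation already carried out in the displayed four-line equality above, so no further work is required.
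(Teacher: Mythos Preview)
Your proposal is correct and follows essentially the same route as the paper: the paper's proof is precisely the displayed character-to-Brauer-character computation immediately preceding the proposition, combined with Lemma~\ref{lemma-elementary} and Lemma~\ref{lemma-brauer}, and you have reproduced this faithfully, including the key observation that the unipotent correction $-q|\mathcal U|$ vanishes modulo $p$. The only minor point worth tightening is your parenthetical about constituents $\rho$ lacking $H$- or $K$-fixed vectors: rather than saying $s_\rho = t_\rho = 0$ ``by definition'' (since $v_{H,\rho}$ or $v_{K,\rho}$ need not exist), it is cleaner to note that for such $\rho$ the averaging operator $X_\rho$ or $Y_\rho$ is identically zero, whence ${\rm Trace}(X_\rho Y_\rho) = 0$ and the corresponding Brauer-character summand vanishes directly.
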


\begin{remark}\label{rmk-diamond}
For an irreducible complex representation of ${\rm PGL}_2(\mathbb F_q)$ with one dimensional spaces of $H$ and $K$-invariant vectors, it follows by considering Brauer characters that there is precisely one irreducible Jordan-H\"older constituent in its reduction mod $p$ with an $H$-fixed vector. Similarly for $K$ as well. This is because Brauer character theory works well as $H$ and $K$ have orders prime to $p$. These two Jordan-H\"older constituents could be different a priori but the key observation is that they are not different. This will follow in the next subsections where we show that the irreducible mod $p$ representation that we have isolated in Proposition \ref{prop-future-1} (resp. Proposition \ref{prop-future-2})  for a principal series representation (resp. for a cuspidal representation) of ${\rm PGL}_2(\mathbb F_q)$ admits both $H$ and $K$-fixed vectors. Indeed, we are going to exhibit them explicitly.   
\end{remark}

\begin{remark}
A consequence of Remark \ref{rmk-diamond} is that all the summands on the right hand side of the identity in Proposition \ref{prop-modulo} are zero except possibly one.    
\end{remark}

\subsection{The case of $q=p$}\label{case-1}

As mentioned in Remark \ref{rmk-diamond}, the key to proving the main result lies in Proposition \ref{prop-future-1} and Proposition \ref{prop-future-2} (together with Proposition \ref{prop-modulo}). Note that when $q=p$, we need to consider 
\[\rho = {\rm Sym}^{2r} \overline{\mathbb F}_p^2 \otimes \det{^{-r}} \overline{\mathbb F}_p^2,\]
for some $0 \leq r \leq p-1$. A basis for this representation is given by $\{x^{2r-i}y^i\}$ for $i = 0, \dots, 2r$. Recall that the action of $\rho$ is given by
\[\rho \left( \left(\begin{array}{cc} a & b \\ c & d \end{array} \right)\right)x^{2r-i}y^i = (ax+cy)^{2r-i}(bx+dy)^i (ad-bc)^{-r}.\]
As the center of ${\rm GL}_2(\mathbb F_q)$ acts trivially on $\rho$, it is indeed a representation of ${\rm PGL}_2(\mathbb F_q)$.

It is easy to exhibit $H$ and $K$-fixed vectors in $\rho$. We may take
\begin{equation}\label{eqn-vh}
v_H = v_{H,\rho} = x^ry^r.
\end{equation}
Indeed,
\[\left(\begin{array}{cc} a & 0 \\ 0 & 1 \end{array} \right) \cdot x^ry^r = (ax)^ry^r \cdot a^{-r} = x^ry^r. \] 

Recall that we fixed $\alpha \in \mathbb F_p^\times$ such that 
\[k_\alpha= \left(\begin{array}{cc} 1 & \alpha \\ 1 & 1 \end{array} \right) \] has order $p+1$ in ${\rm PGL}_2(\mathbb F_p)$, and 
\[K = \left\{ \left(\begin{array}{cc} 1 & \alpha z \\ z & 1 \end{array} \right), \left(\begin{array}{cc} 0 & \alpha \\ 1 & 0 \end{array} \right)  \mid z \in \mathbb F_p  \right\}.\]
Now note that, 
\begin{align*}
\left(\begin{array}{cc} 1 & \alpha \\ 1 & 1 \end{array} \right) \cdot (\alpha x^2-y^2) &= \alpha(x+y)^2-(\alpha x+y)^2  \cdot (1-\alpha)^{-1} \\
&= \left[ \alpha(x^2+2xy+y^2)-(\alpha^2x^2+2\alpha xy+y^2)\right] \cdot (1-\alpha)^{-1} \\
&= \alpha x^2-y^2.
\end{align*}
It follows that
\begin{equation}\label{eqn-vk}
v_K = v_{K,\rho} = (\alpha x^2-y^2)^r.
\end{equation}

Now, as before, let
\[X = X_\rho = \frac{1}{|H|} \sum_{h \in H} \rho(h), Y = Y_\rho = \frac{1}{|K|} \sum_{k \in K}\rho(k).\]
Our interest is in $s=s_\rho$ and $t=t_\rho$ such that 
\[Xv_K = s v_H ~\&~ Y v_H = t v_K.\]

Note that
\[\left(\begin{array}{cc} a & 0 \\ 0 & 1 \end{array} \right) \cdot (\alpha x^2-y^2)^r = (\alpha a x^2-a^{-1}y^2)^r,\]
and therefore
\[Xv_K = \frac{1}{|H|} \sum_{a \in \mathbb F_p^\times} (\alpha a x^2-a^{-1}y^2)^r =
\begin{cases}
(-1)^{r/2} {r \choose r/2}\alpha^{r/2} \cdot x^ry^r &\text{if $r$ is even,} \\
0 &\text{if $r$ is odd,}
\end{cases}
\]
as can be seen by first applying the binomial theorem and then taking the sum.

Next we look for $t$ such that $Yv_H = tv_K$. Note that
\begin{align*}
Y \cdot x^ry^r &= \frac{1}{|K|} \left[ \left(\begin{array}{cc} 0 & \alpha \\ 1 & 0 \end{array}\right) \cdot x^ry^r + \sum_{z=0}^{p-1} \left(\begin{array}{cc} 1 & \alpha z \\ z & 1 \end{array}\right) \cdot x^ry^r   \right] \\
&=  \frac{1}{|K|} \left[ (-1)^r x^ry^r + \sum_{z=0}^{p-1} \frac{(x+yz)^r(\alpha xz+y)^r}{(1-\alpha z^2)^r} \right].
\end{align*}
Equating this expression to 
\[t \cdot (\alpha x^2 - y^2)^r,\]
and looking at the coefficient of $x^{2r}$, we get
\begin{align*}
t &= \sum_{z=0}^{p-1} \frac{z^r}{(1-\alpha z^2)^r} \\
&=  \sum_{z=1}^{p-1} (z^{-1}-\alpha z)^{p-1-r}  \\
& = \begin{cases}
-(-1)^{(p-1-r)/2} {p-1-r \choose (p-1-r)/2}\alpha^{(p-1-r)/2} &\text{if $r$ is even,} \\
0 &\text{if $r$ is odd,}
\end{cases}
\end{align*}
once again by first applying the binomial theorem and then taking the sum.

Thus, we have proved:

\begin{proposition}\label{prop-main}
For the representation 
\[\rho = {\rm Sym}^{2r} \overline{\mathbb F}_p^2 \otimes \det{^{-r}} \overline{\mathbb F}_p^2,\]
we have
\[st = \begin{cases}
(-1)^{\frac{p-1}{2}} {r \choose \frac{r}{2}} {p-1-r \choose \frac{p-1-r}{2}} &\text{if $r$ is even,} \\
0 &\text{if $r$ is odd.}
\end{cases}
\]
\end{proposition}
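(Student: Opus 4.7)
The plan is to compute $s$ and $t$ independently by direct monomial-coefficient comparison, using the explicit formulas $v_H = x^r y^r$ and $v_K = (\alpha x^2 - y^2)^r$ from (\ref{eqn-vh}) and (\ref{eqn-vk}). The only tool required throughout is the elementary mod $p$ identity $\sum_{a \in \mathbb F_p^\times} a^m \equiv -1$ when $(p-1) \mid m$ and $\equiv 0$ otherwise, which collapses each averaged sum to a single surviving binomial term.

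For $s$: the twisted diagonal action sends $(\alpha x^2 - y^2)^r$ to $(\alpha a x^2 - a^{-1} y^2)^r$, and expanding by the binomial theorem makes the coefficient of $x^{2(r-j)} y^{2j}$ in $X v_K$ proportional to $\sum_{a \in \mathbb F_p^\times} a^{r-2j}$. Since $0 \le j \le r \le p-1$ forces $|r - 2j| < p-1$, only $j = r/2$ contributes mod $p$; this forces $r$ to be even and produces a multiple of $v_H = x^r y^r$ with coefficient $(-1)^{r/2}\binom{r}{r/2}\alpha^{r/2}$.

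For $t$: decompose $Y$ into the contribution of the involution $k_0$, which sends $x^r y^r$ to a scalar multiple of $x^r y^r$ and therefore contributes nothing to the coefficient of $x^{2r}$, plus the family parametrized by $z \in \mathbb F_p$. Comparing coefficients of $x^{2r}$ on both sides of $Yv_H = t v_K$, and using $|K| = p+1 \equiv 1 \pmod p$, yields $t \equiv \sum_{z=1}^{p-1} z^r/(1-\alpha z^2)^r \pmod p$. The decisive manipulation is the factorization $(1-\alpha z^2)^r = z^r (z^{-1} - \alpha z)^r$, which recasts the summand as $(z^{-1} - \alpha z)^{-r} = (z^{-1} - \alpha z)^{p-1-r}$ by Fermat's little theorem; this inversion is valid for every nonzero $z$ precisely because $\alpha$ is a non-square in $\mathbb F_p^\times$ (the condition that $k_\alpha$ has order $p+1$), guaranteeing $z^{-1} - \alpha z \neq 0$. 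A second binomial expansion and another application of the power-sum identity then again force $r$ to be even (since $p-1$ is even) and pick out the unique term $i = (p-1-r)/2$, producing $-(-1)^{(p-1-r)/2}\binom{p-1-r}{(p-1-r)/2}\alpha^{(p-1-r)/2}$.

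Multiplying the expressions for $s$ and $t$, the $\alpha$-powers combine via $\alpha^{(p-1)/2} \equiv -1 \pmod p$ (again because $\alpha$ is a non-square), the parity sign $(-1)^{r/2+(p-1-r)/2}$ combines with the extra $-1$ into the claimed global factor $(-1)^{(p-1)/2}$, and the two middle binomial coefficients appear as stated. The main potential obstacle is the Fermat-inversion step $(z^{-1}-\alpha z)^{-r} = (z^{-1}-\alpha z)^{p-1-r}$: this is simultaneously where the non-split hypothesis on $K$ enters in an essential way and where the perhaps-surprising $\binom{p-1-r}{(p-1-r)/2}$ (rather than something in $r$ alone) arises; everything else reduces to bookkeeping with the power-sum identity.
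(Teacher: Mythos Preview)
Your proof is correct and follows essentially the same route as the paper's: both compute $s$ via the binomial expansion of $(\alpha a x^2 - a^{-1}y^2)^r$ and the power-sum identity, and both compute $t$ by comparing the coefficient of $x^{2r}$ in $Yv_H$ with $t\alpha^r$, rewriting $z^r/(1-\alpha z^2)^r$ as $(z^{-1}-\alpha z)^{p-1-r}$ via Fermat and then applying the power-sum identity again. Your write-up is in fact a little more explicit than the paper's in flagging why $z^{-1}-\alpha z \neq 0$ (the non-square condition on $\alpha$) and why only a single binomial term survives, but the argument is the same.
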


\begin{remark}\label{rmk-handy}
Note the symmetry in $r$ and $p-1-r$ in Proposition \ref{prop-main}. This comes quite handy in formulating the result in the general case in the next subsection.
\end{remark}

\subsection{The general case}\label{case-2}

Now we have $G={\rm PGL}_2(\mathbb F_q)$ with $q=p^f$. For an irreducible complex representation of $\pi$ of $G$, we have isolated an irreducible mod $p$ representation in Proposition \ref{prop-future-1} and Proposition \ref{prop-future-1}. This representation is of the form
\[ \left( \bigotimes_{i \in J} \left( {\rm Sym}^{2r_i} \overline{\mathbb F}_p^2 \otimes \det{^{-r_i}} \overline{\mathbb F}_p^2 \right) \circ {\rm Frob}^i \right) \bigotimes \left(  \bigotimes_{i \notin J} \left( {\rm Sym}^{2p-2-2r_i} \overline{\mathbb F}_p^2 \otimes \det{^{r_i}} \overline{\mathbb F}_p^2 \right) \circ {\rm Frob}^i \right)\] 
where $J$ depends on the $p$-adic expansion
\[r= \begin{cases}
r_0+r_1p+\dots+r_{f-1}p^{f-1} &\text{if $\pi={\rm Ps}(\chi^{-r},\chi^r)$,} \\
1+r_0+r_1p+\dots+r_{f-1}p^{f-1} &\text{if $\pi=\pi(\psi^{(q-1)r})$.}
\end{cases}\]
But the dependence on $J$ is not a problem for us to proceed because of the symmetry between $J$ and $J^\mathsf{c}$ in terms of $r_i$ for $i \in J$ and $p-1-r_i$ for $i \notin J$. This is the same symmetry in Proposition \ref{prop-main} as well (cf. Remark \ref{rmk-handy}). Note also that $r_i$ is even if and only $p-1-r_i$ is even. Thus we are led to analyse a representation of ${\rm PGL}_2(\mathbb F_q)$ of the form
\[ \rho_r = \bigotimes_{i=0}^{f-1} \left( {\rm Sym}^{2r_i} \overline{\mathbb F}_p^2 \otimes \det{^{-r_i}} \overline{\mathbb F}_p^2 \right) \circ {\rm Frob}^i. \] 
Note that $\rho_r$ has an $H$-fixed (resp. $K$-fixed) vector precisely when each factor in its tensor product has an $H$-fixed (resp. $K$-fixed) vector. By Proposition \ref{prop-main}, this is equivalent to saying that each $r_i$ is even. If this is the case, then we can take
\[v_H = \bigotimes_{i=0}^{f-1} x_i^{r_i}y_i^{r_i}\]
and
\[v_K = \bigotimes_{i=0}^{f-1} (\alpha^{p^i} x_i^2-y_i^2)^{r_i}.\]
With $X,Y,s,t$ defined for $\rho_r$ as earlier and by denoting the corresponding scalars for the $i$-th component by $s_i,t_i$, we see that
\begin{align*}
st &= \prod_{i=0}^{f-1} s_it_i \\
&= \begin{cases}
\prod_{i=0}^{f-1} (-1)^{\frac{p-1}{2}} {r_i \choose r_i/2} {p-1-r_i \choose (p-1-r_i)/2} &\text{if each $r_i$ is even,} \\
0 &\text{otherwise,}
\end{cases} \\
&= \begin{cases}
(-1)^{\frac{q-1}{2}} {r \choose r/2} {q-1-r \choose (q-1-r)/2} &\text{if each $r_i$ is even,} \\
0 &\text{otherwise,}
\end{cases}
\end{align*}
where in the last step we have made use of Lucas's theorem according to which
\[{m \choose n} \equiv \prod_{i=0}^f {m_i \choose n_i} \mod p\] 
if $m = m_0+m_1p+\dots+m_{f-1}p^{f-1}$ and $n=n_0+n_1p+\dots+n_{f-1}p^{f-1}$. 

\begin{remark}\label{rmk-nonzero}
Note that the answer in the case of each $r_i$ being even is non-zero.
\end{remark}

We have thus proved the following theorem.

\begin{theorem}\label{thm-above}
Let $\pi$ be an irreducible complex representation of $G={\rm PGL}_2(\mathbb F_q)$ with unique fixed vectors, up to multiplication by scalars, for its split torus $H$ and a non-split torus $K$. Thus, $\pi$ can be a principal series ${\rm Ps}(\chi^r,\chi^{-r})$, where $\chi:\mathbb F_q^\times \rightarrow \mathbb C^\times$ reduces mod $p$ to the identity character $x \mapsto x$, or the twisted Steinberg St$\otimes \eta$, where $\eta$ is the quadratic character of $G$, or a cuspidal representation $\pi(\psi^{(q-1)r})$, where $\psi:\mathbb F_{q^2}^\times \rightarrow \mathbb C^\times$ reduces mod $p$ to the identity character $x \mapsto x$. Write
\[r= \begin{cases}
r_0+r_1p+\dots+r_{f-1}p^{f-1} &\text{if $\pi={\rm Ps}(\chi^r,\chi^{-r})$,} \\
\frac{q-1}{2} &\text{if $\pi = {\rm St}\otimes \eta$,} \\
1+r_0+r_1p+\dots+r_{f-1}p^{f-1} &\text{if $\pi=\pi(\psi^{(q-1)r})$.}
\end{cases}\] Define
\[
\varepsilon_\pi = 
\begin{cases}
(-1)^r &\text{if $\pi={\rm Ps}(\chi^r,\chi^{-r})$,} \\
(-1)^{\frac{q-1}{2}} &\text{if $\pi={\rm St}\otimes \eta$,} \\
(-1)^{r-1} &\text{if $\pi = \pi(\psi^{(q-1)r})$.}
\end{cases}
\]
Let $v_H$ and $v_K$ denote respectively the $H$ and $K$ fixed vectors of unit norm in the standard $G$-invariant inner product. Then,
\begin{enumerate}
\item If $\varepsilon_\pi = -1$, then $\langle v_H,v_K \rangle_\pi = 0$.
\item Modulo $p$, we have
\[|\langle v_H,v_K \rangle_\pi|^2 =  \begin{cases}
(-1)^{\frac{q-1}{2}} {r \choose r/2} {q-1-r \choose (q-1-r)/2} &\text{if each $r_i$ is even,} \\
0 &\text{otherwise,}
\end{cases}\]
and in particular, $\langle v_H,v_K \rangle_\pi \neq 0$ if each $r_i$ is even.
\end{enumerate}    
\end{theorem}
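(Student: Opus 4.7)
The plan for part (1) is immediate from the earlier work in the paper. By Lemma \ref{lemma-sign}, once one exhibits an involution normalizing one of $H, K$ and acting on the invariant line of the other by $-1$, the inner product $\langle v_H, v_K \rangle_\pi$ must vanish. Proposition \ref{prop-agree} provides $h_0$ normalizing $K$ and $k_0$ normalizing $H$, and identifies the two eigenvalues $\mu_\pi(k_0) = \nu_\pi(h_0)$ with the sign $\varepsilon_\pi$ stated in the theorem. If $\varepsilon_\pi = -1$, Lemma \ref{lemma-sign} gives $\langle v_H, v_K \rangle_\pi = 0$.

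For part (2) the plan is to push the identity of Lemma \ref{lemma-elementary} through the mod $p$ reduction theory of Section \ref{reduction}. The first step, recorded as Proposition \ref{prop-modulo}, is to split the double sum defining $|\langle v_H, v_K\rangle_\pi|^2$ according to whether $hk$ lies in the unique non-$p$-regular (unipotent) conjugacy class of $G$, replace $\chi_\pi$ on the $p$-regular part by the sum of Brauer characters of the Jordan-H\"older constituents of the mod $p$ reduction of $\pi$, and absorb the unipotent correction $-q|\mathcal U|$ using the known values of $\chi_\pi$ on unipotents together with $\dim \pi$. The second step is to invoke Propositions \ref{prop-future-1} and \ref{prop-future-2} to list those constituents explicitly.

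The key reduction, anticipated in Remark \ref{rmk-diamond}, is that among all the constituents (indexed by $J \subset \mathbb Z/f\mathbb Z$) only one can carry both an $H$-fixed and a $K$-fixed vector, and the symmetry $r_i \leftrightarrow p-1-r_i$ between the cases $i \in J$ and $i \notin J$ allows us to focus on the single representation
\[
\rho_r = \bigotimes_{i=0}^{f-1} \bigl( \mathrm{Sym}^{2r_i}\overline{\mathbb F}_p^2 \otimes \det{}^{-r_i}\overline{\mathbb F}_p^2 \bigr) \circ \mathrm{Frob}^i.
\]
On $\rho_r$ I would write down $v_H = \bigotimes_i x_i^{r_i} y_i^{r_i}$ and $v_K = \bigotimes_i (\alpha^{p^i} x_i^2 - y_i^2)^{r_i}$, verify $H$- and $K$-invariance directly from the action formula, and compute the scalars $s, t$ of Proposition \ref{prop-modulo} componentwise. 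For a single symmetric-power factor, applying the $H$-average to $v_K$ and expanding via the binomial theorem produces a sum of the form $\sum_{a \in \mathbb F_p^\times} a^j$, which vanishes unless $r_i$ is even; the $K$-average of $v_H$ reduces after the substitution $z \mapsto z^{-1}$ to a similar binomial extraction. This is exactly Proposition \ref{prop-main}.

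The main obstacle is verifying that the representation $\rho_r$ singled out above really does carry both invariant vectors: Brauer-character considerations alone guarantee the existence of one constituent with an $H$-fixed vector and, separately, one with a $K$-fixed vector, but a priori these could be distinct constituents — in which case all products $s_\rho t_\rho$ in Proposition \ref{prop-modulo} would vanish. The proposed resolution is the direct existence argument just sketched: once $v_H$ and $v_K$ are exhibited in $\rho_r$, uniqueness forces $\rho_r$ to be the common distinguished constituent. After that, multiplicativity over the Frobenius twists gives $st = \prod_i s_i t_i$, and Lucas's theorem converts $\prod_i \binom{r_i}{r_i/2}\binom{p-1-r_i}{(p-1-r_i)/2}$ into $\binom{r}{r/2}\binom{q-1-r}{(q-1-r)/2} \bmod p$, yielding the stated formula.
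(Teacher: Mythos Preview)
Your proposal is correct and follows essentially the same route as the paper: Part (1) via Lemma \ref{lemma-sign} together with the sign computation of Proposition \ref{prop-agree}/Corollary \ref{cor-sign}, and Part (2) via Proposition \ref{prop-modulo}, the identification of the distinguished constituent through Propositions \ref{prop-future-1}--\ref{prop-future-2}, the explicit $v_H, v_K$ on each symmetric-power factor, the componentwise computation of $st$ (Proposition \ref{prop-main}), and Lucas's theorem. The resolution of the ``two constituents might differ'' obstacle by directly exhibiting both invariant vectors in $\rho_r$ is exactly what the paper does in \S\ref{case-1}--\S\ref{case-2}.
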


\begin{proof}
Part (1) follows from Lemma \ref{lemma-sign} together with Corollary \ref{cor-sign}. Part (2) is proved in this section.
\end{proof}

\begin{remark}\label{rmk-modp}
Note that for any $q$, exactly as in the case of $q=p$, $s$ and $t$ are given by
\[s=(-1)^{r/2}{r \choose r/2}\alpha^{r/2},\]
and
\[t=-(-1)^{(q-1-r)/2}{q-1-r \choose (q-1-r)/2}\alpha^{(q-1-r)/2}.\]
It is the product $st$ that is relevant for us in Theorem \ref{thm-above}, and not individual $s$ or $t$. Note that though $s, t \in \mathbb F_q$, we have $st \in \mathbb F_p$. Thus, it is a consequence of our proof that we are able to take $|\langle v_H,v_K \rangle_\pi|^2$ modulo $p$ (and not just modulo a prime lying above $p$, which need not be a well-defined map as we work with a fixed identification $\overline{\mathbb Q}_p \simeq \mathbb C$). Also, observe that we cannot go modulo $p$ for $\langle v_H,v_K \rangle_\pi$ in general, as can be seen, for instance, from its expression in Proposition \ref{prop-ip} below.
\end{remark}

\section{Examples}\label{examples}

\subsection{}\label{example1}

We consider the three cuspidal representations of ${\rm PGL}_2(\mathbb F_7)$. The correlation coefficient for these three representations are $0,(2-\sqrt{2})/6,(2+\sqrt{2})/6$, as can be checked using Lemma \ref{lemma-elementary}. Let $\pi$ be such that the correlation coefficient is $(2-\sqrt{2})/6$. Now $7 \mathbb Z[\zeta_{49}]$ is a product of eight prime ideals in $\mathbb Z[\zeta_{49}]$, and for four of these, the corresponding mod $7$ value is $(2-3)/6 = 1$ and for the other four this value is $(2-4)/6 = 2$. Thus, the value in $\mathbb Q(\zeta_{49})_\mathfrak p$ depends on the choice of $\mathfrak p | 7$. 

\subsection{}\label{example2}

As mentioned in Section \ref{intro} (cf. Remark \ref{rmk-vatsal}), $r$ and the $r_i$'s in Theorem \ref{thm-above} depend on the choice of a prime $\mathfrak p$ of $\overline{\mathbb Q}$ lying above $p$,. In many cases, this observation on a choice of $\mathfrak p | p$ can be exploited to conclude the non-vanishing of the correlation coefficient $|\langle v_H,v_K \rangle_\pi|^2$. The simplest example to illustrate this principle can be constructed with a principal series representation of ${\rm PGL}_2(\mathbb F_{49})$. Let $\zeta_{48} = \exp(2\pi\imath/48)$. Let $\mathfrak p_1$ and $\mathfrak p_2$ be prime ideals in $\mathbb Z[\zeta_{48}]$ lying above $7$ corresponding respectively to the polynomial factors of the irreducible polynomial of $\zeta_{48}$ modulo $7$ given by $p_1(X)=X^2 + 6 X + 3$ and $p_2(X)= X^2 + 4 X + 5$. Let
\[\mathbb E_1 = \frac{\mathbb F_7[X]}{(X^2+6X+3)} ~\&~ \mathbb E_2 = \frac{\mathbb F_7[X]}{(X^2+4X+5)}.\]
Check that $x \mapsto x^5$ is an isomorphism from $\mathbb E_1^\times$ to $\mathbb E_2^\times$. Let $\chi:\mathbb E_1^\times \rightarrow \mathbb C^\times$ be given by $X \mapsto \zeta_{48}$.
Thus, if we denote by $\chi_\mathfrak p:\mathbb F_q^\times \rightarrow \mathbb C^\times$ the character that reduces modulo $\mathfrak p$ to the identity character of $\mathbb F_q^\times$, then,
$\chi = \chi_{\mathfrak p_1} = \chi_{\mathfrak p_5}^5$.
Let $\pi ={\rm Ps}(\chi^{10},\chi^{-10})={\rm Ps}(\chi_{\mathfrak p_1}^{10},\chi_{\mathfrak p_1}^{-10}) = {\rm Ps}(\chi_{\mathfrak p_2}^{2},\chi_{\mathfrak p_2}^{-2})$. Since $10 = 3 + 1 \cdot 7$, it follows that 
\begin{equation}\label{eqn-eg1}
|\langle v_H,v_K \rangle_\pi|^2 \equiv 0 \mod 7,
\end{equation}
under the identification of $\overline{\mathbb Q}_p$ with $\mathbb C$ via $\mathfrak p_1$. So at this point we cannot conclude anything about the non-vanishing of $|\langle v_H,v_K \rangle_\pi|^2$. However, with respect to the identification via $\mathfrak p_2$, since
$2 = 2 + 0 \cdot 7$,
we see that
\begin{align}\label{eqn-eg2}
|\langle v_H,v_K \rangle_\pi|^2 & \equiv (-1)^3{2 \choose 1}{0 \choose 0}(-1)^3{4 \choose 2}{6 \choose 3} \mod 7 \equiv 2 \mod 7.
\end{align}
Therefore, $|\langle v_H,v_K \rangle_\pi|^2 \neq0$. Indeed, it can be verified, by Proposition \ref{prop-ip}, that
\[|\langle v_H,v_K \rangle_\pi|^2 = \frac{\zeta_{48}^{10}- \zeta_{48}^6- \zeta_{48}^2+10}{150} = \frac{10-\sqrt{2}}{150}.\]
Observe that, modulo $p_1(X)$, we have $X^{10}-X^6-X^2+10=0$ in $\mathbb F_7[X]$, verifying (\ref{eqn-eg1}), and, modulo $p_2(X)$, we have $X^{10}-X^6-X^2+10=6$ in $\mathbb F_7[X]$, verifying (\ref{eqn-eg2}).    

\subsection{}\label{example3}

Thanks to Theorem \ref{thm2}, we know that $\langle v_H,v_K \rangle_\pi = 0$ if $\pi={\rm BC}(\tau)$ with $\varepsilon_\tau = -1$. If $q=p^f$ with $f$ even, then note that $\varepsilon_\pi = \varepsilon_\tau^f = 1$. We could ask whether there are examples of $\pi$ for which $\langle v_H,v_K \rangle_\pi = 0$ with $\varepsilon_\pi = 1$ but $\pi$ is not a base change.

The answer is yes. For instance, for $\pi = {\rm Ps}(\chi^{24},\chi^{-24})$ of of ${\rm PGL}_2(\mathbb F_{289})$, for a character $\chi$ of $\mathbb F_{288}^\times$ as in Section \ref{intro}, using Proposition \ref{prop-ip}, we check that $\langle v_H,v_K \rangle_\pi = 0$. Note that $\varepsilon_\pi = 1$ and that this representation is not a base change from ${\rm PGL}_2(\mathbb F_{17})$.

\subsection{}\label{example4}

It is natural to ask, in light of Theorem \ref{thm1} (2), the following question. If
\[|\langle v_H,v_K \rangle_\pi|^2 \equiv 0 \mod p,\]
for all the identifications of $\overline{\mathbb Q}_p$ with $\mathbb C$ via all the primes $\mathfrak p$ of $\mathbb Z[\zeta_k]$ lying above $p$, where $k = q-1$ (resp. $k= q^2-1$) when $\pi$ is a principal series (resp. cuspidal) representation of ${\rm PGL}_2(\mathbb F_q)$, then is it true that $\langle v_H,v_K \rangle_\pi = 0$?

The answer is no. For instance, for $\pi = {\rm Ps}(\chi^{38},\chi^{-38})$ of ${\rm PGL}_2(\mathbb F_{343})$, for a character $\chi$ of $\mathbb F_{342}^\times$ as in Section \ref{intro}, using Proposition \ref{prop-ip}, we check that
\[|\langle v_H,v_K \rangle_\pi|^2 = \frac{588}{342 \times 344} \equiv 0 \mod 7,\]
under any identification of $\overline{\mathbb Q}_p$ with $\mathbb C$ via $\mathfrak p | 7$ of $\mathbb Z[\zeta_{342}]$.

\section{A direct approach}\label{direct}

A direct approach to proving Theorem \ref{thm1} can be tried for a principal series representation ${\rm Ps}(\chi,\chi^{-1})$ of $G={\rm PGL}_2(\mathbb F_q)$ as it admits a model where the invariant vectors for $H$ and $K$ can be explicitly written down. We can compute $\langle v_H, v_K \rangle_\pi$ in terms of the inducing character $\chi$ but the value of the resulting character sum seems to be difficult to determine.

Let $\pi={\rm Ps}(\chi,\chi^{-1})$. The space of $\pi$ is given by
\[\left\{f: G \rightarrow \mathbb C \mid f\left( \left( \begin{array}{cc} a & b \\ 0 & 1 \end{array}\right) g\right) = \chi(a)f(g) \right\}. \] 
By the Bruhat decomposition, a basis for this space is given by
\begin{equation}
f(g) = \begin{cases} 
1 &\text{if $g=I$,} \\
0 &\text{otherwise,}
\end{cases}
\end{equation}
and, for $\lambda \in \mathbb F_q$,
\begin{equation}\label{eqn-lambda}
f_\lambda\left(  \left( \begin{array}{cc} 0 & 1 \\ 1 & 0 \end{array}\right)  \left( \begin{array}{cc} 1 & \mu \\ 0 & 1 \end{array}\right) \right) = \begin{cases} 
1 &\text{if $\mu=\lambda$,} \\
0 &\text{otherwise.}
\end{cases}
\end{equation}
Then it is a direct check to show that the unit norm vector
\begin{equation}\label{eqn-vh}
v_H = \frac{1}{|H|}\sum_{0 \neq \lambda \in \mathbb F_q} \chi^{-1}(\lambda)f_\lambda
\end{equation}
 is $H$-invariant and the unit norm vector
\begin{equation}\label{eqn-vk}
v_K = \frac{1}{|K|} \left(f+ \sum_{\lambda \in \mathbb F_q}\chi\left(\frac{1}{\alpha-\lambda^2} \right)f_\lambda \right)
\end{equation}
is $K$-invariant. Thus we get the following proposition.
\begin{proposition}\label{prop-ip}
For $v_H$ and $v_K$ as above, we have:
\[\langle v_H, v_K \rangle_\pi = \frac{1}{|H|}\frac{1}{|K|} \sum_{0 \neq \lambda \in \mathbb F_q}\chi(\alpha\lambda^{-1}-\lambda).\]
 \end{proposition}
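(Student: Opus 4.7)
The plan is to compute $\langle v_H, v_K\rangle_\pi$ by expanding it bilinearly in the natural basis $\{f\} \cup \{f_\lambda : \lambda \in \mathbb F_q\}$ of $\pi$ that comes from the Bruhat decomposition.

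First, I would fix the $G$-invariant Hermitian form on $\pi = {\rm Ind}_B^G \chi$. Since $|\chi(a)| = 1$, the sum
\[\langle f_1, f_2\rangle_\pi = \sum_{g \in B \backslash G} f_1(g)\overline{f_2(g)}\]
is well-defined on coset representatives and manifestly $G$-invariant. Using the Bruhat representatives implicit in (\ref{eqn-lambda}), namely the identity together with $\left(\begin{array}{cc}0&1\\1&0\end{array}\right)\left(\begin{array}{cc}1&\mu\\0&1\end{array}\right)$ for $\mu \in \mathbb F_q$, each basis vector $f$ or $f_\lambda$ is the indicator of a single coset representative. Hence $\{f\} \cup \{f_\lambda : \lambda \in \mathbb F_q\}$ is an orthonormal basis with respect to this form.

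Next, I would expand $\langle v_H, v_K\rangle_\pi$ bilinearly using (\ref{eqn-vh}) and (\ref{eqn-vk}). Since $v_H$ has no component along $f$, the $f$-term of $v_K$ contributes nothing, and orthonormality of the $f_\lambda$ collapses the remaining double sum to its diagonal:
\[\langle v_H, v_K\rangle_\pi = \frac{1}{|H|\,|K|}\sum_{0 \neq \lambda \in \mathbb F_q}\chi^{-1}(\lambda)\,\overline{\chi\!\left(\frac{1}{\alpha-\lambda^2}\right)}.\]
Note here that $\alpha - \lambda^2 \neq 0$ for every $\lambda \in \mathbb F_q$, since $\alpha$ is a non-square (precisely what makes $k_\alpha$ generate a non-split torus), so the expression inside $\chi$ is well defined.

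Finally, since $\chi$ takes values in roots of unity, $\overline{\chi(x)} = \chi(x^{-1})$, and the summand simplifies as
\[\chi^{-1}(\lambda)\,\chi(\alpha - \lambda^2) = \chi\!\left(\frac{\alpha-\lambda^2}{\lambda}\right) = \chi(\alpha\lambda^{-1} - \lambda),\]
yielding the stated identity. There is no substantial obstacle in this argument; the only point to be careful about is the orthonormality of $\{f, f_\lambda\}$ with respect to the chosen normalization of the $G$-invariant inner product, which is exactly what determines the prefactors $1/|H|$ and $1/|K|$ in the final formula.
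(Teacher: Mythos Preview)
Your proposal is correct and is exactly the ``direct check'' the paper leaves implicit: once one has the explicit expressions (\ref{eqn-vh}) and (\ref{eqn-vk}) for $v_H$ and $v_K$ in the basis $\{f\}\cup\{f_\lambda\}$, the computation of $\langle v_H,v_K\rangle_\pi$ is precisely the orthonormal expansion you carry out. Your added care about why $\alpha-\lambda^2\neq 0$ and about the normalization of the inner product only makes explicit what the paper takes for granted.
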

 \begin{remark}
 Note that $|\langle v_H, v_K \rangle_\pi|$ depends only on the isomorphism class of $\pi$.
 \end{remark}
 \begin{remark}
 By changing $\lambda$ to $-\lambda$, it follows that $\langle v_H, v_K \rangle_\pi = 0$ if $\chi(-1)=-1$.
 \end{remark}
 
 Suppose $a \in \mathbb F_q^\times$ is such that 
 \[k_{\alpha a^2} = \left(\begin{array}{cc} 1 & \alpha a^2 \\ 1 & 1 \end{array} \right)\] also has order $q+1$ in $G$. Let $K_a$ denote the cyclic subgroup of $G$ generated by $k_{\alpha a^2}$ (cf. Remark \ref{rmk-k}). Define $v_{K_a}$ as above. 
 
 Making the change $\lambda \rightarrow \lambda a$ in Proposition \ref{prop-ip}, we get the following corollary.
 \begin{corollary}\label{coro-ip}
 We have
 \[\langle v_H, v_{K_a} \rangle_\pi = \chi(a) \langle v_H, v_K \rangle_\pi. \]
 \end{corollary}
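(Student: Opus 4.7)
The plan is to apply Proposition \ref{prop-ip} verbatim but with the non-split torus $K$ replaced by $K_a$. Since $K_a$ is defined exactly as $K$ was, but starting from the parameter $\alpha a^2$ in place of $\alpha$, the formulas (\ref{eqn-vk}) and the resulting expression for the inner product carry over with $\alpha$ replaced by $\alpha a^2$. In particular, $|K_a|=|K|=q+1$, and
\[\langle v_H, v_{K_a} \rangle_\pi = \frac{1}{|H|}\frac{1}{|K|} \sum_{0 \neq \lambda \in \mathbb F_q}\chi(\alpha a^2\lambda^{-1}-\lambda).\]

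Next, I would perform the change of variables $\lambda \mapsto a\lambda$ in this character sum. Since $a \in \mathbb F_q^\times$, multiplication by $a$ is a bijection on $\mathbb F_q^\times$, so the sum is unchanged in range. The summand becomes
\[\chi\bigl(\alpha a^2(a\lambda)^{-1}-a\lambda\bigr) = \chi\bigl(a(\alpha\lambda^{-1}-\lambda)\bigr) = \chi(a)\,\chi(\alpha\lambda^{-1}-\lambda),\]
using multiplicativity of $\chi$. Factoring $\chi(a)$ out of the sum and recognising the remaining expression as $\langle v_H, v_K \rangle_\pi$ via Proposition \ref{prop-ip} then yields the claim.

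There is essentially no obstacle here; the only thing to be careful about is the initial step of writing down the correct analogue of Proposition \ref{prop-ip} for $K_a$. One should check that the derivation of (\ref{eqn-vk}) and Proposition \ref{prop-ip} used $\alpha$ only through the defining relation $k_\alpha \in K$, so substituting $\alpha a^2$ throughout produces the $K_a$-invariant vector with the same normalization and yields the stated character sum. After that, the corollary is a one-line substitution.
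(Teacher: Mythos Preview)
Your proof is correct and follows essentially the same approach as the paper: apply Proposition \ref{prop-ip} with $\alpha$ replaced by $\alpha a^2$, then perform the substitution $\lambda \mapsto a\lambda$ and use multiplicativity of $\chi$.
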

 
Now we consider an extension of finite fields of degree $f$. Let $\sigma$ be the Frobenius morphism of $\mathbb F_{q^f}$ given by $\sigma(x)=x^q$ and thus $\sigma$ is of order $f$. Let $\sigma$ act on $g \in G$ component-wise.
Let 
\[K^\sigma = \{k^\sigma \mid k \in K\}.\]   
Note that $K^\sigma = K_a$ for $a=\alpha^{\frac{q-1}{2}}$. In particular, we have (cf. Corollary \ref{coro-ip}):
\begin{equation}\label{eqn-ip}
\langle v_H, v_{K^\sigma} \rangle_\pi = \chi(\alpha^{\frac{q-1}{2}}) \langle v_H, v_K \rangle_\pi.
\end{equation}

\section{Shintani base change}\label{bc}

In this section we discuss the effect of Shintani base change \cite{shi76,kaw77} on the behaviour of $\langle v_H, v_K \rangle$. As in the end of the last section, we consider an extension of finite fields of degree $f$ and $\sigma$ denotes the Frobenius morphism of $\mathbb F_{q^f}$ given by $\sigma(x)=x^q$. It will be convenient to denote $\mathbb F = \mathbb F_{q}$ and $\mathbb E = \mathbb F_{q^f}$ which we do. The main result in this section is that the invariant vectors for $H = H_\mathbb E$ and $K = K_\mathbb E$ are orthogonal for a representation $\pi$ of ${\rm PGL}_2(\mathbb E)$ if $\pi$ is obtained as the base change of a representation $\tau$ of ${\rm PGL}_2(\mathbb F)$ with $\varepsilon_\tau = -1$.

We first recall the Shintani base change \cite{shi76,kaw77}. If $\pi$ is a representation of $G = {\rm PGL}_2(\mathbb E)$, then its $\sigma$-conjugate is defined by 
\[\pi^\sigma(g)=\pi(g^\sigma).\] 
Assume that $\pi$ is a base change of an irreducible representation of ${\rm PGL}_2(\mathbb F)$. Thus, we have $\pi \simeq \pi^\sigma$. There are two cases to consider.
\begin{enumerate}
\item $\chi=\chi^\sigma$, in which case $\pi$ is the base change of a principal series representation of ${\rm PGL}_2(\mathbb F)$.
\item $\chi^{-1}=\chi^\sigma$, in which case $\pi$ is the base change of a cuspidal representation of ${\rm PGL}_2(\mathbb F)$. Note that in this case, we have $\chi=\chi^{\sigma^2}$, and thus $f$ is even and $\chi=\chi^\prime \circ {\rm Nm}_{\mathbb F_{q^f}/\mathbb F_{q^2}}$, for a character $\chi^\prime$ of $\mathbb F_{q^2}^\times$. And $\pi$ is the base change of the cuspidal representation of ${\rm PGL}_2(\mathbb F)$ associated to $\chi^\prime$.
\end{enumerate}

Since $\pi$ is $\sigma$-invariant, there exists an operator $T$ on the space of $\pi$, which is unique up to scalars, such that
\[\pi(g)T=T\pi(g^\sigma)\] for all $g \in G$. Now if $\tau$ is the irreducible representation of ${\rm PGL}_2(\mathbb F)$ that base changes to $\pi$, then we have 
\[{\rm Trace}(\pi(g)T) = \chi_\tau({\rm N}(g)),\] 
where 
\[{\rm N}(g) = gg^\sigma \dots g^{\sigma^{f-1}}.\]
Note that though N$(g)$ need not be in ${\rm PGL}_2(\mathbb F)$, it is conjugate to an element of ${\rm PGL}_2(\mathbb F)$, by Lang's theorem, and thus $\chi_\tau({\rm N}(g))$ is well-defined. 

We do not make direct use of the Shintani character identity and we need it only to rephrase our results in terms of base change.

In the next proposition, we explicitly state the Shintani operator on $\pi={\rm Ps}(\chi,\chi^{-1})$ which is $\sigma$-invariant. 
\begin{proposition}\label{prop-shintani}
We have:
\begin{enumerate}
\item\label{shintani1} When $\chi=\chi^\sigma$, we may take the Shintani operator $T$ on $\pi$ to be 
\begin{align*}
Tf &= f, \\
Tf_\lambda &= f_{\lambda^\sigma}.
\end{align*}
\item When $\chi^{-1}=\chi^\sigma$, write $f=2n$. We may take the Shintani operator $T$ on $\pi$ to be 
\begin{align*}
Tf & = \frac{(-1)^{n-1}}{q^n} \sum_{\mu \in \mathbb E} f_\mu, \\
Tf_\lambda & = \frac{(-1)^{n-1}}{q^n} \left(f+\sum_{\lambda \neq \mu \in \mathbb E} \chi(-1)\chi(\lambda-\mu)^2f_{\mu^\sigma}\right).
\end{align*}
\end{enumerate}
\end{proposition}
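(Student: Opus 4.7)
The operator $T$ is characterized by the intertwining relation $\pi(g)T = T\pi(g^\sigma)$ up to a scalar, so my plan is: (i) exhibit an explicit candidate $T$ in each case, (ii) verify the intertwining on a generating set of $G$ (the Borel $B$ together with the Weyl element $w$ suffice, since these generate $G$), and (iii) pin down the normalizing scalar.

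For part (1), with $\chi = \chi^\sigma$, the natural candidate is the pullback $(TF)(g) = F(g^\sigma)$. I would first check that this preserves the space of $\pi$ (it does, precisely because $\chi = \chi^\sigma$), and then observe that the intertwining is a one-line verification: both $(T\pi(g^\sigma)F)(h)$ and $(\pi(g)TF)(h)$ reduce to $F(h^\sigma g^\sigma)$. Evaluating $T$ on the Bruhat basis is then immediate: the coset of $I$ is $B$, which is $\sigma$-stable, so $Tf = f$; and the big cell $Bwu_\lambda$ is carried under the pullback to $Bwu_{\lambda^\sigma}$, giving $Tf_\lambda = f_{\lambda^\sigma}$.

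For part (2), with $\chi^\sigma = \chi^{-1}$, the pullback $S:F\mapsto F\circ\sigma$ sends ${\rm Ind}(\chi)$ to ${\rm Ind}(\chi^\sigma) = {\rm Ind}(\chi^{-1})$, a different principal series. My plan is to compose $S$ with the standard intertwining operator $M:{\rm Ind}(\chi^{-1})\to{\rm Ind}(\chi)$, which in the finite-field setting takes the form $(Mh)(g) = \sum_{\mu\in\mathbb E} h(wu_\mu g)$ on the Bruhat basis; this $M$ is a nonzero isomorphism because $\chi^2 \neq 1$ on $\mathbb E^\times$. The composition $c\cdot M\circ S$ is then a self-intertwiner of $\pi$ with the $\sigma$-equivariance required of the Shintani operator. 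I would compute $M\circ S$ on the basis $\{f, f_\lambda\}$ using Bruhat decompositions $wu_\nu \cdot g^\sigma = b\cdot wu_{\nu'}$ and the identities $\chi^\sigma = \chi^{-1}$, $\chi^{\sigma^{-1}} = \chi^{-1}$ to simplify the resulting character sums; this will reproduce the formulas displayed in the proposition up to the overall factor $c$. Finally, I would fix $c = (-1)^{n-1}/q^n$ by imposing the Shintani trace identity $\mathrm{Trace}(\pi(g)T) = \chi_\tau(\mathrm N(g))$ at a convenient element — for instance, computing $\mathrm{Trace}(T)$ and matching it with $\chi_\tau(I) = \dim\tau = q-1$ will pin down the scalar and bring in the exponent $n = f/2$.

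The main obstacle will be the explicit bookkeeping in part (2), especially the intertwining on the Weyl element. There one has to expand $\pi(w)f_\mu = \chi(-\mu^2)f_{1/\mu}$ for $\mu \neq 0$, carry out a change of variables $\mu \mapsto \mu^{\sigma^{-1}}$ inside $Tf$, and match the resulting Gauss-sum-like expression with $Tf_0$ via $\chi^{\sigma^{-1}} = \chi^{-1}$; the isolated $f$-term in the formula for $Tf_\lambda$ reflects the degenerate $\mu = \lambda$ case in the Bruhat decomposition of $wu_\mu$ against the relevant element. Once the $w$-identity is in hand, the verification on $B$ and the scalar calculation are largely mechanical.
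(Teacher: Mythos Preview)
Your approach is correct and in fact more illuminating than the paper's. The paper's proof is minimalist: it records how the diagonal, unipotent, and Weyl generators act on the basis $\{f, f_\lambda\}$, and then declares that the intertwining relation $\pi(g)T = T\pi(g^\sigma)$ is a ``direct verification'' using the Bruhat decomposition. No derivation of the formulas is offered, and the normalizing constant $(-1)^{n-1}/q^n$ is not justified in the proof at all; the paper explicitly defers that to the subsequent discussion, where the constant is pinned down by transporting $T$ to the Whittaker model and comparing with the Bessel function (ultimately reducing to a Gauss-sum computation and a parity argument over the $(q+1)$ characters with $\chi^\sigma=\chi^{-1}$).

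Your construction $T = c\cdot M\circ S$ (pullback followed by the standard long intertwining operator) is the natural explanation for where the formula in part (2) comes from, and it makes the intertwining relation automatic rather than a lengthy check. Two remarks, however. First, in carrying out the computation you should be careful with the direction of Frobenius: depending on whether you use $S:h\mapsto h\circ\sigma$ or its inverse, the resulting formula has $f_{\mu^\sigma}$ or $f_{\mu^{\sigma^{-1}}}$, so you will need to match conventions precisely with the stated formula. Second, your proposed normalization via ${\rm Trace}(T)=\dim\tau=q-1$ is legitimate but not obviously easier than the paper's route: it unwinds to a character sum of the type $\sum_{\lambda\notin\mathbb F_q}\chi(\lambda-\lambda^{\sigma^{-1}})^2$, which is of the same flavor as the identities the paper eventually proves (cf.\ Lemmas~\ref{lemma-sum} and~\ref{lemma-sum-2}). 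The paper in fact remarks that it does \emph{not} make direct use of the Shintani trace identity, precisely to avoid this computation; its Whittaker/Bessel argument trades the trace sum for a Gauss sum, whose absolute value is known and whose sign is then fixed by an averaging trick. Either route works; just be aware that your step (iii) is more than what the proposition itself demands, and may cost you a nontrivial character-sum evaluation.
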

\begin{proof}
A direct check shows
\[\left(\begin{array}{cc} a & 0 \\ 0 & 1 \end{array} \right) f  = \chi(a) f,  
\left(\begin{array}{cc} a & 0 \\ 0 & 1 \end{array} \right)f_\lambda  = \chi^{-1}(a)f_{a\lambda},\]
\[\left(\begin{array}{cc} 1 & b \\ 0 & 1 \end{array} \right) f  = f,
\left(\begin{array}{cc} 1 & b \\ 0 & 1 \end{array} \right) f_\lambda = f_{\lambda-b},\]
\[\left(\begin{array}{cc} 0 & 1 \\ 1 & 0 \end{array} \right) f = f_0,
\left(\begin{array}{cc} 0 & 1 \\ 1 & 0 \end{array} \right) f_0 = f,
\left(\begin{array}{cc} 0 & 1 \\ 1 & 0 \end{array} \right) f_\lambda = \chi(-\lambda^2) f_{\lambda^{-1}}, \]
where the very last identity is for $\lambda \neq 0$.
It is a direct verification, using the Bruhat decomposition, that we have
\[\pi(g)T=T\pi(g^\sigma)\] for all $g \in G$, which is what we had to show.
\end{proof}

In Proposition \ref{prop-shintani}, we have normalized the Shintani operator $T$ so that it matches with the standard normalization of the Shintani operator on the Whittaker model $\mathcal W(\pi,\psi)$ of $\pi$ with respect to an additive character $\psi$ of $\mathbb E$. Here, $\psi$ is chosen to be $\sigma$-invariant; i.e., $\psi=\psi^\sigma$. We need to prove this non-obvious claim. 

Recall that the standard Shintani operator on $\mathcal W(\pi,\psi)$ is given by
\[TW=W^\sigma\]
where $W^\sigma(g)=W(g^\sigma)$. Note that $W^\sigma$ naturally belongs to $\mathcal W(\pi^\sigma,\psi^\sigma)$ which equals $\mathcal W(\pi,\psi)$ by our assumptions on $\pi$ and $\psi$ and because of the uniqueness of the Whittaker model.     

As a first step towards proving the claim on the matching of the Shintani operators defined respectively on the space of $\pi$ and on $\mathcal W(\pi,\psi)$ under the isomorphism
\[\pi \simeq \mathcal W(\pi,\psi) \subset {\rm Ind}_N^G \psi,\]   
where $N$ is the unipotent radical of the Borel subgroup of $G={\rm PGL}_2(\mathbb F)$, we show that the normalizing constant is correct up to a sign.

To this end, recall that for $f \in \pi$, the corresponding Whittaker function in $\mathcal W(\pi,\psi)$  is given by
\[W_f(g) = \sum_{\mu \in \mathbb F} \psi^{-1}(\mu) f\left(\left( \begin{array}{cc} 0 & 1 \\ 1 & 0 \end{array} \right) \left( \begin{array}{cc} 1 & \mu \\ 0 & 1 \end{array} \right) g \right).\] Also, note that for the Whittaker function
\[W = \frac{1}{|N|} \sum_{\lambda \in \mathbb F} \psi(\lambda)W_{f_\lambda},\]
where the $f_\lambda$'s are as in (\ref{eqn-lambda}), we have
\[W(1)=1.\]

Now let $S$ denote the intertwining operator $T$ on $\pi$ given in Proposition \ref{prop-shintani} (2) except for the normalizing constant $(-1)^{n-1}/q^n$. We need to show that 
\[\frac{1}{|N|}\sum_{\lambda \in \mathbb F} \psi(\lambda)W_{Sf_\lambda}(1) = \pm q^n,\] so that $T$ given in Proposition \ref{prop-shintani} (2) is indeed the normalized Shintani operator up to a sign.   

To prove this identity, note that
\begin{align*}
W_{Sf_\lambda}(1) &= \sum_{\mu \in \mathbb F} \psi^{-1}(\mu) Sf_\lambda \left(\left( \begin{array}{cc} 0 & 1 \\ 1 & 0 \end{array} \right) \left( \begin{array}{cc} 1 & \mu \\ 0 & 1 \end{array} \right) \right) \\
&=  \sum_{\mu \in \mathbb F} \psi^{-1}(\mu)  \left[f+\sum_{x \neq \lambda} \chi(-1)\chi(x-\lambda)^2 f_{x^\sigma} \right] \left(\left( \begin{array}{cc} 0 & 1 \\ 1 & 0 \end{array} \right) \left( \begin{array}{cc} 1 & \mu \\ 0 & 1 \end{array} \right) \right) \\
&= \sum_{x \neq \lambda} \psi^{-1}(x)\chi(x-\lambda)^2,
\end{align*}
since $\psi=\psi^\sigma$ and $\chi(-1)=1$. Therefore,
\begin{align*}
\frac{1}{|N|}\sum_{\lambda \in \mathbb F} \psi(\lambda)W_{Sf_\lambda}(1)  &= \frac{1}{|N|}\sum_{\lambda \in \mathbb F} \sum_{x \neq \lambda} \psi(\lambda-x)\chi(x-\lambda)^2 \\
&= \frac{1}{|N|}\sum_{\lambda \in \mathbb F} \left( \sum_{t \neq 0} \psi(t)\chi(t)^2\right) \\
&= G(\chi^2,\psi),
\end{align*}
the Gauss sum of $\chi^2$ with respect to $\psi$.

From the well-known properties of the Gauss sum, as $\chi^{-1}=\chi^\sigma$ and $\psi=\psi^\sigma$, we get
\[G(\chi^2,\psi)^2 = G(\chi^2,\psi)G(\chi^{-2\sigma},\psi) =  G(\chi^2,\psi)G(\chi^{-2},\psi) = \chi^2(-1)q^{2n}=q^{2n},\]
and it follows that
\[G(\chi^2,\psi) = \pm q^n. \] 

\begin{remark}\label{rmk-bessel}
Indeed, we claim that the correct sign is $(-1)^{n-1}$. This will be a consequence of our analysis of the action of $T$ on the $H$-fixed vector.
\end{remark}

In the next lemma, we write down an explicit $H$-fixed vector on the Whittaker model $\mathcal W(\pi,\psi)$ in terms of the Bessel function \cite{mao01}. The normalized Bessel function is given by 
\[B_{\pi,\psi}(g) = \frac{1}{|N|} \sum_{n \in N} \psi^{-1}(n)\chi_\pi(gn),\]
where $\chi_\pi$ is the character of $\pi$. Note that $B_{\pi,\psi}(1)=1$. Recall that the normalized Bessel function is the unique Whittaker function which is bi-$\psi$-invariant. It follows that, when $\pi \simeq \pi^\sigma$, 
\begin{equation}\label{eqn-bessel}
TB_{\pi,\psi} = B_{\pi,\psi}^\sigma = B_{\pi,\psi}.
\end{equation}

\begin{lemma}\label{lemma-bessel}
The unique non-trivial $H$-fixed vector, up to a scalar multiple, in $\mathcal W(\pi,\psi)$ is given by 
\[W_H = \sum_{h \in H}\pi(h)B_{\pi,\psi}.\]
\end{lemma}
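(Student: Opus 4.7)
The plan is threefold: show that $W_H$ is $H$-fixed, invoke the standing multiplicity-one hypothesis to conclude that $\mathcal W(\pi,\psi)^H$ is at most one-dimensional, and verify that $W_H$ is non-zero. The first two are essentially immediate, and the substance of the argument lies in the non-vanishing, for which I would evaluate $W_H$ at the identity element.

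For $H$-invariance, since $\pi$ acts by right translation in the Whittaker model, for any $h_0 \in H$ one simply re-indexes the sum: $\pi(h_0) W_H = \sum_{h \in H} \pi(h_0 h) B_{\pi,\psi} = W_H$. The multiplicity-one claim is the standing hypothesis used throughout the paper (cf.\ Section \ref{pgl2}) for the irreducible representations of ${\rm PGL}_2(\mathbb E)$ under consideration. Hence once $W_H$ is shown to be non-zero, it will automatically be the unique $H$-fixed vector up to scalar multiples.

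The key computational step is to evaluate $W_H(1)$. Using the character formula defining $B_{\pi,\psi}$ and swapping the order of summation,
\[
W_H(1) = \sum_{h \in H} B_{\pi,\psi}(h) = \frac{1}{|N|} \sum_{h \in H} \sum_{n \in N} \psi^{-1}(n) \chi_\pi(hn).
\]
The crucial observation I would exploit is a conjugacy-class analysis: for every $h \neq I$ in $H$ with non-trivial diagonal entry $a \neq 1$, the element $hn$ has the two distinct eigenvalues $a$ and $1$ for every $n \in N$, so $hn$ is $G$-conjugate to $h$ itself. Therefore $\chi_\pi(hn) = \chi_\pi(h)$ independently of $n$, and the inner sum factors as $\chi_\pi(h) \sum_{n \in N} \psi^{-1}(n) = 0$ by non-triviality of $\psi$. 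Only the term $h = I$ survives, contributing
\[
\frac{1}{|N|} \sum_{n \in N} \psi^{-1}(n) \chi_\pi(n) \;=\; B_{\pi,\psi}(1) \;=\; 1.
\]
Hence $W_H(1) = 1$, in particular $W_H \neq 0$, which closes the plan.

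The main obstacle to watch for is the conjugacy step; but once one recognizes that for $h \neq I$ the split semisimple class of $hn$ is determined entirely by $h$ and not at all by $n$, the $\psi$-twisted averaging over $N$ immediately annihilates all but the identity contribution, and the normalization $B_{\pi,\psi}(1) = 1$ of the Bessel function does the rest.
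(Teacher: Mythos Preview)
Your proof is correct and follows the same approach as the paper: both establish non-triviality by computing $W_H(1) = \sum_{h \in H} B_{\pi,\psi}(h) = B_{\pi,\psi}(1) = 1$. The paper cites Gelfand's lemma \cite[Proposition 4.9]{gel70} for the vanishing of $B_{\pi,\psi}(h)$ at $h \neq 1$, while you supply the direct conjugacy argument (for $h \neq 1$, $hn$ is conjugate to $h$ for every $n \in N$, so the $\psi$-average over $N$ kills the term) that underlies it.
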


\begin{proof}
As it is obviously $H$-invariant, we only need to check that it is indeed non-trivial. Note that
\[W_H(1)= \sum_{h \in H} B_{\pi,\psi}(h) = B_{\pi,\psi}(1)=1, \]
as it can be seen that $B_{\pi,\psi}(h)=0$ for $h \neq 1$ (an instance of Gelfand's lemma \cite[Proposition 4.9]{gel70}).
\end{proof}

\begin{corollary}\label{coro-bessel}
The standard Shintani operator $T$ on $\mathcal W(\pi,\psi)$ fixes any $H$-fixed vector.
\end{corollary}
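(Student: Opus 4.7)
The plan is to leverage two facts already in place: the Bessel function $B_{\pi,\psi}$ is $T$-invariant by (\ref{eqn-bessel}), and the defining intertwining property $\pi(g)T = T\pi(g^\sigma)$ forces $T$ to commute with $\pi(g)$ whenever $g$ is $\sigma$-fixed. Since $H$ sits inside $\mathrm{PGL}_2(\mathbb F)$ and $\sigma$ acts as the identity on $\mathbb F$, every $h \in H$ satisfies $h^\sigma = h$, and therefore $T\pi(h) = \pi(h) T$ for all $h \in H$. In particular, $T$ preserves the space of $H$-fixed vectors, which by Lemma \ref{lemma-bessel} is one dimensional.

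It now suffices to verify $TW_H = W_H$ on the explicit generator produced in Lemma \ref{lemma-bessel}, namely $W_H = \sum_{h \in H} \pi(h) B_{\pi,\psi}$. Pushing $T$ through the sum using the commutation just observed and then invoking (\ref{eqn-bessel}),
\[TW_H = \sum_{h \in H} T\pi(h) B_{\pi,\psi} = \sum_{h \in H} \pi(h) T B_{\pi,\psi} = \sum_{h \in H} \pi(h) B_{\pi,\psi} = W_H,\]
so $T$ acts as the identity on the $H$-fixed line, which is the claim.

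There is no substantive obstacle; the whole corollary is a formal consequence of the $\sigma$-invariance of $H$ combined with (\ref{eqn-bessel}). As a by-product, this argument will pin down the sign promised in Remark \ref{rmk-bessel}: once $TW_H = W_H$ is known to hold for the standard operator $W \mapsto W^\sigma$, transferring $W_H$ back to the induced model and computing $TW_H$ directly from the formula for $T$ in Proposition \ref{prop-shintani} will force the overall sign in the normalizing constant $\pm(-1)^{n-1}/q^n$ to be $+1$, i.e.\ the constant to be $(-1)^{n-1}/q^n$ as stated.
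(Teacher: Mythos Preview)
Your argument contains a factual slip that invalidates the middle step. In this section $H=H_{\mathbb E}$ is the split torus of $\mathrm{PGL}_2(\mathbb E)$, consisting of $\mathrm{diag}(a,1)$ with $a\in\mathbb E^\times$; it is \emph{not} contained in $\mathrm{PGL}_2(\mathbb F)$. For a generic $a\in\mathbb E^\times$ one has $a^\sigma\neq a$, so the assertion $h^\sigma=h$ for all $h\in H$ is false, and hence the commutation $T\pi(h)=\pi(h)T$ you invoke does not hold. Consequently the second equality in your displayed chain is unjustified as written.

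The repair is minor and is precisely the paper's argument: from the intertwining relation $\pi(g)T=T\pi(g^\sigma)$ one obtains $T\pi(h)=\pi(h')T$ with $h'\in H$ (namely $h'=h^{\sigma^{-1}}$), and since $\sigma$ restricts to a bijection of $H$ one may reindex the sum to get
\[
TW_H=\sum_{h\in H}T\pi(h)B_{\pi,\psi}=\sum_{h\in H}\pi(h')TB_{\pi,\psi}=\sum_{h\in H}\pi(h')B_{\pi,\psi}=\sum_{h\in H}\pi(h)B_{\pi,\psi}=W_H.
\]
So the correct reason the sum is unchanged is that $\sigma$ \emph{permutes} $H$, not that it fixes $H$ pointwise. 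Your concluding remark about using this to pin down the sign in Remark~\ref{rmk-bessel} is in line with the paper.
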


\begin{proof}
We have
\[TW_H = \sum_{h \in H} T\pi(h)B_{\pi,\psi} =  \sum_{h \in H} \pi(h^\sigma)TB_{\pi,\psi} =  \sum_{h \in H} \pi(h^\sigma)B_{\pi,\psi} =  \sum_{h \in H} \pi(h)B_{\pi,\psi} = W_H,\]
by (\ref{eqn-bessel}).
\end{proof}

Next we look at the effect of the Shintani operator $T$, defined on the space of $\pi$ (cf. Proposition \ref{prop-shintani}), on the $H$ and $K$-invariant vectors.

\begin{proposition}\label{prop-effect}
Let $\pi = {\rm Ps}(\chi,\chi^{-1})$ be a $\sigma$-invariant representation of ${\rm PGL}_2(\mathbb E)$ so that $\pi={\rm BC}(\tau)$, where $\tau$ is an irreducible representation of ${\rm PGL}_2(\mathbb F)$. 
\begin{enumerate}
\item\label{ip-1} If $\chi=\chi^\sigma$, in which case $\tau$ is a principal series representation, we have
\[Tv_H = v_H ~\&~ Tv_K = v_{K^\sigma}.\]
\item\label{ip-2} If $\chi^{-1}=\chi^\sigma$, in which case $\tau$ is a cuspidal representation, we have
\[Tv_H = v_H ~\&~ Tv_K = -\chi(\alpha^{-1}) v_{K^\sigma}.\]
\end{enumerate}
\end{proposition}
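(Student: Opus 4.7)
The plan is a direct computation in each of the two cases, expanding both sides in the Bruhat basis via the explicit formulas of Proposition \ref{prop-shintani} together with \eqref{eqn-vh} and \eqref{eqn-vk}, and matching coefficients. \emph{Case (1)} ($\chi=\chi^\sigma$) is essentially bookkeeping: the substitution $\mu=\lambda^\sigma$ combined with $\chi^{\sigma^{-1}}=\chi$ immediately yields $Tv_H=v_H$, and applied to $Tv_K$ it converts the coefficient $\chi((\alpha-\mu^{2\sigma^{-1}})^{-1})$ of $f_\mu$ into $\chi((\alpha^\sigma-\mu^2)^{-1})$ via the field identity $(\alpha^\sigma-\mu^2)^{\sigma^{-1}}=\alpha-\mu^{2\sigma^{-1}}$. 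This is exactly the formula defining $v_{K^\sigma}$, since $K^\sigma=K_{\alpha^\sigma}$ (equivalently $K_a$ for $a=\alpha^{(q-1)/2}$, as recorded in Section \ref{direct}).

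\emph{Case (2)} ($\chi^{-1}=\chi^\sigma$, $f=2n$) requires genuine character-sum work because $T$ itself now carries a sum. For $Tv_H=v_H$: since $H^\sigma=H$, the intertwining $\pi(g)T=T\pi(g^\sigma)$ forces $Tv_H\in\pi^H$, hence $Tv_H=c\,v_H$ for a scalar $c$. By Corollary \ref{coro-bessel}, the standard Shintani operator $W\mapsto W^\sigma$ on $\mathcal W(\pi,\psi)$ fixes every $H$-invariant Whittaker function, and the computation preceding Remark \ref{rmk-bessel} identifies our $T$ with $\frac{(-1)^{n-1}}{q^n}G(\chi^2,\psi)$ times that standard operator; thus $c=\frac{(-1)^{n-1}}{q^n}G(\chi^2,\psi)$. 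The identity $G(\chi^2,\psi)=(-1)^{n-1}q^n$ (which is precisely the sign claimed in Remark \ref{rmk-bessel}) follows from the Hasse--Davenport relation applied to the factorization $\chi=\chi'\circ N_{\mathbb E/\mathbb F_{q^2}}$, valid because $\chi^{\sigma^2}=\chi$. This forces $c=1$.

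For $Tv_K=-\chi(\alpha^{-1})v_{K^\sigma}$: expand $|K|\cdot Tv_K$ in the Bruhat basis via Proposition \ref{prop-shintani} and extract the coefficient of $f$, which is $\frac{(-1)^{n-1}}{q^n}\sum_{\lambda\in\mathbb E}\chi^{-1}(\alpha-\lambda^2)$, while the coefficient of $f$ in $|K|\cdot(-\chi(\alpha^{-1}))v_{K^\sigma}$ is $-\chi(\alpha^{-1})$. The coefficient comparison for each $f_\nu$ is analogous and reduces, after the substitution $t=\lambda/\nu^{\sigma^{-1}}$, to the same Jacobi-sum type evaluation. Thus the whole claim boils down to the character-sum identity
\[
\sum_{\lambda\in\mathbb E}\chi^{-1}(\alpha-\lambda^2)=(-1)^n q^n\chi^{-1}(\alpha),
\]
which we verify by factoring $\alpha-\lambda^2=(\gamma-\lambda)(\gamma+\lambda)$ in $\mathbb E_2=\mathbb E(\gamma)$ with $\gamma^2=\alpha$, rewriting the sum as $\chi(-1)\sum_{y\in\gamma+\mathbb E}(\chi\circ N_{\mathbb E_2/\mathbb E})^{-1}(y)$, parametrizing the coset $\gamma+\mathbb E$ by the $\mathbb E_2/\mathbb E$-trace, and invoking Hasse--Davenport together with the standard quadratic-Gauss-sum evaluation.

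The principal obstacle is the careful sign tracking behind the two Gauss-sum evaluations $G(\chi^2,\psi)=(-1)^{n-1}q^n$ and $\sum_\lambda\chi^{-1}(\alpha-\lambda^2)=(-1)^n q^n\chi^{-1}(\alpha)$ in Case (2); once these are in hand, the remainder of the proof is routine bookkeeping.
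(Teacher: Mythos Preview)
Your treatment of Case (1) matches the paper's: both are the immediate substitution $\lambda\mapsto\lambda^\sigma$ together with $\chi=\chi^\sigma$.

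In Case (2) your overall architecture also agrees with the paper's: both arguments use the Whittaker model (Corollary \ref{coro-bessel}) to pin down $Tv_H$ up to sign, and both reduce $Tv_K$ to a scalar multiple of $v_{K^\sigma}$ whose absolute value is forced. The genuine divergence is in how the \emph{signs} are fixed. The paper never evaluates an individual Gauss sum; instead it sums the identity in question over all $q+1$ characters $\chi$ with $\chi^\sigma=\chi^{-1}$ and reads off the common sign from orthogonality. This trick works uniformly for both $Tv_H$ and $Tv_K$, and it simultaneously proves that the sign is independent of $\chi$. The paper also exploits the $K^\sigma$-invariance of $Tv_K$ to avoid checking each $f_\nu$-coefficient, reducing to a single coefficient together with one summed relation.

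Your route via Hasse--Davenport is viable but, as written, has a gap. Hasse--Davenport applied to $\chi^2=(\chi')^2\circ N_{\mathbb E/\mathbb F_{q^2}}$ gives $-G(\chi^2,\psi)=(-G((\chi')^2,\psi''))^n$, so everything hinges on showing $G((\chi')^2,\psi'')=q$ over $\mathbb F_{q^2}$. That is not a consequence of Hasse--Davenport; it is the $n=1$ instance of the very identity you are trying to prove. It \emph{can} be established directly---for any nontrivial $\theta$ on $\mathbb F_{q^2}^\times$ trivial on $\mathbb F_q^\times$ one computes $G(\theta,\psi'')=q\,\theta(x_0)$ with $x_0^q=-x_0$, and then $\theta(x_0)=+1$ precisely when $\theta$ is a square in that character group, which $(\chi')^2$ is---but you have not supplied this step. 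Similarly, your sketch for $\sum_\lambda\chi^{-1}(\alpha-\lambda^2)$ is too compressed: the coset $\gamma+\mathbb E$ is characterised by $y-y^\tau=2\gamma$, not by the trace, and reducing the resulting sum to a closed form with the correct sign requires the same kind of base-case Gauss-sum evaluation as above (or, equivalently, a Jacobi-sum identity $J(\eta,\chi^{-1})=(-1)^{n-1}q^n$ whose sign again does not fall out of Hasse--Davenport alone). The paper's averaging argument sidesteps all of this.
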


\begin{proof}
See the definitions of $v_H$ and $v_K$ given by (\ref{eqn-vh}) and (\ref{eqn-vk}). Similarly, $v_K^\sigma$ is defined by (\ref{eqn-vk}) except that $\alpha$ is replaced by $\alpha^\sigma$ there. 

Now the proof of (\ref{ip-1}) is immediate by Proposition \ref{prop-shintani} (\ref{shintani1}). Indeed, apply $T$, make the change $\lambda \rightarrow \lambda^\sigma$ and use the fact that we have $\chi=\chi^\sigma$.

Next we take up (\ref{ip-2}). Recall that in this case we have $\mathbb E = \mathbb F_{q^f}$ where $f=2n$. Note that
\begin{align*}
Tv_H &= \frac{(-1)^{n-1}}{q^n}\frac{1}{|H|}  \sum_{\lambda \neq 0} \chi^{-1}(\lambda) \left( f+ \sum_{\mu \neq \lambda} \chi(-1)\chi(\lambda-\mu)^2f_{\mu^\sigma}\right) \\
&= \frac{(-1)^{n-1}}{q^n}\frac{1}{|H|}   \sum_{\lambda \neq 0} \sum_{\mu \neq \lambda} \chi(-1)\chi(\lambda-2\mu+\mu^2 \lambda^{-1})f_{\mu^\sigma} \\
&=  \frac{(-1)^{n-1}}{q^n}\frac{1}{|H|}   \sum_{\lambda \neq 0,1} \sum_{\mu \neq 0} \chi(-1)\chi(\mu)\chi(\lambda-2+\lambda^{-1})f_{\mu^\sigma} \\
&=  \frac{(-1)^{n-1}}{q^n}\frac{1}{|H|}   \sum_{\lambda \neq 0,1} \sum_{\mu \neq 0} \chi(-1)\chi^{-1}(\mu)\chi(\lambda-2+\lambda^{-1})f_{\mu} 
\end{align*}
by making the change $\lambda \rightarrow \lambda\mu$ and then $\mu \rightarrow \mu^\sigma$ and observing that $\chi^{-1}=\chi^\sigma$. Thus, $Tv_H=v_H$ is equivalent to showing that
\[\sum_{0, 1 \neq \lambda \in \mathbb E}\chi(\lambda-2+\lambda^{-1}) = (-1)^{n-1}q^n,\]
since $\chi(-1)=1$. 

But we already know that the operator $T$ on the space of $\pi$ matches with the operator $T$ on $\mathcal W(\pi,\psi)$, under the identification of $\pi$ with $\mathcal W(\pi,\psi)$, up to a sign (cf. Remark \ref{rmk-bessel}). Thus, it follows from Corollary \ref{coro-bessel}, and the computation above, that
\begin{equation}\label{eqn-sign}
\sum_{0, 1 \neq \lambda \in \mathbb E}\chi(\lambda-2+\lambda^{-1}) = \pm q^n,
\end{equation}
where we now additionally need to show that the sign is indeed $(-1)^{n-1}$. The analysis above shows that the sign is independent of the character $\chi$. In order to fix this sign, we sum the identities in (\ref{eqn-sign}) over the $(q+1)$ many characters $\chi$ of $\mathbb E^\times$ such that $\chi^{-1}=\chi^\sigma$. Note that, by orthogonality relations,
\[\sum_{\chi^{-1}=\chi^\sigma} \sum_{0, 1 \neq \lambda \in \mathbb E}\chi(\lambda-2+\lambda^{-1}) =
 \sum_{0, 1 \neq \lambda \in \mathbb E} \left( \sum_{\chi^{-1}=\chi^\sigma} \chi(\lambda-2+\lambda^{-1})\right)  \]
is a multiple of $(q+1)$. On the other hand the double sum on the left hand side is
\[(q^{2n}-2)+(-1)+(q+1-2) (\cdot) q^n = q^{2n}-3+(q-1)(\cdot)q^n\]  
by summing over the trivial character, the quadratic character, and the rest $(q-1)$ many characters. Now this expression is a multiple of $(q+1)$ implies that
\[(\cdot) = (-1)^{n-1}.\]  

Now we show that $Tv_K = -\chi^{-1}(\alpha)v_{K^\sigma}$. To this end, note that
\begin{align*}
Tv_K &= \frac{1}{|K|} \left[Tf+\sum_\lambda \chi\left(\frac{1}{\alpha-\lambda^2} \right)Tf_\lambda \right] \\
&=  \frac{(-1)^{n-1}}{q^n}\frac{1}{|K|}  \left[\sum_\mu f_\mu +\sum_\lambda \chi \left(\frac{1}{\alpha-\lambda^2} \right) \left( f+\sum_{\mu \neq \lambda} \chi(-1)\chi(\lambda-\mu)^2f_{\mu^\sigma} \right) \right] \\
&=  \frac{(-1)^{n-1}}{q^n}\frac{1}{|K|} \left[ \left( \sum_\lambda \chi \left(\frac{1}{\alpha-\lambda^2}\right)\right) f+  \sum_\mu \left(1+\sum_{\lambda \neq \mu} \chi(-1) \chi \left(\frac{(\lambda-\mu)^2}{\alpha-\lambda^2} \right) \right) f_{\mu^\sigma} \right].
\end{align*}
Now observe that $Tv_K$ and $v_{K^\sigma}$ differ by a scalar since
\[\pi(k^\sigma)Tv_K = T\pi(k)v_K = Tv_K\]
which implies that $Tv_K$ is $K^\sigma$-invariant. We write
\[Tv_K = -\chi^{-1}(\alpha)t \cdot v_{K^\sigma}.\]
Therefore we get 
\begin{equation}\label{eqn-9}
\sum_\lambda \chi\left(\frac{1}{\alpha-\lambda^2} \right) = -(-1)^{n-1}q^n\chi^{-1}(\alpha)t,
\end{equation}
and
\begin{equation}\label{eqn-10}
1+\sum_{\lambda \neq \mu} \chi(-1) \chi \left(\frac{(\lambda-\mu)^2}{\alpha-\lambda^2} \right)   =  -(-1)^{n-1}q^n \chi^{-1}(\alpha)t ~ \chi\left(\frac{1}{\alpha^\sigma -\mu^{2\sigma}} \right).
\end{equation}
Note that (\ref{eqn-9}) is equivalent to 
\begin{equation}\label{eqn-11}
\sum_{i=1}^{q^{2n}-1} \chi^{-1}(1-\alpha^{2i-1}) = -1+(-1)^n q^n t.
\end{equation}
We sum (\ref{eqn-10}) over $\mu \neq 0$ and get
\begin{equation}\label{eqn-12}
q^{2n}-1-\sum_{\lambda \neq 0} \chi\left(\frac{\lambda^2}{\alpha-\lambda^2} \right) = (-1)^nq^n\chi^{-1}(\alpha)t \sum_{\mu \neq 0}\chi\left(\frac{1}{\alpha^\sigma-\mu^{2\sigma}}\right),
\end{equation}
since $\chi(-1)=1$. Note that
\[\sum_{\lambda \neq 0} \chi\left(\frac{\lambda^2}{\alpha-\lambda^2} \right) = \sum_{i=1}^{q^{2n}-1} \chi^{-1}(1-\alpha^{2i-1}) \]
and
\[\sum_{\mu \neq 0}\chi\left(\frac{1}{\alpha^\sigma-\mu^{2\sigma}}\right) = \chi(\alpha) \sum_{i=1}^{q^{2n}-1} \chi^{-1}(1-\alpha^{2i-1}), \]
since $\chi^{-1}=\chi^\sigma$. Substituting in (\ref{eqn-12}), we see that $t=\pm 1$. Summing (\ref{eqn-11}) over the $(q+1)$ many characters $\chi$ such that $\chi^{-1}=\chi^\sigma$, and applying orthogonality relations, we conclude that $t=1$ for each such $\chi$ and this is what we needed to prove.
\end{proof}

The following two lemmas on character sums over finite fields are corollaries to the proof of Proposition \ref{prop-effect} (\ref{ip-2}). Lemma \ref{lemma-sum} is related to $Tv_H=v_H$ and Lemma \ref{lemma-sum-2} is related to $Tv_K=-\chi^{-1}(\alpha)v_{K^\sigma}$.

\begin{lemma}\label{lemma-sum}
Let $\chi$ be a character of $\mathbb F_{q^{2n}}^\times$ such that $\chi^{-1}=\chi^\sigma$ where $\sigma(x)=x^q$. Then we have:
\begin{equation*}\label{eqn-identity}
\sum_{0, 1 \neq \lambda \in \mathbb F_{q^{2n}}}\chi(\lambda-2+\lambda^{-1}) = (-1)^{n-1}q^n.
\end{equation*}
\end{lemma}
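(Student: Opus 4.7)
The plan is to follow the analysis already carried out in the proof of Proposition \ref{prop-effect}(\ref{ip-2}), from which this identity essentially drops out. First, the algebraic identity $\lambda - 2 + \lambda^{-1} = \lambda^{-1}(\lambda-1)^2$ rewrites the summand as $\chi^{-1}(\lambda)\chi^2(\lambda-1)$. The hypothesis $\chi^{-1} = \chi^\sigma$ forces $\chi^{q+1} = 1$, and combined with $(-1)^{q+1} = 1$ (as $q$ is odd) gives $\chi(-1) = 1$, so $\chi^2(\lambda-1) = \chi^2(1-\lambda)$ and the sum takes the Jacobi-sum form $J(\chi^{-1}, \chi^2)$.

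Next I would convert this to a Gauss sum. Fix a non-trivial $\sigma$-invariant additive character $\psi$ of $\mathbb F_{q^{2n}}$, for example $\psi = \psi_0 \circ \mathrm{Tr}_{\mathbb F_{q^{2n}}/\mathbb F_p}$. For $\chi$ non-trivial and non-quadratic the standard identity $J(\alpha,\beta) = G(\alpha,\psi)G(\beta,\psi)/G(\alpha\beta,\psi)$ applies with $\alpha = \chi^{-1}$, $\beta = \chi^2$. The substitution $x \mapsto x^q$ together with $\psi = \psi^\sigma$ yields $G(\chi^\sigma,\psi) = G(\chi,\psi)$, so combined with $\chi^{-1} = \chi^\sigma$ the Jacobi sum simplifies to $G(\chi^2,\psi)$. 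The general identity $G(\alpha,\psi) G(\alpha^{-1},\psi) = \alpha(-1) q^{2n}$ applied with $\alpha = \chi^2$ (using the inherited relations $(\chi^2)^{-1} = (\chi^2)^\sigma$ and $\chi^2(-1) = 1$) then forces $G(\chi^2,\psi)^2 = q^{2n}$, so the sum equals $\pm q^n$.

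The main obstacle is pinning down this sign as $(-1)^{n-1}$ uniformly in $\chi$. My plan is to invoke the matching of intertwiners from the proof of Proposition \ref{prop-effect}(\ref{ip-2}): after clearing the normalizing constant $(-1)^{n-1}/q^n$, the claim is logically equivalent to the assertion $Tv_H = v_H$ for the Shintani operator $T$ of Proposition \ref{prop-shintani}(2). On the Whittaker-model side, the canonical Shintani operator $W \mapsto W^\sigma$ fixes $W_H$ by Corollary \ref{coro-bessel}, uniformly in $\chi$. Since the space of intertwiners between $\pi$ and $\pi^\sigma \simeq \pi$ is one dimensional, these two operators differ by a scalar; the Gauss-sum magnitude $|G(\chi^2,\psi)| = q^n$ established above constrains this scalar to $\{+1,-1\}$, and a direct comparison of the action on a specific vector (for instance $f$) pins it down to $+1$. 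This yields the sign $(-1)^{n-1}$ for every admissible $\chi$ and completes the proof.
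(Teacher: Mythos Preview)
Your reduction to the Gauss sum $G(\chi^2,\psi)$ via the Jacobi-sum identity is clean and correct (for $\chi$ neither trivial nor quadratic, which is the relevant range since $\pi$ must be irreducible), and it reaches $\pm q^n$ more directly than the paper's Whittaker-model computation.  The gap is in your final step.  The scalar $c$ relating the Shintani operator $T$ of Proposition \ref{prop-shintani}(2) to the canonical operator $W\mapsto W^\sigma$ on $\mathcal W(\pi,\psi)$ is \emph{exactly} $(-1)^{n-1}q^n/G(\chi^2,\psi)$: this is what the paper computes just before Remark \ref{rmk-bessel}.  So asserting $c=+1$ is literally the statement $G(\chi^2,\psi)=(-1)^{n-1}q^n$ that you are trying to prove.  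A ``direct comparison on $f$'' does not break the circle: for instance, evaluating both $\Phi(T f)$ and $(W_f)^\sigma$ at the Weyl element $w$ gives $\frac{(-1)^{n-1}}{q^n}G(\chi^2,\psi)$ and $1$ respectively, which simply reproduces the unknown.  Any specific-vector comparison will do the same, since the unknown sign is a genuine scalar ambiguity between two nonzero intertwiners.

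The paper resolves the sign by a different device: it observes that the sign is independent of $\chi$ (both operators are canonical), then sums the identity $\sum_{\lambda\neq 0,1}\chi(\lambda-2+\lambda^{-1})=\pm q^n$ over all $q+1$ characters with $\chi^{-1}=\chi^\sigma$.  The contributions of the trivial and quadratic characters are $q^{2n}-2$ and $-1$, computed directly, and the requirement that the total be divisible by $q+1$ forces the remaining sign to be $(-1)^{n-1}$.  You need either this averaging trick or an independent determination of the sign of $G(\chi^2,\psi)$ (e.g.\ via a Hasse--Davenport relation, which would have to be made explicit) to close the argument.
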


\begin{lemma}\label{lemma-sum-2}
Let $\chi$ be a character of $\mathbb F_{q^{2n}}^\times$ such that $\chi^{-1}=\chi^\sigma$ where $\sigma(x)=x^q$. Let $\alpha$ be a generator of $\mathbb F_{q^{2n}}^\times$. Then we have:
\[\sum_{i=1}^{q^{2n}-1} \chi(1-\alpha^{2i-1}) = -1+(-1)^n q^n.\]
\end{lemma}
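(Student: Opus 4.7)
The plan is to derive Lemma \ref{lemma-sum-2} as a direct corollary of equation (\ref{eqn-11}) in the proof of Proposition \ref{prop-effect} (\ref{ip-2}), once the value $t=1$ established there is combined with the symmetry $\chi \leftrightarrow \chi^{-1}$.

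First I would recall that, in analyzing the action $Tv_K = -\chi^{-1}(\alpha)\, v_{K^\sigma}$, the paper derives
\[\sum_{i=1}^{q^{2n}-1} \chi^{-1}(1-\alpha^{2i-1}) = -1 + (-1)^n q^n\, t\]
for any character $\chi$ of $\mathbb F_{q^{2n}}^\times$ with $\chi^{-1}=\chi^\sigma$. The scalar $t$ is then pinned down to be $1$, uniformly in such $\chi$, by first extracting $t = \pm 1$ from equation (\ref{eqn-12}) and then summing (\ref{eqn-11}) over the $(q+1)$ characters satisfying $\chi^{-1}=\chi^\sigma$ and invoking the orthogonality relations on $\mathbb F_{q^{2n}}^\times$. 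This gives
\[\sum_{i=1}^{q^{2n}-1} \chi^{-1}(1-\alpha^{2i-1}) = -1 + (-1)^n q^n\]
for each such $\chi$.

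Second, I would observe that the hypothesis $\chi^{-1}=\chi^\sigma$ is symmetric under the involution $\chi \mapsto \chi^{-1}$: inverting both sides of $\chi^{-1}=\chi^\sigma$ yields $\chi = (\chi^{-1})^\sigma$, i.e., $\chi^{-1}$ also satisfies the same relation. Therefore, applying the displayed identity above with $\chi$ replaced by $\chi^{-1}$ produces
\[\sum_{i=1}^{q^{2n}-1} \chi(1-\alpha^{2i-1}) = -1 + (-1)^n q^n,\]
which is the statement of Lemma \ref{lemma-sum-2}. Since all the substantive work has already been carried out in the proof of Proposition \ref{prop-effect} (\ref{ip-2})---where the nontrivial input is the identification of the intertwining operator given on the induced model of $\pi$ with the normalized Shintani operator on the Whittaker model $\mathcal W(\pi,\psi)$ via Corollary \ref{coro-bessel}---no genuinely new obstacle arises. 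The only thing one must check at this stage, and it is immediate, is precisely the involutive symmetry of the defining relation $\chi^{-1}=\chi^\sigma$.
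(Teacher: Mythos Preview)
Your proposal is correct and follows the same route the paper indicates: Lemma \ref{lemma-sum-2} is stated there as a corollary to the proof of Proposition \ref{prop-effect} (\ref{ip-2}), obtained from equation (\ref{eqn-11}) once $t=1$ has been established. Your only addition is to make explicit the passage from $\chi^{-1}$ in (\ref{eqn-11}) to $\chi$ in the lemma via the symmetry of the condition $\chi^{-1}=\chi^\sigma$, which is exactly the trivial relabeling needed.
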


Now we state the main result of this section.

\begin{theorem}\label{thm-bc}
Let $\mathbb E/\mathbb F$ be a finite extension of finite fields. Let $\pi={\rm Ps}(\chi,\chi^{-1})$ be an irreducible principal series representation of ${\rm PGL}_2(\mathbb E)$ which is the base change of an irreducible representation $\tau$ of ${\rm PGL}_2(\mathbb F)$. Let $H$ (resp. $K$) be a split (resp. non-split) torus of ${\rm PGL}_2(E)$. Let $v_H$ (resp. $v_K$) be the standard $H$-invariant (resp. $K$-invariant) unit vector in $\pi$ (cf. \S \ref{direct}). Then, we have
\[\langle v_H, v_K \rangle_\pi = \varepsilon_\tau \langle v_H, v_K \rangle_\pi, \]
where $\varepsilon_\tau$ is the sign associated to the representation $\tau$ (cf. Proposition \ref{prop-agree}).
In particular, $v_H$ and $v_K$ are orthogonal if $\varepsilon_\tau=-1$.
\end{theorem}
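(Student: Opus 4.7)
The plan is to exploit the Shintani intertwining operator $T$ on the space of $\pi$ (cf.\ Proposition \ref{prop-shintani}) to produce a scalar self-identity for $\langle v_H, v_K \rangle_\pi$. The key inputs are Proposition \ref{prop-effect}, which pins down the action of $T$ on $v_H$ and $v_K$ explicitly, together with equation (\ref{eqn-ip}), which relates $\langle v_H, v_{K^\sigma} \rangle_\pi$ to $\langle v_H, v_K \rangle_\pi$.

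First I would observe that the specific $T$ of Proposition \ref{prop-shintani} is unitary. Since $\pi \simeq \pi^\sigma$ is irreducible, $T^*T$ is a self-intertwiner of $\pi$ and hence a positive scalar $\lambda$ by Schur's lemma; then $\|Tv_H\|^2 = \lambda$, and since Proposition \ref{prop-effect} yields $Tv_H = v_H$ with $\|v_H\| = 1$, we must have $\lambda = 1$. Applying $T$ to both arguments of the inner product and then feeding in Proposition \ref{prop-effect} and (\ref{eqn-ip}) gives
\[\langle v_H, v_K \rangle_\pi = \langle Tv_H, Tv_K \rangle_\pi = \bar c\, \langle v_H, v_{K^\sigma}\rangle_\pi = \bar c\, \chi(\alpha^{(q-1)/2})\, \langle v_H, v_K \rangle_\pi,\]
where $c = 1$ when $\chi = \chi^\sigma$ and $c = -\chi(\alpha^{-1})$ when $\chi^{-1} = \chi^\sigma$. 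It then remains to identify the scalar $\bar c\, \chi(\alpha^{(q-1)/2})$ with $\varepsilon_\tau$ in each case.

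For this I would use the fact that $\alpha$ is a non-square in $\mathbb E^\times$, since $k_\alpha$ has order $q^f+1$ and so its eigenvalues $1 \pm \sqrt\alpha$ cannot lie in $\mathbb E$; in particular $\alpha^{(q^f-1)/2} = -1$. In the principal series case, $\chi = \chi' \circ {\rm Nm}_{\mathbb E/\mathbb F}$ with $\tau = {\rm Ps}(\chi', \chi'^{-1})$, and
\[\chi(\alpha^{(q-1)/2}) = \chi'\bigl(\alpha^{(q^f-1)/2}\bigr) = \chi'(-1) = \varepsilon_\tau\]
by Proposition \ref{prop-agree}. In the cuspidal case, $\chi = \chi' \circ {\rm Nm}_{\mathbb E/\mathbb F_{q^2}}$ with $\tau = \pi(\chi')$, and the scalar simplifies to $-\chi(\alpha^{(q+1)/2}) = -\chi'(y)$, where $y := \alpha^{(q^f-1)/(2(q-1))}$. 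Now $y^2 = {\rm Nm}_{\mathbb E/\mathbb F}(\alpha) \in \mathbb F^\times$, and $(y^2)^{(q-1)/2} = \alpha^{(q^f-1)/2} = -1$, so $y^2$ is a non-square in $\mathbb F^\times$; setting $\delta = y^2$, we have $y = \sqrt\delta$, and Proposition \ref{prop-agree} gives $-\chi'(\sqrt\delta) = \varepsilon_\tau$.

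The main obstacle is the proof of Proposition \ref{prop-effect}(2) itself, i.e.\ the identification $Tv_K = -\chi(\alpha^{-1})\, v_{K^\sigma}$ in the cuspidal case. Both the sign and the precise scalar are delicate: fixing them requires matching the normalization of $T$ from Proposition \ref{prop-shintani} with the standard Shintani operator on the Whittaker model (where the Bessel function is automatically $T$-fixed, so any $H$-fixed vector inherits $T$-invariance by averaging over $H$), and then extracting the correct constant from a Gauss sum computation. Once Proposition \ref{prop-effect} is available, the chain of identities above reduces Theorem \ref{thm-bc} to the elementary finite-field norm computation sketched in the previous paragraph.
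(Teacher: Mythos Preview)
Your proposal is correct and follows essentially the same route as the paper: apply Proposition~\ref{prop-effect} and equation~(\ref{eqn-ip}) to produce the scalar self-identity, then identify the scalar with $\varepsilon_\tau$ via the explicit norm computations. The only difference is that you establish $T$ is unitary (via Schur's lemma and $Tv_H=v_H$) whereas the paper asserts $T$ is self-adjoint; both yield $\langle v_H,v_K\rangle_\pi=\langle v_H,Tv_K\rangle_\pi$, and your unitarity argument is in fact the cleaner justification.
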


\begin{proof}
Let $T$ be the normalized Shintani operator on the space of $\pi$ defined in Proposition \ref{prop-shintani}. It is easy to verify that $T$ is self-adjoint. By (\ref{eqn-ip}), we have
\[\langle v_H, v_{K^\sigma} \rangle_\pi = \chi(\alpha^{\frac{q-1}{2}}) \langle v_H, v_K \rangle_\pi.\] 

By Proposition \ref{prop-effect}, by making use of the self-adjointness of $T$, we conclude that 
\[\langle v_H, v_K \rangle_\pi = \chi(\alpha^{\frac{q-1}{2}}) \langle v_H, v_K \rangle_\pi\]
when $\chi=\chi^\sigma$ so that $\chi=\chi_{_0}\circ {\rm Nm}_{\mathbb E/\mathbb F}$ and $\tau = {\rm Ps}(\chi_{_0},\chi_{_0}^{-1})$, for a character $\chi_{_0}$ of $\mathbb F^\times$, and
\[\langle v_H, v_K \rangle_\pi = -\chi(\alpha^{\frac{q+1}{2}}) \langle v_H, v_K \rangle_\pi\] 
when $\chi^{-1}=\chi^\sigma$ so that $\chi=\chi^\prime \circ {\rm Nm}_{\mathbb E/\mathbb F^\prime}$, where $\mathbb F^\prime$ is the quadratic extension of $\mathbb F$ and $\tau=r(\chi^\prime)$ is the cuspidal representation of ${\rm PGL}_2(\mathbb F)$ associated to $\chi^\prime$.

Let $\mathbb E/\mathbb F$ be of degree $f$. When $\tau={\rm Ps}(\chi_{_0},\chi_{_0}^{-1})$, note that
\[\chi(\alpha^{\frac{q-1}{2}}) = \chi_{_0}(\alpha^{\frac{q-1}{2}}\alpha^{\frac{q^2-q}{2}} \dots \alpha^{\frac{q^f-q^{f-1}}{2}}) = \chi_{_0}(-1) = \varepsilon_\tau.\]
When $\tau=r(\chi^\prime)$, we know that $f=2n$. Now note that
\[-\chi(\alpha^{\frac{q+1}{2}})= -\chi^\prime(\alpha^{\frac{q+1}{2}} \alpha^{\frac{q^3+q^2}{2}} \dots \alpha^{\frac{q^{f-1}+q^{f-2}}{2}}) = \varepsilon_\tau\] as 
\[\left(\alpha^{\frac{q^{f-1}+q^{f-2}+\dots+q+1}{2}}\right)^2=\alpha^{\frac{q^f-1}{q-1}} \in \mathbb F^\times \smallsetminus \mathbb F^{\times 2}.\]
\end{proof}

\begin{remark}
A similar statement as in Theorem \ref{thm-bc} holds true for the twisted Steinberg representation ${\rm St}\otimes \eta$ as well, where $\eta$ is the quadratic character of $\mathbb E^\times$. The above analysis can be done on the reducible principal series ${\rm Ps}(\chi,\chi)$ to derive this.
\end{remark}

\section{$L$-values and $\epsilon$-factors}\label{l-values}

The study of global toric periods is connected to an $L$-value \cite{wal85,wal91} and the corresponding local case involves a certain $\epsilon$-factor \cite{tun83,sai93,pra96}. In this section, we give an analogy in which we compare the vanishing or non-vanishing of $\langle v_H, v_K \rangle_\pi$ with that of an $L$-value. Just as the sign of the global root number gives some but not all the information about the vanishing or non-vanishing of the $L$-value, we see that the sign that we introduced holds analogous information about the vanishing or non-vanishing of $\langle v_H, v_K \rangle_\pi$. A negative sign is sufficient for vanishing, however it does not capture all the vanishing, and moreover the vanishing can be for subtle reasons!

The role that the sign $\varepsilon_\pi$ in Proposition \ref{prop-agree} plays in the vanishing of $\langle v_H, v_K \rangle_\pi$ is akin to the role that the global root number plays in the vanishing of an automorphic $L$-function. Indeed, for a cuspidal automorphic representation $\pi$ of ${\rm PGL}_2(\mathbb A_E)$ which is a base change of a cuspidal automorphic representation $\tau$ of of ${\rm PGL}_2(\mathbb A_F)$, where $E/F$ is a quadratic extension of number fields, we have the factorizations 
\[L(s,\pi) = L(s,\tau)L(s,\tau\otimes \omega)\]
and
\[ \epsilon(s,\pi) =  \epsilon(s,\tau)  \epsilon(s,\tau \otimes \omega)\]
where $\omega$ is the quadratic character associated to $E/F$. We also have the functional equation for any cuspidal representation $\rho$ given by
\[L(s,\rho) = \epsilon(s,\rho)L(1-s,\rho).\]

Thanks to the functional equation we conclude that $L(1/2,\pi)=0$ if $\epsilon(1/2,\pi)=-1$. This is similar in analogy to Part (1) of Theorem \ref{thm1}. But $L(1/2,\pi)=0$ can happen with $\epsilon(1/2,\pi)=1$ too. In particular, this situation arises in the context of base change as $\epsilon(1/2,\pi)=1$ with $\epsilon(1/2,\tau)=\epsilon(1/2,\tau \otimes \omega)=-1$. In this case, $L(1/2,\tau)=0$ and this forces $L(1/2,\pi)=0$. This is analogous to Theorem \ref{thm2} where the negative sign of the base changing representation forces the invariant vectors in the base changed representation to be orthogonal though the sign of the base changed representation can be positive. 

Finally, to prolong the analogy further, we note the crucial role played by mod $p$ representation theory in our proofs, reminiscent of questions regarding the vanishing and non-vanishing of automorphic $L$-values modulo $p$. 
 
\section*{Acknowledgements} 

The first named author would like to thank Dipendra Prasad for suggesting this problem and for many valuable conversations which were of great help in this work. The authors would also like to thank him for many helpful suggestions on this manuscript. The authors would like to thank Vinayak Vatsal for several fruitful suggestions and in particular for stressing the point in Remark \ref{rmk-vatsal} which has helped in improving the efficacy of Theorem \ref{thm1} (2), compared to the statement in an earlier version of the manuscript. They also thank Ravi Raghunathan for his helpful comments on this paper.


\end{document}